\newcommand{\T}{\mathscr{T}}
\DeclareMathAlphabet{\mathpzc}{OT1}{pzc}{m}{it}
\newcommand{\EO}[1]{{\color{black}#1}}
\newcommand{\EOP}[1]{{\color{black}#1}}
\begin{document}

\title{Semilinear optimal control with Dirac measures}
\shorttitle{Semilinear optimal control with Dirac measures}

\author{
{\sc Enrique Ot\'arola}
\thanks{enrique.otarola@usm.cl}
\\[2pt]
Departamento de Matem\'atica,
\\
Universidad T\'ecnica Federico Santa Mar\'ia, Valpara\'iso, Chile}
\shortauthorlist{E. Ot\'arola}

\maketitle

\begin{abstract}
{The purpose of this work is to study an optimal control problem for a semilinear elliptic partial differential equation \EO{with a linear combination of} Dirac measures \EO{as a forcing term}; the control variable corresponds to the amplitude of \EO{such singular sources}. We analyze the existence of optimal solutions and derive first and, necessary and sufficient, second order optimality conditions. We develop a solution technique that discretizes the state and adjoint equations with continuous piecewise linear finite elements; the control variable is already discrete. We analyze the convergence properties of discretizations and obtain, \EO{in two dimensions,} an a priori error estimate for the underlying approximation of an optimal control variable.}
{\EO{optimal control; semilinear equations; Dirac measures; first order optimality conditions; second order optimality conditions; finite element approximations; convergence; a priori error estimates.}}
\end{abstract}

\section{Introduction}\label{sec:introduction}
In this work we are \EO{concerned} with the analysis and discretization of an optimal control problem for a semilinear elliptic partial differential equation (PDE) \EO{with a linear combination of Dirac measures as a forcing term}. To make matters precise, we let \EO{$\Omega\subset\mathbb{R}^d$, with $d\in \{2, 3\}$,} be an open, bounded, and convex polytope with boundary $\partial\Omega$ and $\mathcal{D}$ be a finite ordered subset of $\Omega$ with cardinality $\ell:= \#\mathcal{D}<\infty$. Given a desired state $\mathsf{y}_d \in L^2(\Omega)$, a regularization parameter $\alpha>0$, and the cost functional
\begin{equation}\label{def:cost_func}
J(\mathsf{y},\mathbf{u}):=\frac{1}{2} \| \mathsf{y} - \mathsf{y}_d \|^2_{L^2(\Omega)}+ \frac{\alpha}{2} \| \mathbf{u} \|_{\mathbb{R}^{\ell}}^2,
\end{equation} 
we shall be concerned with the following PDE-constrained optimization problem: Find $\min J(\mathsf{y},\mathbf{u})$ subject to the \emph{monotone, semilinear,} and \emph{elliptic} PDE
\begin{equation}\label{def:state_eq}
-\Delta \mathsf{y} + a(\cdot,\mathsf{y})  =  \sum_{\mathsf{z} \in \mathcal{D}} \mathsf{u}_{\mathsf{z}} \delta_{\mathsf{z}}  \text{ in }  \Omega, 
\qquad
\mathsf{y}  =  0  \text{ on }  \partial\Omega,
\end{equation}
where $\delta_{\mathsf{z}}$ corresponds to the Dirac delta supported at the interior point $\mathsf{z} \in \mathcal{D}$, and 
\begin{equation}\label{def:box_constraints}
\mathbf{u} = \{\mathsf{u}_{\mathsf{z}} \}_{\mathsf{z} \in \mathcal{D}} \in \mathbb{R}^{\ell}: 
\quad 
\mathsf{a}_{\mathsf{z}} \leq \mathsf{u}_{\mathsf{z}} \leq \mathsf{b}_{\mathsf{z}}~\forall \mathsf{z} \in \mathcal{D}.  
\end{equation}
Here, $\mathbf{u}$ denotes the control variable. The control bounds $\mathbf{a} = \{\mathsf{a}_{\mathsf{z}} \}_{\mathsf{z} \in \mathcal{D}}$ and $\mathbf{b} = \{\mathsf{b}_{\mathsf{z}} \}_{\mathsf{z} \in \mathcal{D}}$ both belong to $\mathbb{R}^{\ell}$ and satisfy that $\mathsf{a}_{\mathsf{z}} < \mathsf{b}_{\mathsf{z}}$ for \EO{every} $\mathsf{z} \in \mathcal{D}$. Assumptions on the nonlinear function $a$ will be deferred until Section \ref{sec:assumption}.

\EO{PDE-constrained optimization problems involving measures have been previously} considered in a number of works. In particular, we mention the work by \cite{MR3162396}, in which the authors consider an optimal control problem where the state variable is governed by the semilinear elliptic equation \eqref{def:state_eq} but with a control variable $u$ that is measure \EO{valued, i.e., $u \in \mathcal{M}(\omega)$}, where $\omega \subset \Omega$. The main motivation behind this consideration is what the authors call \emph{sparsity promoting properties} of the control variable. In this framework, a complete analysis is provided for the state equation and the optimization problem. In particular, first and second order optimality conditions are derived. However, the work by \cite{MR3162396} is not concerned with approximation.

\EO{For the special situation of the linear case $a \equiv 0$}, there are a few works that consider the \EO{finite element discretization} of the optimal control problem previously introduced. In \cite{MR3225501}, the authors consider a discretization scheme based on the variational discretization approach and perform an a priori error analysis \cite[\EO{Theorem 3.7, item (i)}]{MR3225501}. The authors operate under the fact that the optimal state belongs to $W^{1,r}_0(\Omega)$ \EO{for every $r \in (1, d/(d-1))$.} In contrast, an approach using Muckenhoupt weights, weighted Sobolev spaces, and the corresponding weighted norm inequalities has been explored in \cite{MR3800041}. In such a work, the authors obtain the following convergence rates for the error approximation of the optimal control variable: $\mathcal{O}(h^{2-\epsilon})$ in two dimensions and $\mathcal{O}(h^{1-\epsilon})$ in three dimensions, where $\epsilon > 0$ is arbitrarily small \cite[Theorem 5.1]{MR3800041}; the error estimate in two dimensions being improved in Theorem \ref{thm:error_estimate_control_final} below to $\mathcal{O}(h^{2}|\log h|^3)$. The extension of such weighted techniques for the case where the state equations are the Stokes equations has been considered in \cite{FOQ}. Related optimal control problems within the context of the active control of sound and vibrations have been studied in \cite{MR2086168} and \cite{MR2525606}, respectively. Finally, we refer to the references \cite{MR2974716}, \cite{MR3072225}, and \cite{MR3945081}, where the authors study discretization techniques for \EO{an optimal control} problem without control constraints, but where the control is a regular Borel measure.

\EO{Apart from the fact that} this exposition is the first to study a numerical scheme for the semilinear optimal control problem $\min J(\mathsf{y},\mathbf{u})$ subject to \eqref{def:state_eq} and \eqref{def:box_constraints}, the analysis itself comes with its own set of difficulties. \EO{Below we list what we consider to be the most important contributions of our work:}
\begin{itemize}
\item[(i)] \emph{Error estimates for semilinear PDEs with \EO{Borel} measures:} For a basic finite element discretization of \EO{\eqref{eq:semilinear_PDE}}, we obtain a $L^2$-error estimate that is optimal in terms of regularity (Theorem \ref{thm:eq:error_estimate_L2_semilinear}). We also derive a nearly--optimal, in terms of approximation, $L^1$-error estimate (Theorem \ref{thm:L1_error_estimate}).

\item[(ii)] \emph{\EO{Existence and optimality conditions}:} \EO{We show the existence of at least one global solution to our optimal control problem; see Theorem \ref{thm:existence_optimal_pair}. Moreover, we analyze first and, necessary and sufficient, second order optimality conditions in Sections \ref{sec:first_order_o_c} and \ref{sec:second_order_o_c}, respectively.}

\item[(iii)] \emph{Convergence of discretizations:} \EO{We prove the existence of subsequences of global solutions of suitable discrete problems that converge to a global solution of the continuous optimal control problem (Theorem \ref{thm:convergence_control}). We also prove that continuous strict local solutions can be approximated by local minima of the aforementioned discrete problems (Theorem \ref{thm:convergence_control_2}).}

\item[(iv)] \emph{Error estimates:} \EO{We derive in Theorem \ref{thm:error_estimate_control_final} an error estimate for the underlying approximation of an optimal control variable in two dimensions that is nearly--optimal in terms of approximation.}
\end{itemize}

\EO{We organize our presentation as follows}. \EO{In Section \ref{sec:nota_and_assum} we present notation and gather some facts that shall be useful for our purposes. In particular, we briefly review well-posedness results for semilinear elliptic PDEs with singular forcing. Section \ref{sec:finite_element_approximation} is our first original contribution. We obtain a $L ^2$-error estimate for a standard finite element approximation of the aforementioned semilinear elliptic PDEs.} Section \ref{sec:semilinear_optimal_control} is dedicated to the analysis of the semilinear optimal control problem. In particular, we derive first and, necessary and sufficient, second order optimality conditions. \EO{Finally, in Section \ref{sec:fem_ocp}, we devise and analyze a suitable finite element discretization scheme for our semilinear optimal control problem: we derive convergence results and obtain, in two dimensions, a priori error estimates.}


\section{Notation, assumptions, and preliminaries}
\label{sec:nota_and_assum}

Let us fix notation and the setting in which we will operate. Throughout this work, $d \in \{2,3\}$ and $\Omega$ is an open, bounded, and convex polytopal domain in $\mathbb{R}^d$. We denote by $\partial \Omega$ the boundary of $\Omega$. 

If $\mathscr{X}$ and $\mathscr{Y}$ are Banach function spaces, we write $\mathscr{X}\hookrightarrow \mathscr{Y}$ to denote that $\mathscr{X}$ is continuously embedded in $\mathscr{Y}$. We denote by $\mathscr{X}'$ and $\| \cdot \|_{\mathscr{X}}$ the dual and the norm of $\mathscr{X}$, respectively. We denote by $\langle \cdot, \cdot \rangle_{\mathscr{X}', \mathscr{X}}$ the duality paring between $\mathscr{X}'$ and $\mathscr{X}$; we shall simply denote $\langle \cdot, \cdot \rangle$ whenever the underlying spaces are clear from the context. Let \EO{$\{ x_n \}_{n \in \mathbb{N}}$} be a sequence in $\mathscr{X}$. We will denote by $x_n \rightarrow x$ and $x_n \rightharpoonup x$ the  strong and weak convergence, respectively, of $\{ x_n \}_{n \in \mathbb{N}}$ to $x$. We will use standard notation for Sobolev spaces, norms, and seminorms. We denote by $\mathcal{M}(\Omega)$ the space of Radon measures on $\Omega$ -- the space of regular Borel measures $\mu$ which are such that $\mu(K) < \infty$ for every compact set $K \subset \EO{\mathbb{R}^d}$ \cite[\EO{page 5}]{MR1158660} -- and recall that $\mathcal{M}(\Omega)$ can be identified with $C_0(\Omega)'$ -- the dual of the space of continuous functions \EO{in} $\bar \Omega$ vanishing on $\partial \Omega$ \cite[Theorem 7.17]{MR1681462}. 

Given $s \in (1,\infty)$, we denote by $s'$ its H\"older conjugate, i.e., the real number such that $1/s + 1/s' = 1$. The relation $A \lesssim B$ indicates that $A \leq C B$, with a positive constant \EO{that does not depend on $A$, $B$, or} the underlying discretization parameters. The value of $C$ might change at each occurrence.


\subsection{Assumptions}\label{sec:assumption}

\EO{We will operate under the following assumptions on $a$; see \cite[Section 2]{MR3162396}.} We must, however, immediately mention that some of the results obtained in this work are valid under less restrictive requirements; when possible we explicitly mention the assumptions on the nonlinear term $a$ that are required for a particular result. 

\begin{enumerate}[label=(A.\arabic*)]
\item \label{A1} $a  = a(x,y):\Omega \times \mathbb{R} \rightarrow \mathbb{R}$ is a Carath\'eodory function that is monotone increasing in $y$ for a.e.~$x$ in $\Omega$ and satisfies the growth condition
\begin{equation}
\label{eq:growing_a}
|a(x,y)| \leq |\phi_0(x)| + C_a |y|^r
\quad
\mathrm{a.e.}~x \in \Omega,~\forall y \in \mathbb{R}.
\end{equation} 
Here, $C_a >0$ is a constant, $\phi_0 \in L^1(\Omega)$, $r < \infty$ if $d=2$, and $r<3$ if $d=3$.
\item \label{A2} $a = a(x,y):\Omega \times \mathbb{R} \rightarrow \mathbb{R}$ is a Carath\'eodory function of class $C^1$ with respect to $y$ for a.e.~$x$ in $\Omega$ and there exists $\phi_1 \in L^{q}(\Omega)$, with $q>d/2$, such that
\begin{equation}
\label{eq:growing_a_derivative}
0 \leq \tfrac{\partial a}{\partial y}(x,y) \leq |\phi_1(x)| + C_a |y|^r
\quad
\mathrm{a.e.}~x \in \Omega,~\forall y \in \mathbb{R}.
\end{equation} 
Here, $C_a > 0$ is a constant, $r < \infty$ if $d=2$, and $r<2$ if $d=3$.
%
%
%
%
\item \label{A3} $a = a(x,y):\Omega \times \mathbb{R} \rightarrow \mathbb{R}$ is a Carath\'eodory function of class $C^{2}$ with respect to $y$ for a.e.~$x$ in $\Omega$ and there exists $\phi_2 \in L^{t}(\Omega)$ such that
\begin{equation}
\label{eq:growing_a_second_derivative}
\left| \tfrac{\partial^2 a}{\partial y^2}(x,y) \right| \leq |\phi_2(x)| + C_a |y|^r
\quad
\mathrm{a.e.}~ x \in \Omega,~\forall y \in \mathbb{R},
\end{equation} 
where $t>1$ if $d=2$ and $t>3$ if $d=3$, $C_a > 0$ is a constant, $r < \infty$ if $d=2$, and $r<1$ if $d=3$.
\end{enumerate}

\EO{The following comments are now in order. We first notice that our assumptions \ref{A1}, \ref{A2}, and \ref{A3} are those stated in (A1), (A2), and (A3), respectively, in the work by \cite{MR3162396}. Second, if \ref{A2} holds and $a(\cdot,0) \in L^1(\Omega)$, then \ref{A1} is satisfied. Third, if \ref{A3} holds, $\partial a/\partial y(\cdot,0) \in L^{q}(\Omega)$, for some $q>d/2$, and $\partial a/\partial y(x,y) \geq 0$ for a.e.~$x$ in $\Omega$ and $y \in \mathbb{R}$, then \ref{A2} holds.}

Further assumptions on $a$ that will be particularly needed for \EO{performing an a priori error analysis for a suitable finite element discretization} will be deferred until Section \ref{sec:fem_ocp}.


\section{\EO{Semilinear PDEs with singular forcing}}
\label{sec:state_equation}
\EO{Let $\Omega$ be an open and bounded domain with Lipschitz boundary, $\mu \in \mathcal{M}(\Omega)$, and $a$ be such that \ref{A1} holds. Let us introduce the following semilinear elliptic PDE with singular forcing}: Find $y$ such that
\begin{equation}
- \Delta y + a(\cdot,y) = 
\EO{\mu}
\text{ in }  \Omega, \qquad
y =  0  \text{ on }  \partial\Omega.
\label{eq:semilinear_PDE}
\end{equation}
The following notion of weak solution follows from \cite[Section 2]{MR3162396}: $y \in L^1(\Omega)$ is a weak solution for problem \eqref{eq:semilinear_PDE} if $a(\cdot,y) \in L^1(\Omega)$ and
\begin{equation}
\int_{\Omega} \left[ - y \Delta v + a(x,y) v \right] \mathrm{d}x = 
\EO{\int_{\Omega} v \mathrm{d}\mu}
\qquad \forall v \in \mathcal{Z}(\Omega).
\label{eq:weak_semilinear_PDE}
\end{equation}
Here, $\mathcal{Z}(\Omega) := \{ v \in H_0^1(\Omega): \Delta v \in C(\bar \Omega) \}$. We immediately note that $\mathcal{Z}(\Omega) \subset C_0 (\Omega)$ and observe that this \EO{property and} the definition of $\mathcal{Z}(\Omega)$ imply that all the terms involved in \eqref{eq:weak_semilinear_PDE} are well defined.

The following result states the well-posedness of \EO{\eqref{eq:weak_semilinear_PDE}} and further regularity properties for the solution $y$; \EO{see \cite[Theorem 3]{MR1025884} and} \cite[Theorem 2.1]{MR3162396}.

\begin{theorem}[well-posedness]
\EO{There exists a unique solution to \eqref{eq:weak_semilinear_PDE}. In addition, $y \in W_0^{1,p}(\Omega)$ and}
\begin{equation}\label{eq:stability_state}
\| \nabla y \|_{L^p(\Omega)}
\lesssim
\EO{\| \mu \|_{\mathcal{M}(\Omega)}}
+
\|a(\cdot,0)\|_{L^{1}(\Omega)} 
\end{equation}
\EO{for every $p < d/(d-1)$.} The hidden constant is independent of $y$, $a$, \EO{and $\mu$.}
\label{thm:well_posedness_semilinear}
\end{theorem}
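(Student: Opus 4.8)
The plan is to combine the linear elliptic theory for the Laplacian with data in $\mathcal{M}(\Omega)$ with the monotonicity of the semilinear term through a regularization-and-passage-to-the-limit argument. Since the solution concept \eqref{eq:weak_semilinear_PDE} is a transposition (very weak) formulation, the analysis rests on the $W^{1,p}$-regularity available for $-\Delta$ with measure data, which is precisely what makes the range $p < d/(d-1)$ natural.

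First I would establish the uniform \emph{a priori} estimate. Approximate $\mu$ by a sequence $\{\mu_n\} \subset L^\infty(\Omega)$ with $\mu_n \to \mu$ weakly-$*$ in $\mathcal{M}(\Omega)$ and $\|\mu_n\|_{L^1(\Omega)} \le \|\mu\|_{\mathcal{M}(\Omega)}$ (mollification or truncation). For each $n$, classical results for monotone semilinear problems with regular data yield a unique $y_n \in H_0^1(\Omega)$ solving $-\Delta y_n + a(\cdot, y_n) = \mu_n$, with $a(\cdot, y_n) \in L^1(\Omega)$; here \ref{A1} supplies the boundedness, hemicontinuity, and coercivity needed, while monotonicity of $a$ in $y$ gives monotonicity of the operator. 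Testing with the truncation $T_k(y_n)$ and using that $(a(x, y_n) - a(x, 0))\, T_k(y_n) \ge 0$ by monotonicity, together with $|T_k| \le k$, I obtain $\int_{\{|y_n| \le k\}} |\nabla y_n|^2 \,\mathrm{d}x \le k\,(\|\mu\|_{\mathcal{M}(\Omega)} + \|a(\cdot, 0)\|_{L^1(\Omega)})$. The Boccardo--Gallou\"et distribution-function argument then upgrades this level-set energy bound into $\|\nabla y_n\|_{L^p(\Omega)} \lesssim \|\mu\|_{\mathcal{M}(\Omega)} + \|a(\cdot, 0)\|_{L^1(\Omega)}$ for every $p < d/(d-1)$, uniformly in $n$.

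Next I would pass to the limit. Reflexivity of $W_0^{1,p}(\Omega)$ and the compact embedding $W_0^{1,p}(\Omega) \hookrightarrow \hookrightarrow L^1(\Omega)$ furnish a subsequence with $y_n \rightharpoonup y$ in $W_0^{1,p}(\Omega)$ and $y_n \to y$ a.e.\ in $\Omega$, and the stability bound \eqref{eq:stability_state} then follows by weak lower semicontinuity of the norm. To identify the nonlinear term I would prove $a(\cdot, y_n) \to a(\cdot, y)$ strongly in $L^1(\Omega)$ by Vitali's theorem: a.e.\ convergence is already available, and equi-integrability of $\{a(\cdot, y_n)\}$ follows from a separate truncation estimate bounding $\int_{\{|y_n| > k\}} |a(\cdot, y_n)| \,\mathrm{d}x$ uniformly in $n$. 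Passing to the limit in \eqref{eq:weak_semilinear_PDE} shows that $y$ is a weak solution. For uniqueness I would exploit the transposition structure directly: if $y_1, y_2$ both solve \eqref{eq:weak_semilinear_PDE}, their difference satisfies $\int_\Omega -(y_1 - y_2)\,\Delta v \,\mathrm{d}x = -\int_\Omega (a(\cdot, y_1) - a(\cdot, y_2))\, v \,\mathrm{d}x$ for all $v \in \mathcal{Z}(\Omega)$; choosing $v$ as the solution of $-\Delta v = \operatorname{sign}(a(\cdot, y_1) - a(\cdot, y_2))$ and invoking the sign condition $(a(\cdot, y_1) - a(\cdot, y_2))(y_1 - y_2) \ge 0$ forces $a(\cdot,y_1) = a(\cdot,y_2)$ and hence $y_1 = y_2$.

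The main obstacle I anticipate is the interplay between the uniform \emph{a priori} estimate and the equi-integrability required at the limit. The Boccardo--Gallou\"et argument that converts the energy bound on truncations into a genuine $W^{1,p}$ bound is the technical core, and controlling the nonlinear term on the set $\{|y_n| > k\}$ — so as to secure strong $L^1$ convergence and correctly identify $a(\cdot, y)$ — is where the monotonicity and the growth hypothesis \ref{A1} must be used most delicately. These are precisely the points supplied by \cite[Theorem 3]{MR1025884} and \cite[Theorem 2.1]{MR3162396}, on which the argument ultimately relies.
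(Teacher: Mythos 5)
Note first that the paper does not prove Theorem \ref{thm:well_posedness_semilinear} at all: it is quoted from the literature (\cite[Theorem 3]{MR1025884} and \cite[Theorem 2.1]{MR3162396}). Your reconstruction of the existence part follows the same route as those references and is essentially sound: regularization of $\mu$, testing with truncations $T_k$ and using monotonicity to get the level-set energy bound, the Boccardo--Gallou\"et argument for the uniform $W_0^{1,p}$ bound with $p<d/(d-1)$, and Vitali to identify $a(\cdot,y)$ in the limit. One caveat there: for genuinely singular $\mu$ (e.g.\ Dirac masses, the case of interest here) the equi-integrability of $\{a(\cdot,y_n)\}$ cannot be closed by a truncation estimate of the form $\int_{\{|y_n|>k\}}|a(\cdot,y_n)|\,\mathrm{d}x \lesssim \int_{\{|y_n|>k\}}\mathrm{d}|\mu_n| + \dots$, because the measure contributions on the sets $\{|y_n|>k\}$ do not become uniformly small when $\mu_n$ concentrates at points. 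The standard fix uses the growth condition \ref{A1} directly: the uniform bound of $y_n$ in the Marcinkiewicz space $M^{d/(d-2)}$ together with the exponent restriction $r<d/(d-2)$ (i.e.\ $r<3$ for $d=3$, any $r<\infty$ for $d=2$) makes $\{|y_n|^r\}$ equi-integrable, and $\phi_0\in L^1(\Omega)$ is fixed. This is precisely where the restriction on $r$ in \ref{A1} enters, and your sketch does not use it.

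The genuine gap is in your uniqueness argument. Your choice of dual datum, $-\Delta v = \mathrm{sign}\bigl(a(\cdot,y_1)-a(\cdot,y_2)\bigr)$, suffers from two problems. The minor one: this right-hand side is only in $L^\infty(\Omega)$, not $C(\bar\Omega)$, so $v\notin\mathcal{Z}(\Omega)$ and it is not an admissible test function without a smoothing and density step. The serious one: writing $f:=a(\cdot,y_1)-a(\cdot,y_2)$, the transposition identity gives
\begin{equation*}
\int_{\{f\neq 0\}} |y_1-y_2|\,\mathrm{d}x \;=\; -\int_\Omega f\,(-\Delta)^{-1}\mathrm{sign}(f)\,\mathrm{d}x,
\end{equation*}
where the left-hand side is nonnegative by monotonicity, but the right-hand side has \emph{no} definite sign: positivity of the Green kernel does not make $\int f\,(-\Delta)^{-1}\mathrm{sign}(f)$ nonnegative when $f$ changes sign, since the cross terms $\iint G(x,\xi)f(x)\,\mathrm{sign}(f(\xi))\,\mathrm{d}x\,\mathrm{d}\xi$ can dominate. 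So the conclusion $a(\cdot,y_1)=a(\cdot,y_2)$ does not follow as claimed. Two standard repairs: (i) Kato's inequality, which yields $-\Delta|y_1-y_2| \le -\,\mathrm{sign}(y_1-y_2)\bigl(a(\cdot,y_1)-a(\cdot,y_2)\bigr) \le 0$ in the distributional sense, so $|y_1-y_2|$ is subharmonic with zero boundary values and hence vanishes; or (ii) the linearized-potential duality: set $\chi := (a(\cdot,y_1)-a(\cdot,y_2))/(y_1-y_2)\ge 0$ where $y_1\neq y_2$ (and $0$ otherwise), let $v_k$ solve $-\Delta v_k + \min(\chi,k)\,v_k=\theta$ for arbitrary $\theta\in C_c^\infty(\Omega)$, and pass to the limit $k\uparrow\infty$ using the uniform $L^\infty$ bound on $v_k$ to conclude $\int_\Omega (y_1-y_2)\,\theta\,\mathrm{d}x=0$. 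The second route is exactly the duality construction the paper itself deploys, with the same function $\chi$, in the proof of Theorem \ref{thm:eq:error_estimate_L2_semilinear}.
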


The following remark is in order.

\begin{remark}[variational formulation]
Since the solution $y$ to problem \eqref{eq:weak_semilinear_PDE} belongs to $W_0^{1,p}(\Omega)$, for every $p<d/(d-1)$, the following alternative weak formulation for problem \eqref{eq:semilinear_PDE} can be formulated \cite[Remark 2.3]{MR3162396}: 
\begin{equation}
\EO{y \in W_0^{1,p}(\Omega):}
\quad
 \int_{\Omega} \nabla y \cdot \nabla v \mathrm{d}x + \int_{\Omega} a(x,y) v \mathrm{d}x = \EO{\int_{\Omega} v \mathrm{d}\mu} \quad \forall v \in W_0^{1,p'}(\Omega).
\label{eq:weak_semilinear_PDE_W1p}
\end{equation}
Here, $p'>d$ denotes the H\"older conjugate of $p$. Notice that, within the considered functional space setting, all the terms involved in the weak formulation \EO{\eqref{eq:weak_semilinear_PDE_W1p}} are well defined.
\label{rem:variational_formulation}
\end{remark}


\subsection{\EO{Finite element discretization: an error estimate in $L^2(\Omega)$}}
\label{sec:finite_element_approximation}

\EO{In this section, we shall assume, in addition, that $\Omega$ is a \emph{convex} polytope so that it can be triangulated exactly. We thus introduce} $\mathscr{T}_h = \{ T\}$, a conforming partition of $\bar{\Omega}$ into closed simplices $T$ with size $h_T = \text{diam}(T)$, and define \EO{$h:=\max \{  h_T: T \in \mathscr{T}_h \}$}. We denote by $\mathbb{T} = \{\mathscr{T}_h \}_{h>0}$ a collection of conforming and quasi-uniform meshes $\mathscr{T}_h$, which are refinements of a common mesh $\mathscr{T}_{\star}$. Given a mesh $\mathscr{T}_{h} \in \mathbb{T}$, we define the finite element space of continuous piecewise polynomials of degree one as
\begin{equation}\label{eq:piecewise_linear_set}
\mathbb{V}_{h}:=\{v_{h}\in C(\bar{\Omega}): v_{h}|_T\in \mathbb{P}_{1}(T) \ \forall T\in \T_{h}\}\cap H_0^1(\Omega).
\end{equation}

We define the Galerkin approximation of the solution $y$ to problem \eqref{eq:weak_semilinear_PDE_W1p} by
\begin{equation}\label{eq:weak_semilinear_discrete}
y_h \in\mathbb{V}_h : 
\quad
\int_{\Omega} \nabla y_h \cdot \nabla v_{h} \mathrm{d}x + \int_{\Omega} a(x,y_h) v_{h} \mathrm{d}x = 
\EO{\int_{\Omega}v_h \mathrm{d}\mu}
\quad \forall v_h \in \mathbb{V}_h.
\end{equation}
\EO{The existence of a discrete solution $y_h$, for a fixed mesh $\T_h$, follows from Brouwer's fixed point theorem; uniqueness follows from the monotonicity of $a$.}

We derive an error estimate in $L^2(\Omega)$ that is optimal with respect to regularity. This estimate, which is of independent interest, extends the linear theory developed by \cite{MR812624} to a semilinear scenario.

\begin{theorem}[\EO{$L^2(\Omega)$-error estimate}]
Let $a = a(x,y) : \Omega \times \mathbb{R} \rightarrow \mathbb{R}$ be a Carath\'eodory function that is monotone increasing in $y$ \EO{for} a.e.~$x \in \Omega$ and satisfies \eqref{eq:growing_a}. Assume, in addition, that 
\begin{equation}
|a(x,y) - a(x,z)| \leq |\psi(x)| | y-z | ~\mathrm{a.e.}~x \in \Omega,~\forall y,z \in \mathbb{R}, 
\qquad
\psi \in L^s(\Omega),
\label{eq:a_Lipschitz}
\end{equation}
\EOP{where $s=2$.} If $h$ is sufficiently small, \EO{then} we have the optimal error estimate
\begin{equation}
\label{eq:error_estimate_L2_semilinear}
\| y - y_h \|_{L^2(\Omega)} \lesssim h^{2-\frac{d}{2}},
\end{equation}
with a hidden constant that is independent of $y$, $y_h$, and $h$.
\label{thm:eq:error_estimate_L2_semilinear}
\end{theorem}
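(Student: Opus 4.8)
The plan is to run an Aubin--Nitsche duality argument adapted to the measure datum, reducing the semilinear problem to a linear one by means of a secant coefficient and then invoking (and extending) the sharp linear theory of \cite{MR812624}. First I would linearize: set $e:=y-y_h$ and define, for a.e.\ $x\in\Omega$,
\[
\bar a(x):=\frac{a(x,y(x))-a(x,y_h(x))}{y(x)-y_h(x)}\quad\text{if }y(x)\neq y_h(x),\qquad \bar a(x):=0\quad\text{otherwise,}
\]
so that $a(\cdot,y)-a(\cdot,y_h)=\bar a\,e$. Monotonicity gives $\bar a\ge 0$ and the Lipschitz hypothesis \eqref{eq:a_Lipschitz} gives $0\le\bar a\le|\psi|\in L^2(\Omega)$; note that no differentiability of $a$ is needed here. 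Subtracting \eqref{eq:weak_semilinear_discrete} from \eqref{eq:weak_semilinear_PDE_W1p} tested with $v_h\in\mathbb V_h\subset W_0^{1,p'}(\Omega)$ yields the Galerkin orthogonality relation $\int_\Omega\nabla e\cdot\nabla v_h\,\mathrm dx+\int_\Omega\bar a\,e\,v_h\,\mathrm dx=0$ for every $v_h\in\mathbb V_h$.

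Next I would set up the linearized dual problem: let $\varphi\in H_0^1(\Omega)$ solve $-\Delta\varphi+\bar a\varphi=e$ with homogeneous boundary data. Since $\bar a\ge 0$, Lax--Milgram yields a unique solution with $\|\varphi\|_{H_0^1(\Omega)}\lesssim\|e\|_{L^2(\Omega)}$, where $e\in L^2(\Omega)$ because $y\in W_0^{1,p}(\Omega)\hookrightarrow L^2(\Omega)$ for the admissible range of $p$. The crucial regularity step is to upgrade this to $\varphi\in H^2(\Omega)\cap H_0^1(\Omega)$ with $\|\varphi\|_{H^2(\Omega)}\lesssim\|e\|_{L^2(\Omega)}$. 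As $\Omega$ is convex, elliptic regularity gives $\|\varphi\|_{H^2(\Omega)}\lesssim\|e-\bar a\varphi\|_{L^2(\Omega)}$, so it suffices to control $\|\bar a\varphi\|_{L^2(\Omega)}\le\|\bar a\|_{L^2(\Omega)}\|\varphi\|_{L^\infty(\Omega)}$. Since $d\le 3$ we have $H^2(\Omega)\hookrightarrow L^\infty(\Omega)$ with a Gagliardo--Nirenberg interpolation $\|\varphi\|_{L^\infty(\Omega)}\lesssim\|\varphi\|_{H^2(\Omega)}^{\theta}\|\varphi\|_{L^2(\Omega)}^{1-\theta}$ for some $\theta<1$; Young's inequality then absorbs $\varepsilon\|\varphi\|_{H^2(\Omega)}$ into the left-hand side and closes the estimate via $\|\varphi\|_{L^2(\Omega)}\lesssim\|e\|_{L^2(\Omega)}$. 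This absorption is exactly what makes the choice $s=2$ in \eqref{eq:a_Lipschitz} natural in three dimensions.

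With $\varphi$ in hand I would derive an error representation. Testing the strong dual equation against $e$ and integrating by parts (legitimate since $e\in W_0^{1,p}(\Omega)$ and $\varphi\in H^2(\Omega)\hookrightarrow W^{1,p'}(\Omega)$ for a suitable $p'\in(d,6]$) gives $\|e\|_{L^2(\Omega)}^2=\int_\Omega\nabla e\cdot\nabla\varphi\,\mathrm dx+\int_\Omega\bar a\,e\,\varphi\,\mathrm dx$; subtracting the Galerkin orthogonality with a quasi-interpolant $\varphi_h\in\mathbb V_h$ and writing $\eta:=\varphi-\varphi_h$ produces
\[
\|e\|_{L^2(\Omega)}^2=\int_\Omega\nabla e\cdot\nabla\eta\,\mathrm dx+\int_\Omega\bar a\,e\,\eta\,\mathrm dx.
\]
The reaction term is harmless: $\bigl|\int_\Omega\bar a\,e\,\eta\,\mathrm dx\bigr|\le\|\bar a\|_{L^2(\Omega)}\|e\|_{L^2(\Omega)}\|\eta\|_{L^\infty(\Omega)}$, and the $L^\infty$ interpolation estimate $\|\eta\|_{L^\infty(\Omega)}\lesssim h^{2-d/2}\|\varphi\|_{H^2(\Omega)}\lesssim h^{2-d/2}\|e\|_{L^2(\Omega)}$ shows this contribution is $\lesssim h^{2-d/2}\|e\|_{L^2(\Omega)}^2$, which is absorbed once $h$ is small — this is precisely where the smallness of $h$ enters.

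The principal term $\int_\Omega\nabla e\cdot\nabla\eta\,\mathrm dx$ is the main obstacle and the place where Scott's linear theory must be extended. Using the a priori bound $\|\nabla e\|_{L^p(\Omega)}\lesssim\|\mu\|_{\mathcal M(\Omega)}+\|a(\cdot,0)\|_{L^1(\Omega)}$ from Theorem~\ref{thm:well_posedness_semilinear} together with a uniform discrete $W^{1,p}$-stability bound for $y_h$, a plain Hölder estimate $\|\nabla e\|_{L^p(\Omega)}\|\nabla\eta\|_{L^{p'}(\Omega)}$ only yields the rate $h^{1-d(1/2-1/p')}$, which falls short of $h^{2-d/2}$ by an arbitrarily small power because the admissible exponents force $p'>d$. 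To recover the optimal exponent without this loss I would not estimate $\nabla\eta$ in a single $L^{p'}$ norm but split $\int_\Omega\nabla e\cdot\nabla\eta\,\mathrm dx$ through a dyadic decomposition of $\Omega$ around the (finitely many) singular points of $\mu$, pairing a weighted energy norm of $e$ — finite because $e$ behaves like the gradient of a Green's function away from the singularities — against a weighted interpolation error of $\varphi$, in the spirit of the Muckenhoupt-weight and local-energy techniques of \cite{MR3800041} underlying Scott's sharp estimate. This should deliver $\int_\Omega\nabla e\cdot\nabla\eta\,\mathrm dx\lesssim h^{2-d/2}\|\varphi\|_{H^2(\Omega)}\lesssim h^{2-d/2}\|e\|_{L^2(\Omega)}$; combining the three bounds and absorbing completes the proof. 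I expect the delicate verification of the weighted energy bound for $e$, and of the accompanying uniform discrete stability, to be the hardest and most technical part.
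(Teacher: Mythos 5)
Your opening moves coincide in substance with the paper's: the secant coefficient $\bar a$ (the paper's $\chi$), the nonnegativity and $L^2$-bound from \eqref{eq:a_Lipschitz}, the linearized dual problem, and the $H^2$-regularity of the dual solution (the paper gets $\varphi\in L^\infty$ from Stampacchia's theorem and then convexity; your Gagliardo--Nirenberg absorption is an acceptable alternative). The reaction-term estimate is also fine. The genuine gap is exactly where you admit it is: the principal term $\int_\Omega\nabla e\cdot\nabla\eta\,\mathrm{d}x$. You correctly diagnose that a plain H\"older estimate loses an arbitrarily small power, but your proposed repair --- a dyadic, Muckenhoupt-weighted decomposition around ``the finitely many singular points of $\mu$'' --- is a program, not a proof: the weighted energy bound for the semilinear error $e$ and the uniform discrete stability it leans on are precisely what would have to be established, and \cite{MR3800041} supplies such tools only for the \emph{linear} equation with Dirac sources. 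Worse, it cannot work at the stated level of generality: Theorem \ref{thm:eq:error_estimate_L2_semilinear} concerns an arbitrary $\mu\in\mathcal{M}(\Omega)$ (the paper's bound \eqref{eq:L2_estimate_aux} carries $\|\mu\|_{\mathcal{M}(\Omega)}$), so $\mu$ need not be atomic and there are no ``finitely many singular points'' to decompose around.

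The idea you are missing is that the paper never estimates this gradient pairing at all. It discretizes the dual problem too: with $\mathfrak{z}$ and $\mathfrak{z}_h$ solving \eqref{eq:problem_z} and \eqref{eq:problem_zh} with the \emph{same} coefficient $\chi$, one has the dual Galerkin orthogonality \eqref{eq:z-z_h} and, via a Lagrange-interpolation estimate, an $L^2$ finite element estimate, and an inverse inequality, the maximum-norm bound $\|\mathfrak{z}-\mathfrak{z}_h\|_{L^\infty(\Omega)}\lesssim h^{2-d/2}\|\mathfrak{f}\|_{L^2(\Omega)}$. Then, instead of pairing $\nabla e$ against the interpolation error of the dual solution, one substitutes the primal equation \eqref{eq:weak_semilinear_PDE_W1p} (legitimate since $\mathfrak{z}-\mathfrak{z}_h$ is an admissible test object, as $\mathfrak{z}\in W_0^{1,r}(\Omega)$ for some $r>d$) and takes $v_h=y_h$ in \eqref{eq:z-z_h}; this converts the gradient pairing into $\int_\Omega(\mathfrak{z}-\mathfrak{z}_h)\,\mathrm{d}\mu+(\chi(\mathfrak{z}-\mathfrak{z}_h),y_h)_{L^2(\Omega)}-(a(\cdot,y),\mathfrak{z}-\mathfrak{z}_h)_{L^2(\Omega)}$ plus the Lipschitz remainder, and every term is controlled by $\|\mathfrak{z}-\mathfrak{z}_h\|_{L^\infty(\Omega)}$ multiplied by $\|\mu\|_{\mathcal{M}(\Omega)}$ or an $L^1$-norm bounded through \eqref{eq:growing_a} and \eqref{eq:stability_state}, with the $\|y-y_h\|_{L^2(\Omega)}$ contributions absorbed for small $h$. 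In short: run the duality through the $L^\infty$-error of a \emph{discretized} dual problem (the Casas--Scott device) rather than through a gradient pairing with an interpolant; without this, your argument does not close, and with it no weighted machinery is needed and the full class $\mu\in\mathcal{M}(\Omega)$ is covered.
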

\begin{proof}
We proceed on the basis of a duality argument \EO{and} begin the proof by introducing the \emph{nonnegative} function 
\EO{$\chi$ as follows:}
\[
\chi(x) = \frac{a(x,y(x)) - a(x,y_h(x))}{y(x) - y_h(x)}
~\mathrm{if}~y(x) \neq y_h(x),
\quad
\chi(x) = 0 
~\mathrm{if}~y(x) = y_h(x).
\]
\EO{Observe that \eqref{eq:a_Lipschitz} guarantees that $\chi \in L^s(\Omega)$. Let $\mathfrak{f} \in L^2(\Omega)$ \EO{and let} $\mathfrak{z}$ and $\mathfrak{z}_h$ be the solutions to 
\begin{align}
 (\nabla v, \nabla \mathfrak{z})_{L^2(\Omega)} + (\chi \mathfrak{z}, v)_{L^2(\Omega)} & =  (\mathfrak{f},v)_{L^2(\Omega)} \qquad 
 \forall v \in H_0^1(\Omega),
 \label{eq:problem_z}
 \\
 (\nabla v_h, \nabla \mathfrak{z}_h)_{L^2(\Omega)} + (\chi \mathfrak{z}_h, v_h)_{L^2(\Omega)} & = (\mathfrak{f},v_h)_{L^2(\Omega)} \qquad 
 \forall v_h \in \mathbb{V}_h,
 \label{eq:problem_zh}
\end{align}
respectively. \EOP{Since $\Omega$ is Lipschitz, $\chi \in L^q(\Omega)$, for some $q>d/2$, and $\mathfrak{f} \in L^2(\Omega)$, we invoke \cite[Theorem 4.2]{MR192177} to deduce that $\mathfrak{z} \in L^{\scaleto{\infty}{4pt}}(\Omega)$. As a result, we have that $\mathfrak{f} - \chi\mathfrak{z} \in L^{2}(\Omega)$. We are thus in position to invoke standard elliptic regularity theory on the basis of the convexity of $\Omega$ to deduce that $\mathfrak{z} \in H^2(\Omega) \cap H_0^1(\Omega)$. Notice that, in view of the Sobolev embedding in \cite[Theorem 4.12, Part I, Case C]{MR2424078}, we deduce the existence of $r>d$ such that $\mathfrak{z} \in W_{\scaleto{0}{4pt}}^{\scaleto{1}{4pt},\scaleto{r}{3.5pt}}(\Omega)$.} Let us also observe that
\begin{equation}
(\nabla v_h, \nabla (\mathfrak{z}-\mathfrak{z}_h))_{L^2(\Omega)} + (\chi (\mathfrak{z}-\mathfrak{z}_h), v_h)_{L^2(\Omega)} = 0 \quad \forall v_h \in \mathbb{V}_h.
\label{eq:z-z_h}
\end{equation}
We now bound $\| \mathfrak{z}-\mathfrak{z}_h \|_{L^{\infty}(\Omega)}$. Let $I_h: H^2(\Omega) \cap H_0^1(\Omega) \rightarrow \mathbb{V}_h$  be the Lagrange interpolation operator. A standard interpolation error estimate combined with a finite element error estimate in $L^2(\Omega)$ yield
\[
 \| I_h \mathfrak{z} - \mathfrak{z}_h \|_{L^{2}(\Omega)} \leq \| I_h \mathfrak{z} - \mathfrak{z} \|_{L^{2}(\Omega)} + \| \mathfrak{z} - \mathfrak{z}_h \|_{L^{2}(\Omega)} \lesssim h^2|\mathfrak{z} |_{H^2(\Omega)}.
\]
Let us now utilize an inverse inequality and $\| \mathfrak{z}-I_h \mathfrak{z} \|_{L^{\infty}(\Omega)} \lesssim h^{\sigma} |\mathfrak{z} |_{H^2(\Omega)}$, where $\sigma = 2 - d/2$, to obtain}
\begin{equation*}
 \EO{\| \mathfrak{z}-\mathfrak{z}_h \|_{L^{\infty}(\Omega)} \leq  \| \mathfrak{z}-I_h \mathfrak{z} \|_{L^{\infty}(\Omega)}
 +
 \| I_h \mathfrak{z} - \mathfrak{z}_h \|_{L^{\infty}(\Omega)}
 \lesssim 
 h^{\sigma} |\mathfrak{z} |_{H^2(\Omega)}
 +
 h^{-\frac{d}{2}} \| I_h \mathfrak{z} - \mathfrak{z}_h \|_{L^{2}(\Omega)}
 \leq 
 Ch^{\sigma}\|\mathfrak{f} \|_{L^2(\Omega)},}
\end{equation*}
\EO{where $C$ depends on $\| \chi\|_{L^s(\Omega)}$. This error estimate shows, in particular, that $\mathfrak{z}_h \in L^{\infty}(\Omega)$ uniformly with respect to discretization; recall that $\mathfrak{z} \in H^2(\Omega) \cap H_0^1(\Omega) \hookrightarrow C(\bar \Omega)$.}

Set $\mathfrak{f} = y - y_h \in L^2(\Omega)$. A density argument allows us to set $v = y - y_h$ in problem \eqref{eq:problem_z} and obtain
\begin{align*}
\| y-y_h \|^2_{L^2(\Omega)}   
 & = (\nabla (y-y_h), \nabla \mathfrak{z})_{L^2(\Omega)} + (\chi \mathfrak{z}, y - y_h)_{L^2(\Omega)} 
\\
 & =
(\nabla (y-y_h), \nabla (\mathfrak{z}-\mathfrak{z}_h))_{L^2(\Omega)} 
+
(a(\cdot,y)-a(\cdot,y_h), \mathfrak{z}-\mathfrak{z}_h)_{L^2(\Omega)}.
\end{align*}
We stress that, since there exists $r>d$ such that $\mathfrak{z} \in W_0^{1,r}(\Omega)$ and $y-y_h \in W_0^{1,p}(\Omega)$, for every $p<d/(d-1)$, \EO{the term} $(\nabla (y-y_h), \nabla (\mathfrak{z}-\mathfrak{z}_h))_{L^2(\Omega)}$ is well defined. \EO{Now,} since $y$ solves \eqref{eq:weak_semilinear_PDE_W1p}, we obtain 
\begin{multline}
\| y-y_h \|^2_{L^2(\Omega)} =
\EO{\int_{\Omega} (\mathfrak{z}-\mathfrak{z}_h) \mathrm{d}\mu} - (\nabla y_h, \nabla (\mathfrak{z}-\mathfrak{z}_h))_{L^2(\Omega)} 
\\
- 
\EO{(a(\cdot,y),\mathfrak{z}-\mathfrak{z}_h)_{L^2(\Omega)}
+
(a(\cdot,y)-a(\cdot,y_h),\mathfrak{z}-\mathfrak{z}_h)_{L^2(\Omega)}},
\label{eq:aux_L2_estimate}
\end{multline}
upon utilizing that there exits $r>d$ such that $\mathfrak{z} \in W_0^{1,r}(\Omega)$ \EO{and that $W_0^{1,r}(\Omega) \hookrightarrow C(\bar \Omega)$.} We thus set $v_h = y_h$ in \EO{\eqref{eq:z-z_h} to obtain, in view of the identity \eqref{eq:aux_L2_estimate}, the relation}
\begin{multline}
\| y-y_h \|^2_{L^2(\Omega)} = \EO{\int_{\Omega} (\mathfrak{z}-\mathfrak{z}_h)\mathrm{d}\mu} + (\chi (\mathfrak{z}-\mathfrak{z}_h), y_h)_{L^2(\Omega)} 
\\
- 
\EO{(a(\cdot,y),\mathfrak{z}-\mathfrak{z}_h)_{L^2(\Omega)}
+
(a(\cdot,y)-a(\cdot,y_h),\mathfrak{z}-\mathfrak{z}_h)_{L^2(\Omega)}}.
\end{multline}
\EO{Since $\mathfrak{f} = y - y_h$, this identity and the error bound $\| \mathfrak{z}-\mathfrak{z}_h \|_{L^{\infty}(\Omega)} \lesssim h^{\sigma}\|\mathfrak{f} \|_{L^2(\Omega)}$ imply the estimate}
\begin{equation}
\| y-y_h \|_{L^2(\Omega)} \lesssim h^{\sigma}\left( 
\| \mu \|_{\mathcal{M}(\Omega)} 
+ \| \chi y_h\|_{L^1(\Omega)}  +  
\EO{\| a(\cdot,y) \|_{L^1(\Omega)}
+
\| \Psi \|_{L^2(\Omega)} \| y - y_h \|_{L^2(\Omega)}}
\right).
\label{eq:L2_estimate_aux}
\end{equation}
\EO{In view of \eqref{eq:growing_a} and \eqref{eq:stability_state}, we deduce $\| a(\cdot,y) \|_{L^1(\Omega)} \lesssim  \| \phi_0 \|_{L^1(\Omega)} + [ \| \mu \|_{\mathcal{M}(\Omega)} + \| a(\cdot,0) \|_{L^1(\Omega)}]^r$. The control of $\| \chi y_h\|_{L^1(\Omega)}$ follows from H\"older's inequality, the triangle inequality and \eqref{eq:stability_state}:}
\begin{equation}
 \EO{\| \chi y_h\|_{L^1(\Omega)}  
 \lesssim \|\chi\|_{L^2(\Omega)} \left[ \| \mu \|_{\mathcal{M}(\Omega)} + \| a(\cdot,0) \|_{L^1(\Omega)} + \| y - y_h \|_{L^2(\Omega)}\right].}
 \label{eq:L2_aux_aux}
\end{equation}
\EO{Replace the estimates obtained for $\| \chi y_h\|_{L^1(\Omega)}$ and $\|a(\cdot,y)\|_{L^1(\Omega)}$} into \eqref{eq:L2_estimate_aux} and utilize the assumption that $h$ is sufficiently small so that the terms involving $\|y-y_h\|_{L^2(\Omega)}$ in \eqref{eq:L2_estimate_aux} and \eqref{eq:L2_aux_aux} can be absorbed in the left hand side of \eqref{eq:L2_estimate_aux}. These arguments yield the desired bound \eqref{eq:error_estimate_L2_semilinear} and conclude the proof.
\end{proof}


\section{The semilinear optimal control problem with singular sources}
\label{sec:semilinear_optimal_control}
Let us precisely describe and analyze the \EO{semilinear} optimal control problem with point sources introduced in Section \ref{sec:introduction}. We begin our studies by defining the set of admissible controls 
\begin{equation}
\label{eq:Uad}
\mathbf{U}_{ad}:= \left\{ \mathbf{u} = \{\mathsf{u}_{\mathsf{z}} \}_{\mathsf{z} \in \mathcal{D}} \in \mathbb{R}^{\ell}
:
~\mathsf{a}_{\mathsf{z}} \leq \mathsf{u}_{\mathsf{z}} \leq \mathsf{b}_{\mathsf{z}}~\forall \mathsf{z} \in \mathcal{D} \right \},
\end{equation}
where $\mathbf{a} = \{ \mathsf{a}_{\mathsf{z}} \}_{\mathsf{z} \in \mathcal{D}}$ and $\mathbf{b} = \{ \mathsf{b}_{\mathsf{z}} \}_{\mathsf{z} \in \mathcal{D}}$ both belong to $\mathbb{R}^{\ell}$ and satisfy that $\mathsf{a}_{\mathsf{z}} < \mathsf{b}_{\mathsf{z}}$ for \EO{every} $\mathsf{z} \in \mathcal{D}$. We recall that $\mathcal{D}$ denotes a finite ordered subset of $\Omega$ with cardinality $\# \mathcal{D} = \ell$. Notice that $\mathbf{U}_{ad}$ \EO{is a nonempty, closed, and bounded} subset of $\mathbb{R}^{\ell}$. 

We define the \emph{semilinear optimal control problem with point sources} as follows: Find
\begin{equation}
\label{eq:min}
\min \{ J (\mathsf{y},\mathbf{u}): (\mathsf{y},\mathbf{u}) \in W_0^{1,p}(\Omega) \times \mathbf{U}_{ad} \}
\end{equation}
subject to the following weak formulation of the state equation: Find $\mathsf{y} \in W_0^{1,p}(\Omega)$ such that
\begin{equation}
\label{eq:state_equation}
 \int_{\Omega} \nabla \mathsf{y} \cdot \nabla v \mathrm{d}x + \int_{\Omega} a(x,\mathsf{y}) v \mathrm{d}x = \sum_{\mathsf{z} \in \mathcal{D} } \mathsf{u}_{\mathsf{z}} v(\mathsf{z}) 
 \quad 
 \forall v \in W_0^{1,p'}(\Omega),
 \quad
 p<d/(d-1),
 \quad 
 p'>d.
\end{equation}
Here, $p'$ denotes the H\"older conjugate of $p$. \EO{Define $\mathfrak{G}: W_0^{1,p'}(\Omega) \rightarrow \mathbb{R}$ by $\mathfrak{G} v = \sum_{\mathsf{z} \in \mathcal{D}} \mathsf{u}_{\mathsf{z}} v(\mathsf{z})$ and observe that $\mathfrak{G} \in \mathcal{M}(\Omega)$. Let us now} assume that $a = a(x,y) : \Omega \times \mathbb{R} \rightarrow \mathbb{R}$ is a Carath\'eodory function that is monotone increasing in $y$ for a.e.~$x$ in $\Omega$ and satisfies \eqref{eq:growing_a}. \EO{In view of this assumption and the fact that $\mathfrak{G} \in \mathcal{M}(\Omega)$, an application of Theorem \ref{thm:well_posedness_semilinear} yields the existence of a unique solution $\mathsf{y}$ to problem \eqref{eq:state_equation}}.

\EO{In order to analyze our optimal control problem, we introduce the so-called control to state map} $\mathcal{S}: \mathbf{U}_{ad} \rightarrow W_{\scaleto{0}{4pt}}^{1,p}(\Omega)$, which, given a control \EO{$\mathbf{u} \in \mathbf{U}_{ad}$}, associates to it the unique state \EO{$\mathsf{y} \in W_{\scaleto{0}{4pt}}^{1,p}(\Omega)$} that solves \eqref{eq:state_equation}. We notice that the map $\mathcal{S}$ is bounded. In fact, Theorem \ref{thm:well_posedness_semilinear} immediately yields the bound 
\[
\| \nabla (\mathcal{S} \mathbf{u}) \|_{L^{p}(\Omega)} \lesssim 
\EO{\sum_{\mathsf{z} \in \mathcal{D}} |\mathsf{u}_{\mathsf{z}}|
+
\| a(\cdot,0)\|_{L^1(\Omega)}},
\qquad
p<d/(d-1).
\]
With $\mathcal{S}$ at hand, we introduce 
$j: \mathbf{U}_{ad} \rightarrow \mathbb{R}$ by $j(\mathbf{u}):=J(\mathcal{S}\mathbf{u},\mathbf{u})$. \EO{We recall that $J$ is defined in \eqref{def:cost_func}.}


\subsection{Existence of optimal controls}
\label{sec:existence_optimal_control}

The existence of an optimal state-control pair \EO{$(\bar{\mathsf{y}},\bar{\mathbf{u}}) \in W_0^{1,p}(\Omega) \times \mathbf{U}_{ad}$ is as follows.}

\begin{theorem}[\EO{existence}]
Let $\Omega$ be an open and bounded domain with Lipschitz boundary \EO{and $a$ be such that \ref{A1} holds.}
Then, problem \eqref{eq:min}--\eqref{eq:state_equation} admits at least one global solution $(\bar{\mathsf{y}},\bar{\mathbf{u}})$.
\label{thm:existence_optimal_pair}
\end{theorem}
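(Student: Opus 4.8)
The plan is to use the direct method of the calculus of variations. Since $J$ is bounded below by zero, the infimum $\mathfrak{m} := \inf_{\mathbf{u} \in \mathbf{U}_{ad}} j(\mathbf{u})$ is finite, so I can extract a minimizing sequence $\{\mathbf{u}_n\}_{n \in \mathbb{N}} \subset \mathbf{U}_{ad}$ with $j(\mathbf{u}_n) \to \mathfrak{m}$. Set $\mathsf{y}_n := \mathcal{S}\mathbf{u}_n$. The decisive simplification here, compared with the infinite-dimensional control problems in the cited literature, is that the control space is the \emph{finite-dimensional} set $\mathbf{U}_{ad} \subset \mathbb{R}^{\ell}$, which is nonempty, closed, and bounded, hence compact. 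I can therefore pass to a subsequence (not relabeled) such that $\mathbf{u}_n \to \bar{\mathbf{u}}$ in $\mathbb{R}^{\ell}$, and since $\mathbf{U}_{ad}$ is closed, $\bar{\mathbf{u}} \in \mathbf{U}_{ad}$. No weak compactness argument for the control is needed.

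The main work is to show that the corresponding states converge to $\mathcal{S}\bar{\mathbf{u}}$ in a sense strong enough to pass to the limit in the state equation. First I would establish a uniform bound: the stability estimate \eqref{eq:stability_state} of Theorem \ref{thm:well_posedness_semilinear}, applied with $\mu = \mathfrak{G}_n := \sum_{\mathsf{z} \in \mathcal{D}} (\mathsf{u}_n)_{\mathsf{z}} \delta_{\mathsf{z}}$, gives $\| \nabla \mathsf{y}_n \|_{L^p(\Omega)} \lesssim \sum_{\mathsf{z} \in \mathcal{D}} |(\mathsf{u}_n)_{\mathsf{z}}| + \| a(\cdot,0) \|_{L^1(\Omega)}$, and the right-hand side is bounded uniformly in $n$ because $\{\mathbf{u}_n\}$ is bounded. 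Fixing any $p < d/(d-1)$, the sequence $\{\mathsf{y}_n\}$ is thus bounded in the reflexive space $W_0^{1,p}(\Omega)$, so a further subsequence satisfies $\mathsf{y}_n \rightharpoonup \bar{\mathsf{y}}$ in $W_0^{1,p}(\Omega)$ for some $\bar{\mathsf{y}}$. By the compact Rellich--Kondrachov embedding $W_0^{1,p}(\Omega) \hookrightarrow\hookrightarrow L^1(\Omega)$ I upgrade this to $\mathsf{y}_n \to \bar{\mathsf{y}}$ strongly in $L^1(\Omega)$, and, after a further subsequence, pointwise a.e.\ in $\Omega$.

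The hardest step is passing to the limit in the nonlinear term $\int_\Omega a(x,\mathsf{y}_n) v\,\mathrm{d}x$ in the weak formulation \eqref{eq:state_equation}, since $a$ is only continuous and of critical growth under \ref{A1}. The linear term $\int_\Omega \nabla \mathsf{y}_n \cdot \nabla v\,\mathrm{d}x \to \int_\Omega \nabla \bar{\mathsf{y}} \cdot \nabla v\,\mathrm{d}x$ follows directly from weak convergence of the gradients against the fixed test function $\nabla v \in L^{p'}(\Omega)$, and the source $\sum_{\mathsf{z}} (\mathsf{u}_n)_{\mathsf{z}} v(\mathsf{z}) \to \sum_{\mathsf{z}} \bar{\mathsf{u}}_{\mathsf{z}} v(\mathsf{z})$ by the convergence $\mathbf{u}_n \to \bar{\mathbf{u}}$. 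For the nonlinear term I would invoke the a.e.\ convergence $\mathsf{y}_n \to \bar{\mathsf{y}}$, Carath\'eodory continuity to get $a(x,\mathsf{y}_n(x)) \to a(x,\bar{\mathsf{y}}(x))$ a.e., and then a uniform-integrability / dominated-convergence argument exploiting the growth bound \eqref{eq:growing_a} together with higher integrability of $\{\mathsf{y}_n\}$ coming from the embedding $W_0^{1,p}(\Omega) \hookrightarrow L^{p^*}(\Omega)$; since $r$ is strictly subcritical in \ref{A1}, the powers $|\mathsf{y}_n|^r$ are bounded in some $L^{\rho}(\Omega)$ with $\rho > 1$, giving equi-integrability and hence $a(\cdot,\mathsf{y}_n) \to a(\cdot,\bar{\mathsf{y}})$ in $L^1(\Omega)$ by Vitali's theorem. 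Passing to the limit then identifies $\bar{\mathsf{y}}$ as the weak solution of \eqref{eq:state_equation} associated with $\bar{\mathbf{u}}$, i.e.\ $\bar{\mathsf{y}} = \mathcal{S}\bar{\mathbf{u}}$ by uniqueness.

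Finally I would conclude by lower semicontinuity of the cost. The tracking term $\tfrac{1}{2}\|\mathsf{y}_n - \mathsf{y}_d\|_{L^2(\Omega)}^2$ is weakly lower semicontinuous, and in fact the strong $L^1$ and a.e.\ convergence (plus, if needed, higher integrability to secure $L^2$ convergence of the states) gives $\liminf_n \|\mathsf{y}_n - \mathsf{y}_d\|_{L^2(\Omega)}^2 \geq \|\bar{\mathsf{y}} - \mathsf{y}_d\|_{L^2(\Omega)}^2$, while the control term is continuous in $\mathbb{R}^{\ell}$, so $\tfrac{\alpha}{2}\|\mathbf{u}_n\|_{\mathbb{R}^\ell}^2 \to \tfrac{\alpha}{2}\|\bar{\mathbf{u}}\|_{\mathbb{R}^\ell}^2$. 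Hence $j(\bar{\mathbf{u}}) = J(\bar{\mathsf{y}}, \bar{\mathbf{u}}) \leq \liminf_n J(\mathsf{y}_n, \mathbf{u}_n) = \mathfrak{m}$, and since $\bar{\mathbf{u}} \in \mathbf{U}_{ad}$ we also have $j(\bar{\mathbf{u}}) \geq \mathfrak{m}$, so $(\bar{\mathsf{y}}, \bar{\mathbf{u}})$ is a global minimizer. I expect the nonlinear passage to the limit to be the only genuinely delicate point; everything else is routine given the finite-dimensionality of $\mathbf{U}_{ad}$ and Theorem \ref{thm:well_posedness_semilinear}.
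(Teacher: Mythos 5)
Your proof is correct, and it reaches the same structural conclusion as the paper --- minimize the reduced functional $j$ over the compact set $\mathbf{U}_{ad}$ --- but it differs in how the decisive continuity property is obtained. The paper's proof is essentially three lines: compactness of $\mathbf{U}_{ad}$ as a closed bounded subset of $\mathbb{R}^{\ell}$, continuity of $j$ on $\mathbf{U}_{ad}$ imported wholesale from the convergence results of \cite[Theorem 2.1]{MR3162396}, and the Weierstrass theorem. You instead run the direct method and re-derive the needed stability of $\mathbf{u} \mapsto \mathcal{S}\mathbf{u}$ by hand: the uniform $W_0^{1,p}(\Omega)$ bound from \eqref{eq:stability_state} (note $a(\cdot,0)\in L^1(\Omega)$ indeed follows from \eqref{eq:growing_a} with $y=0$), reflexivity plus Rellich--Kondrachov for strong $L^1$ and a.e.\ convergence of the states, and Vitali's theorem for the nonlinear term. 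Your Vitali step is the genuinely delicate point and it does close: since $r$ is strictly subcritical in \ref{A1}, you may fix $p$ close enough to $d/(d-1)$ that $p^{*} = dp/(d-p) > r\rho$ for some $\rho>1$ (e.g.\ $p^{*}\uparrow 3$ as $p \uparrow 3/2$ when $d=3$, and $r<3$), giving equi-integrability of $|a(\cdot,\mathsf{y}_n)|$; the same device with $p^{*}>2$ (say $p=4/3$ when $d=3$) upgrades the tracking term from your liminf inequality to an actual limit, which is exactly the continuity of $j$ that the paper cites. The identification $\bar{\mathsf{y}}=\mathcal{S}\bar{\mathbf{u}}$ via uniqueness of the variational formulation \eqref{eq:weak_semilinear_PDE_W1p} is consistent with Theorem \ref{thm:well_posedness_semilinear} and Remark \ref{rem:variational_formulation}. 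In short: the paper buys brevity by outsourcing the analytic core to \cite{MR3162396}, while your argument is longer but self-contained and makes visible precisely where the subcriticality of $r$ in \ref{A1} and the finite-dimensionality of the control space enter.
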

\begin{proof}
\EO{Using the reduced cost functional $j$, our optimal control problem \eqref{eq:min}--\eqref{eq:state_equation} reduces to: Minimize $j(\mathbf{u})$ over $\mathbf{U}_{ad}$. Let us now observe that, as a bounded and closed set in a finite dimensional space, $\mathbf{U}_{ad}$ is compact. On the other hand, the convergence results provided in \cite[Theorem 2.1]{MR3162396} guarantee that $j$ is continuous on $\mathbf{U}_{ad}$. With these ingredients at hand, the desired result follows from the well known Weierstrass theorem.}
\end{proof}


\subsection{\EO{Differentiability properties of the control to state map $\mathcal{S}$ and the adjoint equation}}
\label{sec:1st_order}
In this section, we analyze differentiability properties of $\mathcal{S}$ and derive first order necessary optimality conditions for \eqref{eq:min}--\eqref{eq:state_equation}. Since the control problem \eqref{eq:min}--\eqref{eq:state_equation} is not convex, we discuss optimality conditions in the context of local solutions.

\subsubsection{\EO{Differentiability of the control to state map}} We present the following result.

\begin{theorem}[\EO{differentiability of $\mathcal{S}$}]
\label{thm:first_order_diff}
\EO{Let $\Omega$ be an open and bounded domain with Lipschitz boundary. If $a$ is such that \textnormal{\ref{A1} and \ref{A2}} hold, then the map $\mathcal{S}: \mathbb{R}^{\ell} \rightarrow W_0^{1,p}(\Omega)$ is of class $C^1$ for $p<d/(d-1)$. 
In addition, if $\mathbf{u}, \mathbf{v} \in \EO{\mathbb{R}^{\ell}}$, then $\phi = \mathcal{S}'(\mathbf{u}) \mathbf{v} \in W_0^{1,p}(\Omega)
$ corresponds to the unique solution to 
\begin{equation}\label{eq:aux_adjoint}
(\nabla \phi, \nabla v)_{L^2(\Omega)} +\left(\tfrac{\partial a}{\partial y}(\cdot,\mathsf{y})\phi, v \right)_{L^2(\Omega)}=
\sum_{\mathsf{z} \in \mathcal{D}} \mathsf{v}_{\mathsf{z}} \langle \delta_{\mathsf{z}}, v \rangle 
\quad 
\forall v \in W_0^{1,p'}(\Omega),
\quad
\mathsf{y} = \mathcal{S}(\mathbf{u}).
\end{equation}
If, in addition, \textnormal{\ref{A3}} holds, then $\mathcal{S}: \mathbb{R}^{\ell} \rightarrow W_0^{1,p}(\Omega)$ is of class $C^2$ for $p<d/(d-1)$. Let $\mathbf{u}, \mathbf{v}, \mathbf{w} \in \mathbb{R}^{\ell}$. Then, $\varphi = \mathcal{S}''(\mathbf{u})( \mathbf{v},\mathbf{w}) \in W_0^{1,p}(\Omega)$ corresponds to the unique solution to
\begin{equation}\label{eq:aux_second_adjoint_new}
(\nabla \varphi, \nabla v)_{L^2(\Omega)} +\left(\tfrac{\partial a}{\partial y}(\cdot,\mathsf{y})\varphi, v \right)_{L^2(\Omega)}=
- \left( \tfrac{\partial^2 a}{\partial y^2}(\cdot,\mathsf{y}) \phi_{\mathbf{v}} \phi_{\mathbf{w}}, v \right)_{L^2(\Omega)}
\quad
\forall v \in W_0^{1,p'}(\Omega),
\quad \!\!
\mathsf{y} = \mathcal{S}(\mathbf{u}).
\end{equation}
Here, $\phi_{\mathbf{v}} = \mathcal{S}'(\mathbf{u}) \mathbf{v}$ and $\phi_{\mathbf{w}} = \mathcal{S}'(\mathbf{u}) \mathbf{w}$.} 
\end{theorem}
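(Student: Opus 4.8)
The plan is to realize $\mathcal{S}$ as an implicit function and to invoke the implicit function theorem in the Banach space setting. Fix $p<d/(d-1)$ and let $p'>d$ be its conjugate, so that $W_0^{1,p'}(\Omega)\hookrightarrow C(\bar\Omega)$ and consequently $\delta_{\mathsf z}\in W^{-1,p}(\Omega):=(W_0^{1,p'}(\Omega))'$ for every $\mathsf z\in\mathcal D$; similarly $L^1(\Omega)\hookrightarrow W^{-1,p}(\Omega)$ because $W_0^{1,p'}(\Omega)\hookrightarrow L^\infty(\Omega)$. I would then define $\mathcal F:\mathbb R^\ell\times W_0^{1,p}(\Omega)\to W^{-1,p}(\Omega)$ by $\langle\mathcal F(\mathbf u,\mathsf y),v\rangle=(\nabla\mathsf y,\nabla v)_{L^2(\Omega)}+(a(\cdot,\mathsf y),v)_{L^2(\Omega)}-\sum_{\mathsf z\in\mathcal D}\mathsf u_{\mathsf z}v(\mathsf z)$, so that, by Theorem \ref{thm:well_posedness_semilinear}, the relation $\mathcal F(\mathbf u,\mathcal S\mathbf u)=0$ is precisely \eqref{eq:state_equation} and $\mathsf y=\mathcal S\mathbf u$ is its unique solution.

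The first key step is to prove that $\mathcal F$ is of class $C^1$. The affine part $v\mapsto\sum_{\mathsf z}\mathsf u_{\mathsf z}v(\mathsf z)$ is bounded and bilinear in $(\mathbf u,v)$, hence smooth, and $-\Delta$ is linear and bounded; the only nontrivial contribution is the superposition operator $N_a:\mathsf y\mapsto a(\cdot,\mathsf y)$. I would factor it as $W_0^{1,p}(\Omega)\hookrightarrow L^{p^\ast}(\Omega)\xrightarrow{N_a}L^{\rho}(\Omega)\hookrightarrow W^{-1,p}(\Omega)$, where $p^\ast$ is the Sobolev conjugate of $p$ and $\rho\ge1$ is read off from \eqref{eq:growing_a} and \eqref{eq:growing_a_derivative}. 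The decisive point is that \ref{A2} forces $r<\infty$ if $d=2$ and $r<2$ if $d=3$: for $p$ chosen close enough to $d/(d-1)$ one then has $p^\ast/r>d/2$, whence $(\partial a/\partial y)(\cdot,\mathsf y)\in L^{q}(\Omega)$ for some $q>d/2$ whenever $\mathsf y\in L^{p^\ast}(\Omega)$. The resulting strict gap between the source exponent $p^\ast$ and the target exponent $\rho$ is exactly what renders $N_a$ Fréchet differentiable with derivative $\phi\mapsto(\partial a/\partial y)(\cdot,\mathsf y)\phi$; I would verify this by the mean value theorem applied to the remainder, together with a Krasnoselskii-type continuity argument for superposition operators between Lebesgue spaces and dominated convergence.

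The second key step is to show that $\partial_{\mathsf y}\mathcal F(\mathbf u,\mathsf y):W_0^{1,p}(\Omega)\to W^{-1,p}(\Omega)$, $\phi\mapsto-\Delta\phi+(\partial a/\partial y)(\cdot,\mathsf y)\phi$, is an isomorphism. Since $(\partial a/\partial y)(\cdot,\mathsf y)\ge0$ lies in $L^{q}(\Omega)$ with $q>d/2$, injectivity follows from monotonicity, while the isomorphism property follows from the linear elliptic theory for $-\Delta+c$ with $0\le c\in L^{q}(\Omega)$, $q>d/2$ — the same regularity mechanism (duality against the $L^\infty(\Omega)$ bound for the adjoint solution furnished by \cite[Theorem 4.2]{MR192177}) that was used for $\mathfrak z$ in the proof of Theorem \ref{thm:eq:error_estimate_L2_semilinear}, combined with Theorem \ref{thm:well_posedness_semilinear}. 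With these two steps the implicit function theorem yields $\mathcal S\in C^1$ and, upon differentiating $\mathcal F(\mathbf u,\mathcal S\mathbf u)=0$ in a direction $\mathbf v$, the identity $-\Delta(\mathcal S'(\mathbf u)\mathbf v)+(\partial a/\partial y)(\cdot,\mathsf y)\mathcal S'(\mathbf u)\mathbf v=\sum_{\mathsf z}\mathsf v_{\mathsf z}\delta_{\mathsf z}$, i.e.\ the weak formulation \eqref{eq:aux_adjoint}. For the $C^2$ statement I would repeat the argument under \ref{A3}: the bound \eqref{eq:growing_a_second_derivative} (with $r<1$ if $d=3$) places $(\partial^2 a/\partial y^2)(\cdot,\mathsf y)$ in a space $L^{t}(\Omega)$ guaranteeing that the symmetric bilinear map $(\phi,\psi)\mapsto(\partial^2 a/\partial y^2)(\cdot,\mathsf y)\phi\psi$ is bounded from $W_0^{1,p}(\Omega)\times W_0^{1,p}(\Omega)$ into $W^{-1,p}(\Omega)$, so that $N_a$, and hence $\mathcal F$, are $C^2$; the $C^2$ implicit function theorem then gives $\mathcal S\in C^2$, and differentiating \eqref{eq:aux_adjoint} once more in a direction $\mathbf w$ (recalling that both $\mathsf y=\mathcal S(\mathbf u)$ and $\phi_{\mathbf v}=\mathcal S'(\mathbf u)\mathbf v$ depend on $\mathbf u$) produces $-\Delta\varphi+(\partial a/\partial y)(\cdot,\mathsf y)\varphi=-(\partial^2 a/\partial y^2)(\cdot,\mathsf y)\phi_{\mathbf v}\phi_{\mathbf w}$, which is \eqref{eq:aux_second_adjoint_new}.

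I expect the main obstacle to be the Fréchet differentiability of the superposition operator $N_a$ in this low-integrability regime: unlike the $L^\infty$ or Hilbert-space settings, $W_0^{1,p}(\Omega)$ with $p<d/(d-1)$ embeds only into a Lebesgue space of modest exponent, so the whole argument hinges on checking that the growth restrictions \ref{A2}--\ref{A3} leave a \emph{strict} gap between the source exponent $p^\ast$ and the relevant target exponent, and on carefully tracking these exponents (together with $q>d/2$ for the potential) so that every product term indeed lands in $W^{-1,p}(\Omega)$ and the linearized operators remain isomorphisms.
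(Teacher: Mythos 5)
Your overall strategy (implicit function theorem for $\mathcal F(\mathbf u,\mathsf y)=-\Delta\mathsf y+a(\cdot,\mathsf y)-\sum_{\mathsf z}\mathsf u_{\mathsf z}\delta_{\mathsf z}$, with Nemytskii differentiability supplying smoothness of $\mathcal F$) is the right engine, and your exponent bookkeeping for the superposition operator under \ref{A2}--\ref{A3} is sound. However, there is a genuine gap at the linchpin of your argument: the claim that $\partial_{\mathsf y}\mathcal F(\mathbf u,\mathsf y)=-\Delta+\tfrac{\partial a}{\partial y}(\cdot,\mathsf y):W_0^{1,p}(\Omega)\to W^{-1,p}(\Omega)$ is an \emph{isomorphism}, justified by ``the linear elliptic theory for $-\Delta+c$'' together with \cite[Theorem 4.2]{MR192177} and Theorem \ref{thm:well_posedness_semilinear}. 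Those tools give solvability in $W_0^{1,p}(\Omega)$, $p<d/(d-1)$, only for data in $\mathcal M(\Omega)$ (plus $L^1(\Omega)$), which is a \emph{proper} subset of $W^{-1,p}(\Omega)$: a general element of $W^{-1,p}(\Omega)$ has the form $f+\operatorname{div}F$ with $F\in L^p(\Omega)^d$, and such divergence-form data are not measures. Surjectivity onto $W^{-1,p}(\Omega)$ is, by duality, equivalent to $W_0^{1,p'}$-solvability of the adjoint problem with $W^{-1,p'}$ data for some $p'>d$, and on a merely Lipschitz domain (the hypothesis of the theorem) this is a hard harmonic-analysis fact in the Jerison--Kenig range: available for $p'\in(2,4)$ when $d=2$, only barely available ($p'\in(3,3+\epsilon)$) when $d=3$, and \emph{false} for $p$ too close to $1$ on general Lipschitz domains. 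So as written, the isomorphism ``for every $p<d/(d-1)$'' is not established (nor even true without restricting $p$), and the Stampacchia-based duality you invoke cannot close this step; relatedly, your injectivity claim ``by monotonicity'' also silently uses the same dual solvability, since $\phi\in W_0^{1,p}(\Omega)$ cannot be used as its own test function.

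The standard repair -- and, in essence, what the proof of the cited \cite[Theorem 2.2]{MR3162396} does -- is to change the target space: take $Z=\mathcal M(\Omega)$ (or even $L^1(\Omega)\oplus\operatorname{span}\{\delta_{\mathsf z}\}_{\mathsf z\in\mathcal D}$, exploiting that your parameter space is just $\mathbb R^{\ell}$) and as state space the graph space $Y=\{y\in W_0^{1,p}(\Omega):\Delta y\in\mathcal M(\Omega)\}$ with its graph norm. Then surjectivity of $-\Delta+c:Y\to Z$ is \emph{exactly} the measure-data solvability already furnished by Theorem \ref{thm:well_posedness_semilinear} and Stampacchia's duality, with no Jerison--Kenig input, and the rest of your argument (Nemytskii $C^1$/$C^2$ with the strict exponent gap, differentiation of the identity $\mathcal F(\mathbf u,\mathcal S\mathbf u)=0$ to obtain \eqref{eq:aux_adjoint} and \eqref{eq:aux_second_adjoint_new}) goes through unchanged. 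Note also how the paper itself proceeds: it does not run the implicit function theorem from scratch, but reduces the statement to the $C^1$ (resp.\ $C^2$) property of $\mathsf y\mapsto a(\cdot,\mathsf y)$ from $W_0^{1,p}(\Omega)$ into $L^1(\Omega)$ (adapting \cite[Lemma 4.12]{Troltzsch}) and then quotes \cite[Theorem 2.2]{MR3162396}, thereby outsourcing precisely the functional-analytic setup on which your proposal stumbles.
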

\begin{proof}
\EO{We begin the proof by noticing that $\mathsf{y} \mapsto a(\cdot,\mathsf{y})$ is $C^1$ as a map from $W_0^{1,p}(\Omega)$ into $L^1(\Omega)$ for $p<d/(d-1)$. This result follows from an an adaption of the arguments elaborated in the proof of \cite[Lemma 4.12]{Troltzsch} combined with the fact that $\partial a/ \partial y$ satisfies \eqref{eq:growing_a_derivative}. The first order differentiability of $\mathcal{S}$ thus follows from \cite[Theorem 2.2]{MR3162396}. Alternatively, such a result can be derived utilizing the arguments elaborated in the proof of \cite[Theorem 4.17]{Troltzsch}. 

Let us now observe that, since \ref{A3} holds, $W_0^{1,p}(\Omega) \ni \mathsf{y} \mapsto a(\cdot,\mathsf{y}) \in L^1(\Omega)$ is of class $C^2$. The second order differentiability of $\mathcal{S}$ thus follows from \cite[Theorem 2.2]{MR3162396}.} 
\end{proof}

\subsubsection{The adjoint equation}
\EO{In order to formulate} first order optimality conditions, we present a classical result: If $\mathbf{\bar{u}} \in \mathbf{U}_{ad}$ denotes a locally optimal control, then 
\cite[Lemma 4.18]{Troltzsch}
\begin{equation}
\label{eq:variational_inequality}
j'(\bar{\mathbf{u}}) ( \mathbf{u} - \bar{ \mathbf{u}}) \geq 0 \quad \forall  \mathbf{u} \in \mathbf{U}_{ad}.
\end{equation}
Here, $j'(\bar{\mathbf{u}})$ denotes the Gate\^aux derivative of  $j$ at $\bar{\mathbf{u}}$. \EO{To explore this inequality, we introduce the adjoint variable $\mathsf{p}$ as the unique solution to the \emph{adjoint equation}: Find $\mathsf{p} \in H_0^1(\Omega)$ such that}
\begin{equation}\label{eq:adj_eq}
(\nabla w,\nabla \mathsf{p})_{L^2(\Omega)} + \left(\tfrac{\partial a}{\partial y}(\cdot,\mathsf{y})\mathsf{p},w\right)_{L^2(\Omega)} = (\mathsf{y} - \mathsf{y}_d,w)_{L^2(\Omega)}
\quad
\forall w \in H_0^1(\Omega),
\quad 
\mathsf{y} = \mathcal{S} \mathbf{u}.
\end{equation} 

\EO{In the following results, we explore regularity properties for $\mathsf{p}$.}

\begin{proposition}[global regularity properties of $\mathsf{p}$]
\EO{Let $\Omega$ be an open and bounded domain with Lipschitz boundary. If $a$ is such that \textnormal{\ref{A1} and \ref{A2}} hold, then there exists $\mathtt{q}>4$ if $d=2$ and $\mathtt{q}>3$ if $d=3$ such that $\mathsf{p} \in H_0^1(\Omega) \cap W_{\scaleto{0}{4.5pt}}^{1,\mathtt{q}}(\Omega)$.} 
\label{pro:regularity_p}
\end{proposition}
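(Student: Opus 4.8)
The plan is to obtain $\mathsf{p}$ in $H_0^1(\Omega)$ first and then bootstrap the integrability of $\nabla\mathsf{p}$ up to the sharp exponent $2d/(d-1)$ imposed by the Lipschitz geometry of $\partial\Omega$; note that $2d/(d-1)$ equals $4$ when $d=2$ and $3$ when $d=3$, which is exactly the claim. To begin, I would record the integrability of the reaction coefficient. Since \ref{A1} and \ref{A2} hold and $\mathsf{y}=\mathcal{S}\mathbf{u}$ belongs to $W_0^{1,p}(\Omega)$ for every $p<d/(d-1)$ by Theorem \ref{thm:well_posedness_semilinear}, the Sobolev embedding $W_0^{1,p}(\Omega)\hookrightarrow L^{s}(\Omega)$ combined with the growth bound \eqref{eq:growing_a_derivative} shows that $b:=\tfrac{\partial a}{\partial y}(\cdot,\mathsf{y})$ is nonnegative and lies in $L^{q}(\Omega)$ with $q>d/2$. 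Hence the bilinear form on the left-hand side of \eqref{eq:adj_eq} is bounded and coercive on $H_0^1(\Omega)$ --- the reaction term only improves coercivity --- and, since the right-hand side datum $\mathsf{y}-\mathsf{y}_d$ lies in $L^2(\Omega)$, the Lax--Milgram lemma yields a unique $\mathsf{p}\in H_0^1(\Omega)$.

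The next step is an a priori $L^\infty$ bound. Because $\Omega$ is Lipschitz, $b\in L^{q}(\Omega)$ with $q>d/2$, $b\ge 0$, and $\mathsf{y}-\mathsf{y}_d\in L^2(\Omega)$, I would invoke \cite[Theorem 4.2]{MR192177} --- precisely as was done for the auxiliary variable $\mathfrak{z}$ in the proof of Theorem \ref{thm:eq:error_estimate_L2_semilinear} --- to deduce that $\mathsf{p}\in L^\infty(\Omega)$.

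With $\mathsf{p}\in L^\infty(\Omega)$ secured, I would rewrite \eqref{eq:adj_eq} as the Dirichlet problem $-\Delta\mathsf{p}=(\mathsf{y}-\mathsf{y}_d)-b\,\mathsf{p}$ in $\Omega$ with $\mathsf{p}=0$ on $\partial\Omega$ and examine the right-hand side: the datum $\mathsf{y}-\mathsf{y}_d$ sits in $L^2(\Omega)$, while $b\,\mathsf{p}\in L^{q}(\Omega)$ since $b\in L^{q}(\Omega)$ and $\mathsf{p}\in L^\infty(\Omega)$. I would then feed this into the $W^{1,\mathtt{q}}$-regularity theory of the Dirichlet Laplacian on bounded Lipschitz domains, namely that $(-\Delta)^{-1}$ maps $W^{-1,\mathtt{q}}(\Omega)$ boundedly into $W_0^{1,\mathtt{q}}(\Omega)$ for every $\mathtt{q}$ in an interval whose upper endpoint is $2d/(d-1)$, to conclude $\mathsf{p}\in W_0^{1,\mathtt{q}}(\Omega)$ with $\mathtt{q}>2d/(d-1)$, i.e., $\mathtt{q}>4$ for $d=2$ and $\mathtt{q}>3$ for $d=3$.

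The crux --- and the step I expect to demand the most care --- is verifying that the reaction term $b\,\mathsf{p}$ belongs to $W^{-1,\mathtt{q}}(\Omega)$ for $\mathtt{q}$ reaching past the Sobolev-continuity threshold $2d/(d-1)$. The datum $\mathsf{y}-\mathsf{y}_d\in L^2(\Omega)$ is benign (in two dimensions it belongs to $W^{-1,\mathtt{q}}(\Omega)$ for every finite $\mathtt{q}$), so the attainable exponent is governed by the integrability $q>d/2$ of $b$ through the embedding $L^{q}(\Omega)\hookrightarrow W^{-1,\mathtt{q}}(\Omega)$; tracking this embedding against the admissible range of the Lipschitz regularity estimate is exactly what pins the exponent at the claimed value $2d/(d-1)$.
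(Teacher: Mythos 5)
Your overall route is exactly the paper's: deduce $b:=\partial a/\partial y(\cdot,\mathsf{y})\in L^{q}(\Omega)$ with $q>d/2$ from \ref{A2} and the Sobolev embedding, obtain $\mathsf{p}\in L^{\infty}(\Omega)$ from \cite[Theorem 4.2]{MR192177}, and then treat \eqref{eq:adj_eq} as a Poisson problem whose datum is placed in $W^{-1,\mathtt{q}}(\Omega)$ so that the Lipschitz-domain regularity theory of \cite[Theorem 0.5]{MR1331981} (Jerison--Kenig) applies. However, as written, the final step cannot close. You invoke solvability of the Dirichlet Laplacian ``for every $\mathtt{q}$ in an interval whose upper endpoint is $2d/(d-1)$'' and then conclude $\mathsf{p}\in W_0^{1,\mathtt{q}}(\Omega)$ with $\mathtt{q}>2d/(d-1)$. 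If the solvability window really stopped at $2d/(d-1)$, you could at best reach every $\mathtt{q}<2d/(d-1)$, which does not prove the proposition: the statement demands \emph{some} $\mathtt{q}$ strictly larger than $4$ (resp.\ $3$). What rescues the argument --- and what the paper's citation actually provides --- is that the Jerison--Kenig window on a bounded Lipschitz domain strictly overshoots the endpoint: it is $(3/2-\epsilon,\,3+\epsilon)$ for $d= 3$ and $(4/3-\epsilon,\,4+\epsilon)$ for $d=2$, with $\epsilon=\epsilon(\Omega)>0$ depending on the Lipschitz character. The strict inequality in the conclusion lives entirely in that $\epsilon$; your write-up (also in the opening sentence, ``up to the sharp exponent'') consistently frames the target as reaching $2d/(d-1)$ rather than exceeding it.

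A second, smaller point: the step you yourself identify as the crux --- showing the right-hand side lies in $W^{-1,\mathtt{q}}(\Omega)$ for some $\mathtt{q}$ strictly past the threshold --- is where the paper's proof does its actual work (a duality computation with H\"older's inequality and the embedding $W^{1,\mathtt{p}}(\Omega)\hookrightarrow L^{\iota}(\Omega)$, $\iota<3$, carried out for $d=3$), and your claimed numerology is only correct in three dimensions: there $q>3/2$ gives $q'<3$, and $W_0^{1,\mathtt{q}'}(\Omega)\hookrightarrow L^{q'}(\Omega)$ indeed yields some $\mathtt{q}>3$. In two dimensions \ref{A2} only guarantees $q>1$, and the embedding $L^{q}(\Omega)\hookrightarrow W^{-1,\mathtt{q}}(\Omega)$ then reaches only $\mathtt{q}\leq 2q/(2-q)$, which is near $2$, not near $4$, when $q$ is near $1$; reaching past $4$ by this route requires $q>4/3$. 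So it is not true that tracking this embedding against the Lipschitz estimate ``pins the exponent at $2d/(d-1)$'' uniformly in $d$ --- a subtlety the paper itself sidesteps by proving only the case $d=3$ and declaring the two-dimensional analysis ``simpler.''
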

\begin{proof}
\EO{We proceed in three dimensions; the analysis in two dimensions is simpler. Since $\mathsf{y} \in W^{1,p}(\Omega)$, the Sobolev embedding $W^{1,p}(\Omega) \hookrightarrow L^{\iota}(\Omega)$, which holds for every $\iota<3$ \EOP{\cite[Theorem 4.12, Part I, Case C]{MR2424078}}, and assumption \ref{A2} allow us to deduce the existence of $q>d/2$ such that $\partial a/ \partial y (\cdot, \mathsf{y}) \in L^{q}(\Omega)$. On the other hand, we observe that $\mathsf{y} - \mathsf{y}_d \in L^2(\Omega)$. With these ingredients at hand, we thus invoke \cite[Theorem 4.2]{MR192177} to deduce that $\mathsf{p} \in L^{\infty}(\Omega)$.}
 \EO{Let us now define 
\begin{equation*}
\mathfrak{H}: H_0^1(\Omega) \rightarrow \mathbb{R},
\qquad
\mathfrak{H}(w) := (\mathsf{y} - \mathsf{y}_d,w)_{L^2(\Omega)} -  \left(\tfrac{\partial a}{\partial y}(\cdot,\mathsf{y})\mathsf{p},w\right)_{L^2(\Omega)}.
\end{equation*}
In what follows, we prove the existence of $\mathtt{q}>3$ such that $\mathfrak{H}$ belongs to $W^{-1,\mathtt{q}}(\Omega)$.
Let $\texttt{p}$ be the H\"older conjugate of $\texttt{q}$. Let us consider, at the moment, $\mathtt{p}$ such that $6/5 \leq \mathtt{p}<3$ ($3/2 < \mathtt{q} \leq 6$). Observe that
\[
 \| \mathsf{y} - \mathsf{y}_d \|_{W^{-1,\mathtt{q}}(\Omega)} = \sup_{v \in W_0^{1,\mathtt{p}}(\Omega) } \frac{\langle \mathsf{y} - \mathsf{y}_d,v  \rangle}{\| \nabla v\|_{L^\mathtt{p}(\Omega)}} \lesssim \| \mathsf{y} - \mathsf{y}_d \|_{L^2(\Omega)},
\]
as a consequence of the standard Sobolev embedding $W^{1,\mathtt{p}}(\Omega) \hookrightarrow L^2(\Omega)$ \EOP{\cite[Theorem 4.12, Part I, Case C]{MR2424078}.} To bound the term $\| \partial a/\partial y(\cdot,\mathsf{y})\mathsf{p} \|_{W^{-1,\mathtt{q}}(\Omega)}$, we utilize H\"older's inequality:
\[
 \left \| \tfrac{\partial a}{\partial y}(\cdot,\mathsf{y})\mathsf{p} \right \|_{W^{-1,\mathtt{q}}(\Omega)} \leq \sup_{v \in W_0^{1,\mathtt{p}}(\Omega) } 
 \left \| \tfrac{\partial a}{\partial y}(\cdot,\mathsf{y}) \right \|_{L^{\sigma}(\Omega)}
 \frac{ \| \mathsf{p} \|_{L^{\infty}(\Omega)} \| v \|_{L^{\kappa}(\Omega)}}{\| \nabla v\|_{L^\mathtt{p}(\Omega)}}, 
 \qquad \frac{1}{\sigma} + \frac{1}{\kappa} = 1.
\]
In view of the Sobolev embedding $W^{1,\mathtt{p}}(\Omega) \hookrightarrow L^{\iota}(\Omega)$, which holds for every $\iota<3$, and assumption \ref{A2}, we deduce the existence of $\mathtt{q}>3$ such that
\[
 \left \| \tfrac{\partial a}{\partial y}(\cdot,\mathsf{y})\mathsf{p} \right \|_{W^{-1,\mathtt{q}}(\Omega)} 
 \lesssim
 \left \| \tfrac{\partial a}{\partial y}(\cdot,\mathsf{y}) \right \|_{L^{q}(\Omega)}\| \mathsf{p} \|_{L^{\infty}(\Omega)}.
\]
We have thus proved the existence of $\mathtt{q}>3$ such that $\mathfrak{H} \in W^{-1,\mathtt{q}}(\Omega)$. The desired regularity result thus follows an application of \cite[Theorem 0.5]{MR1331981}.}
\end{proof}

\begin{proposition}[local regularity properties]
\EO{Let $\Omega_1 \Subset \Omega_0 \Subset \Omega$, with $\Omega_0$ smooth. If, in addition to the conditions of Proposition \ref{pro:regularity_p}, we assume that $\Omega$ is convex, $\partial a/\partial y(\cdot,y) \in L^2(\Omega)$, for every $y \in \mathbb{R}$, and $\mathsf{y}_d, \partial a/\partial y(\cdot,y) \in L^t(\Omega_0)$, for every $y \in \mathbb{R}$, where $t < \infty$ if $d = 2$ and $t < 3$ if $d=3$, then $\mathsf{p} \in W^{2,t}(\Omega_1)$.}
\label{pro:local_regularity}
\end{proposition}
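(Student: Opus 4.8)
The plan is to recast the adjoint equation \eqref{eq:adj_eq} as an inhomogeneous Poisson problem on $\Omega$ and then invoke interior elliptic regularity. Testing \eqref{eq:adj_eq} against $w \in C_c^{\infty}(\Omega)$ shows that $\mathsf{p}$ solves, in the distributional sense,
\[
-\Delta \mathsf{p} = \mathfrak{g} := \mathsf{y} - \mathsf{y}_d - \tfrac{\partial a}{\partial y}(\cdot,\mathsf{y})\,\mathsf{p} \quad \text{in } \Omega.
\]
Since $\Omega_1 \Subset \Omega_0$, the desired bound $\mathsf{p} \in W^{2,t}(\Omega_1)$ is purely interior; hence it suffices to prove that $\mathfrak{g} \in L^t(\Omega_0)$ and then apply the interior Calder\'on--Zygmund estimate for the Laplacian on $\Omega_0$, restricting the conclusion to $\Omega_1$.

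The core of the argument is therefore the membership $\mathfrak{g} \in L^t(\Omega_0)$, which I would establish term by term. First, Proposition \ref{pro:regularity_p} gives $\mathsf{p} \in W_0^{1,\mathtt{q}}(\Omega)$ with $\mathtt{q}>d$, so that $\mathsf{p} \in L^{\infty}(\Omega)$. Next, since $\mathsf{y} \in W_0^{1,p}(\Omega)$ for every $p<d/(d-1)$, the Sobolev embedding yields $\mathsf{y} \in L^t(\Omega)$ for every $t$ in the stated range ($t<\infty$ if $d=2$, $t<3$ if $d=3$); combined with the hypothesis $\mathsf{y}_d \in L^t(\Omega_0)$ this controls $\mathsf{y}-\mathsf{y}_d$ in $L^t(\Omega_0)$. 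The remaining, and most delicate, term is $\tfrac{\partial a}{\partial y}(\cdot,\mathsf{y})\mathsf{p}$: since $\mathsf{p} \in L^{\infty}(\Omega)$, it is enough to show $\tfrac{\partial a}{\partial y}(\cdot,\mathsf{y}) \in L^t(\Omega_0)$. For this I would invoke the growth bound \eqref{eq:growing_a_derivative} of \ref{A2} together with the local hypothesis $\tfrac{\partial a}{\partial y}(\cdot,y)\in L^t(\Omega_0)$ (which controls the $y$-independent contribution) and the integrability of $\mathsf{y}$ just obtained, so that the Nemytskii operator $\mathsf{y}\mapsto \tfrac{\partial a}{\partial y}(\cdot,\mathsf{y})$ maps into $L^t(\Omega_0)$.

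The hard part will be exactly this composition step in three dimensions. The growth exponent $r<2$ in \ref{A2} forces the requirement $|\mathsf{y}|^r \in L^t(\Omega_0)$, i.e.\ $\mathsf{y} \in L^{rt}(\Omega_0)$, whereas the Sobolev embedding only supplies $\mathsf{y} \in L^s$ for $s<3$; thus the argument hinges on the compatibility of $r$, $t$, and the (limited) integrability of $\mathsf{y}$ near the support of the sources, which is precisely what the local hypotheses on $\partial a/\partial y$ and $\mathsf{y}_d$ are designed to accommodate. Finally, convexity of $\Omega$ secures the baseline global regularity $\mathsf{p}\in H^2(\Omega)\cap H_0^1(\Omega)$ via $\mathfrak{g}\in L^2(\Omega)$ --- using the hypothesis $\tfrac{\partial a}{\partial y}(\cdot,y)\in L^2(\Omega)$ --- which provides the starting point $\mathsf{p}\in W^{2,2}_{\mathrm{loc}}(\Omega)$ for the interior $W^{2,t}$ bootstrap. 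Once $\mathfrak{g}\in L^t(\Omega_0)$ is in hand, interior elliptic regularity upgrades this to $\mathsf{p}\in W^{2,t}(\Omega_1)$, completing the proof.
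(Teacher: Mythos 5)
Your argument is correct and takes essentially the same route as the paper: there, too, the adjoint equation \eqref{eq:adj_eq} is regarded as a Poisson problem with datum $\mathsf{y} - \mathsf{y}_d - \tfrac{\partial a}{\partial y}(\cdot,\mathsf{y})\mathsf{p}$, the bound $\mathsf{p} \in L^{\infty}(\Omega)$ from Proposition \ref{pro:regularity_p} places this datum in $L^{2}(\Omega)$ and $L^{t}(\Omega_0)$, and the conclusion follows from the local estimate \eqref{eq:regularity_p_W2t} of \cite[Lemma 4.2]{MR3973329}, which packages precisely your interior Calder\'on--Zygmund step (and additionally tracks the constant $C_t \sim Ct$ as $t \uparrow \infty$, a feature not needed for the membership claimed here but exploited later in Theorem \ref{thm:error_estimate_adj}). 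The Nemytskii composition subtlety you flag in three dimensions is passed over silently in the paper's proof as well, so your term-by-term verification of the datum's integrability is, if anything, more explicit than the original.
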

\begin{proof}
\EO{Since $\mathsf{p} \in L^{\infty}(\Omega)$, we have that $\mathsf{y} - \mathsf{y}_d - \tfrac{\partial a}{\partial y}(\cdot,\mathsf{y})\mathsf{p} \in L^{2}(\Omega)$ and $\mathsf{y} - \mathsf{y}_d - \tfrac{\partial a}{\partial y}(\cdot,\mathsf{y})\mathsf{p} \in L^{t}(\Omega_0)$. The desired result thus follows from}
 \cite[Lemma 4.2]{MR3973329}:
\begin{equation}
\| \mathsf{p} \|_{W^{2,t}(\Omega_1)} \leq C_{t} \left( \left \| \mathsf{y} - \mathsf{y}_d - \tfrac{\partial a}{\partial y}(\cdot,\mathsf{y})\mathsf{p} \right \|_{L^t(\Omega_0)} 
+
 \left \| \mathsf{y} - \mathsf{y}_d - \tfrac{\partial a}{\partial y}(\cdot,\mathsf{y})\mathsf{p} \right \|_{L^2(\Omega)}\right),
\label{eq:regularity_p_W2t}
\end{equation}
where $C_t$ behaves as $Ct$, with $C>0$, as $t \uparrow \infty$. \EO{This concludes the proof.}
\end{proof}

\subsection{First order optimality conditions}
\label{sec:first_order_o_c}
We are now in position to present first order necessary optimality conditions.

\begin{theorem}[first order optimality conditions]
\EO{Let $\Omega$ be an open and bounded domain with Lipschitz boundary.  Let $a$ be such that \textnormal{\ref{A1} and \ref{A2}} hold. Then, every locally optimal control $\bar{\mathbf{u}}= \{ \bar{\mathsf{u}}_{\mathsf{z}} \}_{\mathsf{z} \in \mathcal{D}} \in \mathbf{U}_{ad}$ for problem \eqref{eq:min}--\eqref{eq:state_equation} satisfies the variational inequality}
\begin{equation}
\sum_{\mathsf{z} \in \mathcal{D}} (\bar{\mathsf{p}}(\mathsf{z}) + \alpha \bar{\mathsf{u}}_{\mathsf{z}}) (\mathsf{u}_{\mathsf{z}} -\bar{\mathsf{u}}_{\mathsf{z}}) \geq 0 \quad \forall \mathbf{u} = \{ \mathsf{u}_{\mathsf{z}} \}_{\mathsf{z} \in \mathcal{D}} \in \mathbf{U}_{ad},
\label{eq:first_order_optimality_condition}
\end{equation}
where the optimal adjoint state $\bar{\mathsf{p}}$ solves \eqref{eq:adj_eq} with $\mathsf{y}$ replaced by $\bar{\mathsf{y}} = \mathcal{S} \bar{\mathbf{u}}$.
\label{thm:first_order_op_c}
\end{theorem}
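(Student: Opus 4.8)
The plan is to start from the abstract variational inequality \eqref{eq:variational_inequality}, namely $j'(\bar{\mathbf{u}})(\mathbf{u}-\bar{\mathbf{u}}) \geq 0$ for every $\mathbf{u} \in \mathbf{U}_{ad}$, and to rewrite the action of $j'(\bar{\mathbf{u}})$ in the explicit form appearing in \eqref{eq:first_order_optimality_condition} by means of the adjoint state. First I would differentiate the reduced cost $j(\mathbf{u}) = J(\mathcal{S}\mathbf{u},\mathbf{u})$. Since $\mathcal{S}$ is of class $C^1$ by Theorem \ref{thm:first_order_diff} and $J$, defined in \eqref{def:cost_func}, is smooth, the chain rule gives, for an arbitrary direction $\mathbf{v} = \{\mathsf{v}_{\mathsf{z}}\}_{\mathsf{z}\in\mathcal{D}} \in \mathbb{R}^{\ell}$,
\[
j'(\bar{\mathbf{u}})\mathbf{v} = (\bar{\mathsf{y}} - \mathsf{y}_d, \phi_{\mathbf{v}})_{L^2(\Omega)} + \alpha \sum_{\mathsf{z}\in\mathcal{D}} \bar{\mathsf{u}}_{\mathsf{z}} \mathsf{v}_{\mathsf{z}},
\]
where $\bar{\mathsf{y}} = \mathcal{S}\bar{\mathbf{u}}$ and $\phi_{\mathbf{v}} = \mathcal{S}'(\bar{\mathbf{u}})\mathbf{v} \in W_0^{1,p}(\Omega)$ solves the linearized equation \eqref{eq:aux_adjoint} with $\mathsf{y} = \bar{\mathsf{y}}$.

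The heart of the argument is to eliminate $\phi_{\mathbf{v}}$ in favour of the adjoint state $\bar{\mathsf{p}}$, i.e.\ to show that $(\bar{\mathsf{y}} - \mathsf{y}_d, \phi_{\mathbf{v}})_{L^2(\Omega)} = \sum_{\mathsf{z}\in\mathcal{D}} \bar{\mathsf{p}}(\mathsf{z})\mathsf{v}_{\mathsf{z}}$. To this end I would test the linearized equation \eqref{eq:aux_adjoint} with $v = \bar{\mathsf{p}}$. This choice is legitimate precisely because of Proposition \ref{pro:regularity_p}: it guarantees $\bar{\mathsf{p}} \in W_0^{1,\mathtt{q}}(\Omega)$ with $\mathtt{q} > d$, whence $\bar{\mathsf{p}} \in W_0^{1,p'}(\Omega)$ for a suitable $p' \in (d,\mathtt{q}]$, and moreover $\bar{\mathsf{p}} \in C(\bar\Omega)$ so that each point value $\bar{\mathsf{p}}(\mathsf{z})$ is well defined. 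This yields
\[
(\nabla\phi_{\mathbf{v}}, \nabla\bar{\mathsf{p}})_{L^2(\Omega)} + \left(\tfrac{\partial a}{\partial y}(\cdot,\bar{\mathsf{y}})\phi_{\mathbf{v}}, \bar{\mathsf{p}}\right)_{L^2(\Omega)} = \sum_{\mathsf{z}\in\mathcal{D}} \mathsf{v}_{\mathsf{z}} \bar{\mathsf{p}}(\mathsf{z}).
\]

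It then remains to identify the left-hand side with $(\bar{\mathsf{y}} - \mathsf{y}_d, \phi_{\mathbf{v}})_{L^2(\Omega)}$ through the adjoint equation \eqref{eq:adj_eq}. The hard part will be exactly this step: \eqref{eq:adj_eq} is posed with test functions in $H_0^1(\Omega)$, whereas $\phi_{\mathbf{v}}$ only belongs to $W_0^{1,p}(\Omega)$ with $p < d/(d-1) \leq 2$, so $\phi_{\mathbf{v}}$ need not lie in $H_0^1(\Omega)$ and cannot be inserted directly. I would resolve this by a density argument. Using $\bar{\mathsf{p}} \in W_0^{1,\mathtt{q}}(\Omega) \cap L^{\infty}(\Omega)$, the bound $\tfrac{\partial a}{\partial y}(\cdot,\bar{\mathsf{y}}) \in L^q(\Omega)$ with $q > d/2$, and $\bar{\mathsf{y}} - \mathsf{y}_d \in L^2(\Omega)$, one checks via H\"older's inequality and the Sobolev embedding $W_0^{1,p}(\Omega)\hookrightarrow L^{p^{*}}(\Omega)$ that, for $p'$ chosen slightly larger than $d$ (hence $p$ close to $d/(d-1)$ and $p^{*}$ large), each of the three terms in \eqref{eq:adj_eq} is continuous with respect to $\|\cdot\|_{W^{1,p}(\Omega)}$. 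Since $C_c^{\infty}(\Omega)$ is dense in $W_0^{1,p}(\Omega)$ and contained in $H_0^1(\Omega)$, the identity \eqref{eq:adj_eq} extends by continuity to every test function in $W_0^{1,p}(\Omega)$; taking $w = \phi_{\mathbf{v}}$ and exploiting the symmetry of the underlying bilinear form, the left-hand side of the last display equals $(\bar{\mathsf{y}} - \mathsf{y}_d, \phi_{\mathbf{v}})_{L^2(\Omega)}$. Combining the two identities gives $(\bar{\mathsf{y}} - \mathsf{y}_d, \phi_{\mathbf{v}})_{L^2(\Omega)} = \sum_{\mathsf{z}\in\mathcal{D}} \bar{\mathsf{p}}(\mathsf{z})\mathsf{v}_{\mathsf{z}}$, so that $j'(\bar{\mathbf{u}})\mathbf{v} = \sum_{\mathsf{z}\in\mathcal{D}} (\bar{\mathsf{p}}(\mathsf{z}) + \alpha\bar{\mathsf{u}}_{\mathsf{z}})\mathsf{v}_{\mathsf{z}}$. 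Choosing $\mathbf{v} = \mathbf{u} - \bar{\mathbf{u}}$ and inserting this into \eqref{eq:variational_inequality} delivers \eqref{eq:first_order_optimality_condition}, which concludes the proof.
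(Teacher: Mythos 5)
Your proposal is correct and follows essentially the same route as the paper's proof: you rewrite $j'(\bar{\mathbf{u}})$ via the chain rule, test the linearized equation \eqref{eq:aux_adjoint} with $v=\bar{\mathsf{p}}$ (justified, as in the paper, by Proposition \ref{pro:regularity_p}), and then extend the adjoint equation \eqref{eq:adj_eq} from $H_0^1(\Omega)$ to test functions in $W_0^{1,p}(\Omega)$ by a density argument resting on the same ingredients --- $\bar{\mathsf{p}} \in L^{\infty}(\Omega) \cap W_0^{1,\mathtt{q}}(\Omega)$ with $\mathtt{q}>d$, $\partial a/\partial y(\cdot,\bar{\mathsf{y}}) \in L^{q}(\Omega)$ with $q>d/2$, and the Sobolev embedding of $W_0^{1,p}(\Omega)$ with $p$ close to $d/(d-1)$. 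The only cosmetic difference is that the paper checks convergence term by term along a specific sequence $\eta_n \rightarrow \chi$ in $C_0^{\infty}(\Omega)$, whereas you phrase the same estimates as continuity of the three forms in the $\|\cdot\|_{W^{1,p}(\Omega)}$-norm.
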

\begin{proof}
\EO{We begin the proof by observing that}
the variational inequality \eqref{eq:variational_inequality} can be rewritten as follows:
\[
0 \leq j'(\bar{\mathbf{u}}) (\mathbf{u}-\bar{\mathbf{u}})= (\mathcal{S}\bar{\mathbf{u}} - \mathsf{y}_d,\mathcal{S}'(\bar{\mathbf{u}})(\mathbf{u}-\bar{\mathbf{u}}))_{L^2(\Omega)}
+
\sum_{\mathsf{z} \in \mathcal{D}} \alpha \bar{\mathsf{u}}_{\mathsf{z}} (\mathsf{u}_{\mathsf{z}} -\bar{\mathsf{u}}_{\mathsf{z}}).
\]
Since the second term on the right hand side of the previous expression is already present in the desired variational inequality \eqref{eq:first_order_optimality_condition}, we concentrate on the first term. 

Define $\chi:= \mathcal{S}'(\bar{\mathbf{u}})(\mathbf{u}-\bar{\mathbf{u}})$ and observe that $\chi$ solves \eqref{eq:aux_adjoint} with $\mathsf{y}$ and \EO{$\mathsf{v}_{\mathsf{z}}$} replaced by $\bar{\mathsf{y}} = \mathcal{S}\bar{\mathbf{u}}$ and $\mathsf{u}_{\mathsf{z}} - \bar{\mathsf{u}}_{\mathsf{z}}$, respectively. Since there exists $\mathtt{q}>d$ such that $\bar{\mathsf{p}} \in W_{\scaleto{0}{4pt}}^{\scaleto{1}{4pt},\scaleto{\mathtt{q}}{4pt}}(\Omega)$, we are allowed to set $v = \bar{\mathsf{p}}$ in the problem that $\chi$ solves \EO{to obtain}
\begin{equation}
(\nabla \chi, \nabla \bar{\mathsf{p}} )_{L^2(\Omega)} +\left(\tfrac{\partial a}{\partial y}(\cdot,\bar{\mathsf{y}})\chi, \bar{\mathsf{p}}  \right)_{L^2(\Omega)}=
\sum_{\mathsf{z} \in \mathcal{D}} \bar{\mathsf{p}}(\mathsf{z})(\mathsf{u}_{\mathsf{z}} - \bar{\mathsf{u}}_{\mathsf{z}}).
\label{eq:aux_aux_step_first_order}
\end{equation}

Now, we would like to set $w = \chi$ in problem \eqref{eq:adj_eq} to conclude that
\begin{equation}
(\nabla \chi,\nabla \bar{\mathsf{p}})_{L^2(\Omega)} + \left(\tfrac{\partial a}{\partial y}(\cdot,\bar{\mathsf{y}})\bar{\mathsf{p}},\chi\right)_{L^2(\Omega)} = (\bar{\mathsf{y}} - \mathsf{y}_d,\chi)_{L^2(\Omega)}.
\label{eq:aux_step_first_order}
\end{equation}
Unfortunately, $\chi \notin H_0^1(\Omega)$ and thus we need to justify \eqref{eq:aux_step_first_order} with a different argument. Let $\{ \eta_n \}_{n \in \mathbb{N}}$ be \EO{a sequence in $C_0^{\infty}(\Omega)$ such that $\eta_n  \rightarrow \chi$ in $W_{\scaleto{0}{4pt}}^{\scaleto{1}{4pt},\scaleto{p}{4pt}}(\Omega)$}, as $n \uparrow \infty$, for $p<d/(d-1)$, \EO{with $p$ being arbitrarily close $d/(d-1)$.} Setting $w = \eta_n$ in \eqref{eq:adj_eq} yields
\[
(\nabla  \eta_n,\nabla \bar{\mathsf{p}})_{L^2(\Omega)} + \left(\tfrac{\partial a}{\partial y}(\cdot,\bar{\mathsf{y}})\bar{\mathsf{p}}, \eta_n \right)_{L^2(\Omega)} = (\bar{\mathsf{y}} - \mathsf{y}_d, \eta_n)_{L^2(\Omega)}.
\]
Observe that $| (\nabla (\chi -\eta_n),\nabla \bar{\mathsf{p}})_{L^2(\Omega)} 
 | \leq \| \nabla(\chi -\eta_n ) \|_{L^{p}(\Omega)} \| \nabla \bar{\mathsf{p}}\|_{L^{p'}(\Omega)} \rightarrow 0$ as $n \uparrow \infty$; $p'$ being the H\"older conjugate of $p$. Similarly, $| (\bar{\mathsf{y}} - \mathsf{y}_d, \eta_n)_{L^2(\Omega)} -  (\bar{\mathsf{y}} - \mathsf{y}_d,\chi)_{L^2(\Omega)}| \rightarrow 0$ as $n \uparrow \infty$. It thus suffices to analyze 
\begin{equation*}
\mathsf{I}_n:=
\left| \left(\tfrac{\partial a}{\partial y}(\cdot,\bar{\mathsf{y}})\bar{\mathsf{p}},\chi\right)_{L^2(\Omega)} - \left(\tfrac{\partial a}{\partial y}(\cdot,\bar{\mathsf{y}})\bar{\mathsf{p}}, \eta_n \right)_{L^2(\Omega)} \right| 
\leq \| \chi - \eta_n \|_{L^\mathfrak{q}(\Omega)} 
\EO{\| \bar{\mathsf{p}} \|_{L^{\infty}(\Omega)}}
\left\| \tfrac{\partial a}{\partial y}(\cdot,\bar{\mathsf{y}}) \right\|_{L^{\mathfrak{p}}(\Omega)},
\end{equation*}
where $\mathfrak{q} \in (1,\infty)$ is such that $\mathfrak{q} < \infty$ if $d=2$ and $\mathfrak{q} < 3$ if $d=3$ and $\mathfrak{p}$ is such that $\mathfrak{p}^{-1} + \mathfrak{q}^{-1} = 1$. \EO{The fact that $\bar{\mathsf{p}} \in L^{\infty}(\Omega)$ follows from the results of Proposition \ref{pro:regularity_p}.} 
\EO{On the other hand, we observe that, in view of \ref{A2}, there exists} $q > d/2$ such that $\tfrac{\partial a}{\partial y}(\cdot,\bar{\mathsf{y}}) \in L^{q}(\Omega)$. We thus utilize that 
\[
\| \nabla(\chi - \eta_n) \|_{L^{p}(\Omega)} \rightarrow 0 \implies
\| \chi - \eta_n \|_{L^{\mathfrak{q}}(\Omega)} \rightarrow 0,
\qquad n \uparrow \infty,
\]
to conclude that $\mathsf{I}_n \rightarrow 0$ as $n \uparrow \infty$.

Finally, we invoke \eqref{eq:aux_aux_step_first_order} and \eqref{eq:aux_step_first_order} to \EO{obtain the relation} $(\bar{\mathsf{y}} - \mathsf{y}_d,\chi)_{L^2(\Omega)} = \sum_{\mathsf{z} \in \mathcal{D}} \bar{\mathsf{p}}(\mathsf{z})(\mathsf{u}_{\mathsf{z}} - \bar{\mathsf{u}}_{\mathsf{z}})$. This concludes the proof.
\end{proof}

We \EO{conclude the} section with the following projection formula: If $\bar{\mathbf{u}} = \{ \bar{\mathsf{u}}_{\mathsf{z}} \}_{\mathsf{z} \in \mathcal{D}}  \in \mathbf{U}_{ad}$ denotes a local minimizer of problem \eqref{eq:min}--\eqref{eq:state_equation}, then, for every $\mathsf{z} \in \mathcal{D}$, we have \EO{the projection formula}
\begin{equation}\label{eq:proj_formula}
\bar{\mathsf{u}}_\mathsf{z} := \Pi_{[\mathsf{a}_{\mathsf{z}},\mathsf{b}_{\mathsf{z}}]}\left( - \alpha^{-1} \bar{\mathsf{p}} (\mathsf{z}) \right),
\end{equation}
where, for $t \in \mathbb{R}$, $\Pi_{[\mathsf{a}_{\mathsf{z}},\mathsf{b}_{\mathsf{z}}]}(t):= \max \{ \mathsf{a}_{\mathsf{z}}, \min \{ \mathsf{b}_{\mathsf{z}}, t \} \}$.


\subsection{Second order optimality conditions}
\label{sec:second_order_o_c}
In this section, \EO{we provide} necessary and sufficient second order optimality conditions for \eqref{eq:min}--\eqref{eq:state_equation}.

\subsubsection{Second order differentiability \EO{of $j$}}
\EO{Before providing second order optimality conditions, we analyze second order differentiability properties for the reduced cost functional} $j$.

\begin{proposition}[second order differentiability of the reduced cost functional $j$]
\label{thm:second_order_diff_j}
\EO{Let $\Omega$ be an open and bounded domain with Lipschitz boundary. If $a$ satisfies \textnormal{\ref{A1}, \ref{A2}, and \ref{A3}}, then the reduced cost functional $j: \mathbb{R}^{\ell} \rightarrow \mathbb{R}$ is of class $C^2$. In addition, for $\mathbf{u}, \mathbf{v}, \mathbf{w} \in \mathbb{R}^{\ell}$, we have}
\begin{equation}\label{eq:second_order_j}
j''(\mathbf{u})(\mathbf{v},\mathbf{w}) 
= 
\int_{\Omega} \phi_{\mathbf{v}} \phi_{\mathbf{w}} \mathrm{d}x 
+
\sum_{\mathsf{z} \in \mathcal{D}} \alpha \mathsf{v}_{\mathsf{z}}\mathsf{w}_{\mathsf{z}} 
- 
\int_{\Omega} \mathsf{p} \frac{\partial^2 a}{\partial y^2}(x,\mathsf{y}) \phi_{\mathbf{v}} \phi_{\mathbf{w}} \mathrm{d}x.
\end{equation}
\EO{Here, $\mathsf{y} = \mathcal{S}\mathbf{u}$, $\phi_{\mathbf{v}} = \mathcal{S}'(\mathbf{u})\mathbf{v}$, and $\phi_{\mathbf{v}} = \mathcal{S}'(\mathbf{u})\mathbf{w}$.}
\end{proposition}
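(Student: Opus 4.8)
The plan is to obtain $j$ as the composition of the smooth quadratic functional $J$ with the control-to-state map $\mathcal{S}$, and then to recast the resulting second derivative by means of the adjoint state. First I would invoke Theorem \ref{thm:first_order_diff}, which under \ref{A1}, \ref{A2}, and \ref{A3} guarantees that $\mathcal{S}:\mathbb{R}^{\ell}\to W_0^{1,p}(\Omega)$ is of class $C^2$ for $p<d/(d-1)$. Since $J$ is a bounded quadratic form, hence $C^{\infty}$, the chain rule immediately yields that $j=J(\mathcal{S}\,\cdot\,,\cdot)$ is $C^2$ and that, writing $\mathsf{y}=\mathcal{S}\mathbf{u}$, $\phi_{\mathbf{v}}=\mathcal{S}'(\mathbf{u})\mathbf{v}$, $\phi_{\mathbf{w}}=\mathcal{S}'(\mathbf{u})\mathbf{w}$, and $\varphi=\mathcal{S}''(\mathbf{u})(\mathbf{v},\mathbf{w})$,
\[
j''(\mathbf{u})(\mathbf{v},\mathbf{w})=\int_{\Omega}\phi_{\mathbf{v}}\phi_{\mathbf{w}}\,\mathrm{d}x+\alpha\sum_{\mathsf{z}\in\mathcal{D}}\mathsf{v}_{\mathsf{z}}\mathsf{w}_{\mathsf{z}}+(\mathsf{y}-\mathsf{y}_d,\varphi)_{L^2(\Omega)}.
\]
It then remains only to rewrite the last term in the form appearing in \eqref{eq:second_order_j}; note that this pairing is well defined because $\mathsf{y}-\mathsf{y}_d\in L^2(\Omega)$ and $\varphi\in W_0^{1,p}(\Omega)\hookrightarrow L^2(\Omega)$ for $p$ close to $d/(d-1)$.

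The key identity I would establish is $(\mathsf{y}-\mathsf{y}_d,\varphi)_{L^2(\Omega)}=-\int_{\Omega}\mathsf{p}\,\tfrac{\partial^2 a}{\partial y^2}(x,\mathsf{y})\phi_{\mathbf{v}}\phi_{\mathbf{w}}\,\mathrm{d}x$, obtained by pairing the equation \eqref{eq:aux_second_adjoint_new} solved by $\varphi$ against the adjoint state $\mathsf{p}$, and the adjoint equation \eqref{eq:adj_eq} against $\varphi$. Testing \eqref{eq:aux_second_adjoint_new} with $v=\mathsf{p}$ is legitimate: Proposition \ref{pro:regularity_p} provides $\mathtt{q}>d$ with $\mathsf{p}\in W_0^{1,\mathtt{q}}(\Omega)\subset W_0^{1,p'}(\Omega)$ for a suitable conjugate exponent $p'\in(d,\mathtt{q}]$, and this gives
\[
(\nabla\varphi,\nabla\mathsf{p})_{L^2(\Omega)}+\left(\tfrac{\partial a}{\partial y}(\cdot,\mathsf{y})\varphi,\mathsf{p}\right)_{L^2(\Omega)}=-\left(\tfrac{\partial^2 a}{\partial y^2}(\cdot,\mathsf{y})\phi_{\mathbf{v}}\phi_{\mathbf{w}},\mathsf{p}\right)_{L^2(\Omega)}.
\]
Because the bilinear form on the left is symmetric in its two arguments (the zeroth-order coefficient $\partial a/\partial y(\cdot,\mathsf{y})$ enters as a common multiplier), it coincides with the left-hand side of \eqref{eq:adj_eq} evaluated at $w=\varphi$; hence its value equals the right-hand side $(\mathsf{y}-\mathsf{y}_d,\varphi)_{L^2(\Omega)}$. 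Combining the two identities yields the claimed expression, and substituting it into the formula for $j''$ produces precisely \eqref{eq:second_order_j}.

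The main obstacle is that $\varphi\in W_0^{1,p}(\Omega)$ with $p<d/(d-1)<2$, so $\varphi\notin H_0^1(\Omega)$ and is therefore not directly admissible as the test function $w$ in \eqref{eq:adj_eq}. To circumvent this I would argue by density exactly as in the proof of Theorem \ref{thm:first_order_op_c}: choose $\eta_n\in C_0^{\infty}(\Omega)$ with $\eta_n\to\varphi$ in $W_0^{1,p}(\Omega)$, insert $w=\eta_n$ in \eqref{eq:adj_eq}, and pass to the limit. The gradient term converges by H\"older's inequality together with $\mathsf{p}\in W_0^{1,p'}(\Omega)$, the right-hand side converges since $\mathsf{y}-\mathsf{y}_d\in L^2(\Omega)$, and the zeroth-order term is controlled using $\mathsf{p}\in L^{\infty}(\Omega)$ from Proposition \ref{pro:regularity_p}, the integrability $\partial a/\partial y(\cdot,\mathsf{y})\in L^q(\Omega)$ with $q>d/2$ granted by \ref{A2}, and the embedding $W_0^{1,p}(\Omega)\hookrightarrow L^{\mathfrak{q}}(\Omega)$ for the appropriate conjugate exponent $\mathfrak{q}$, reproducing the estimate $\mathsf{I}_n\to0$ already carried out there. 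One should also record, using \ref{A3}, that the product $\mathsf{p}\,\tfrac{\partial^2 a}{\partial y^2}(\cdot,\mathsf{y})\phi_{\mathbf{v}}\phi_{\mathbf{w}}$ lies in $L^1(\Omega)$, so that every term above is well defined; this follows from $\mathsf{p}\in L^{\infty}(\Omega)$, the growth bound \eqref{eq:growing_a_second_derivative}, and $\phi_{\mathbf{v}},\phi_{\mathbf{w}}\in W_0^{1,p}(\Omega)\hookrightarrow L^{\iota}(\Omega)$ over the admissible range of $\iota$.
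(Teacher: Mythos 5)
Your proposal is correct and takes essentially the same route as the paper: second order differentiability of $j$ via the $C^2$ property of $\mathcal{S}$ from Theorem \ref{thm:first_order_diff} and the chain rule, followed by the key identity $(\mathsf{y}-\mathsf{y}_d,\varphi)_{L^2(\Omega)} = -\int_{\Omega}\mathsf{p}\,\tfrac{\partial^2 a}{\partial y^2}(x,\mathsf{y})\phi_{\mathbf{v}}\phi_{\mathbf{w}}\,\mathrm{d}x$, established by testing \eqref{eq:aux_second_adjoint_new} with $\mathsf{p}\in W_0^{1,\mathtt{q}}(\Omega)$, $\mathtt{q}>d$, and justifying $w=\varphi$ in \eqref{eq:adj_eq} through the same density argument invoked in the proof of Theorem \ref{thm:first_order_op_c}. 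You simply spell out the details the paper delegates to \cite[Section 4.10]{Troltzsch} and to that earlier proof, including the $L^1(\Omega)$ integrability of $\mathsf{p}\,\tfrac{\partial^2 a}{\partial y^2}(\cdot,\mathsf{y})\phi_{\mathbf{v}}\phi_{\mathbf{w}}$, which the paper also records.
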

\begin{proof}
Since Theorem \ref{thm:first_order_diff} guarantees that $\mathcal{S}: \EO{\mathbb{R}^{\ell}} \rightarrow W_0^{1,p}(\Omega)$ is twice continuously Fr\'echet differentiable, it is immediate that $j$ is of class $C^2$. The identity \eqref{eq:second_order_j} follows from computations similar to those in \EO{\cite[Section 4.10, page 241]{Troltzsch}} upon noticing that
\[
- \int_{\Omega} \mathsf{p} \frac{\partial^2 a}{\partial y^2}(x,\mathsf{y}) \phi_{\mathbf{v}} \phi_{\mathbf{w}} \mathrm{d}x = \int_{\Omega} (\mathsf{y} - \mathsf{y}_d) \varphi \mathrm{d}x.
\]
\EO{Here, $\varphi$ denotes the solution to \eqref{eq:aux_second_adjoint_new}.}
This identity follows from the density argument elaborated in the proof of Theorem \ref{thm:first_order_op_c} combined with the fact that $a$ satisfies \ref{A1}--\ref{A3}. \EO{We observe that, in view of \ref{A2}, the results of Proposition \ref{pro:regularity_p} allow us to conclude that $\mathsf{p} \in L^{\infty}(\Omega)$ while assumption \ref{A3} reveals that $\partial^2 a/\partial y^2(\cdot,\mathsf{y}) \phi_{\mathbf{v}} \phi_{\mathbf{w}} \in L^1(\Omega)$. As a result, $\mathsf{p}\partial^2 a/\partial y^2(\cdot,\mathsf{y}) \phi_{\mathbf{v}} \phi_{\mathbf{w}} \in L^1(\Omega)$} 
\end{proof}

\subsubsection{Second order necessary optimality conditions}
 
\EO{Before presenting necessary and sufficient second order optimality conditions, we introduce a few basic ingredients. Let us define}
\begin{equation}\label{def:deriv_j}
\Psi:= \{ \psi_{\mathsf{z}} \}_{\mathsf{z}\in\mathcal{D}} \in\mathbb{R}^{\ell},
\qquad
\psi_{\mathsf{z}}:=\bar{\mathsf{p}}(\mathsf{z})+\alpha \bar{\mathsf{u}}_{\mathsf{z}}.
\end{equation}
\EO{Let us also introduce the cone of critical directions at $\bar{\mathbf{u}} \in \mathbb{U}_{ad}$:}
\begin{equation}
\mathbf{C}_{\bar{\mathbf{u}}}:=\left \{\mathbf{v}=\{ \mathsf{v}_{\mathsf{z}} \}_{\mathsf{z}\in\mathcal{D}} \in \mathbb{R}^{\ell}
 ~\text{satisfying}~\eqref{eq:sign_cond} 
~\text{and},~\text{for}~ \mathsf{z}\in\mathcal{D},~
\mathsf{v}_{\mathsf{z}} = 0 \text{ if } \psi_{\mathsf{z}}\neq 0
\right \}.
\label{eq:cone_critical}
\end{equation}
The \EO{aforementioned} condition \eqref{eq:sign_cond} reads as follows: For every $\mathsf{z}\in\mathcal{D}$, we have
\begin{equation}\label{eq:sign_cond}
\mathsf{v}_{\mathsf{z}}
\geq 0 ~\text{ if }~ \bar{\mathsf{u}}_{\mathsf{z}}=\mathsf{a}_{\mathsf{z}},
\qquad
\mathsf{v}_{\mathsf{z}}
\leq 0 ~\text{ if }~ \bar{\mathsf{u}}_{\mathsf{z}}=\mathsf{b}_{\mathsf{z}}.
\end{equation}

\EO{As stated in \cite[Section 3.3]{MR3311948}, the following result follows from the standard Karush--Kuhn--Tucker theory of mathematical optimization in finite-dimensional spaces; see, for instance, \cite[Theorem 3.8]{MR3311948} and \cite[Section 6.3]{MR2012832}.}
%

\begin{theorem}[\EO{second order necessary and sufficient optimality conditions}] \EO{If $\bar{\mathbf{u}}\in\mathbf{U}_{ad}$ denotes a local minimum for problem \eqref{eq:min}--\eqref{eq:state_equation}, then
$
j''(\bar{\mathbf{u}})\mathbf{v}^2 \geq 0
$
for all $\mathbf{v}\in \mathbf{C}_{\bar{\mathbf{u}}}$. Conversely, if $\bar{\mathbf{u}}\in\mathbf{U}_{ad}$ satisfies the variational inequality \eqref{eq:first_order_optimality_condition} and the second order sufficient optimality condition
\begin{equation}
\label{eq:second_order_sufficient_condition}
 j''(\bar{\mathbf{u}})\mathbf{v}^2 > 0 \quad \forall \mathbf{v} \in \mathbf{C}_{\bar{\mathbf{u}}} \setminus \{ \mathbf{0} \},
\end{equation}
then there exists $\mu>0$ and $\sigma >0$ such that
\begin{equation*}
j(\mathbf{u})
\geq 
j(\bar{\mathbf{u}})
+
\frac{\mu}{2} \|   \bar{\mathbf{u}} - \mathbf{u} \|^2_{\mathbb{R}^{\ell}}
\quad
\forall \mathbf{u} \in \mathbf{U}_{ad}:  \|   \bar{\mathbf{u}} - \mathbf{u} \|_{\mathbb{R}^{\ell}} \leq \sigma.
\end{equation*}
In particular, $\bar{\mathbf{u}}$ is a strict local solution to \eqref{eq:min}--\eqref{eq:state_equation}.}
\label{thm:second_order_necessary}
\end{theorem}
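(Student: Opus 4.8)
The plan is to treat this as a finite-dimensional nonlinear program with box constraints and to exploit the fact that, by Proposition \ref{thm:second_order_diff_j}, the reduced cost $j:\mathbb{R}^{\ell}\to\mathbb{R}$ is of class $C^2$. The essential algebraic structure is that the box constraints defining $\mathbf{U}_{ad}$ decouple coordinatewise and that, by the first order theory, $j'(\bar{\mathbf{u}})\mathbf{v}=\sum_{\mathsf{z}\in\mathcal{D}}\psi_{\mathsf{z}}\mathsf{v}_{\mathsf{z}}$ with $\psi_{\mathsf{z}}$ as in \eqref{def:deriv_j}. The variational inequality \eqref{eq:first_order_optimality_condition} decouples into $\psi_{\mathsf{z}}(\mathsf{u}_{\mathsf{z}}-\bar{\mathsf{u}}_{\mathsf{z}})\ge 0$ for all $\mathsf{u}_{\mathsf{z}}\in[\mathsf{a}_{\mathsf{z}},\mathsf{b}_{\mathsf{z}}]$, which forces $\bar{\mathsf{u}}_{\mathsf{z}}=\mathsf{a}_{\mathsf{z}}$ whenever $\psi_{\mathsf{z}}>0$ and $\bar{\mathsf{u}}_{\mathsf{z}}=\mathsf{b}_{\mathsf{z}}$ whenever $\psi_{\mathsf{z}}<0$; combined with the sign conditions \eqref{eq:sign_cond} this gives $\psi_{\mathsf{z}}\mathsf{v}_{\mathsf{z}}\ge 0$ for every $\mathsf{z}\in\mathcal{D}$ and every admissible direction $\mathbf{v}$. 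This coordinatewise sign-definiteness is the engine driving both implications.

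\emph{Necessity.} Let $\bar{\mathbf{u}}$ be a local minimum and fix $\mathbf{v}\in\mathbf{C}_{\bar{\mathbf{u}}}$. First I would observe that the sign conditions \eqref{eq:sign_cond} guarantee $\bar{\mathbf{u}}+t\mathbf{v}\in\mathbf{U}_{ad}$ for all sufficiently small $t>0$: on coordinates where a bound is active the increment has the admissible sign, while on inactive coordinates feasibility holds for small $t$. Since $\mathbf{v}$ is critical, $\mathsf{v}_{\mathsf{z}}=0$ whenever $\psi_{\mathsf{z}}\neq 0$, hence $j'(\bar{\mathbf{u}})\mathbf{v}=\sum_{\mathsf{z}\in\mathcal{D}}\psi_{\mathsf{z}}\mathsf{v}_{\mathsf{z}}=0$. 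A second order Taylor expansion of $t\mapsto j(\bar{\mathbf{u}}+t\mathbf{v})$ about $t=0$ then gives $j(\bar{\mathbf{u}}+t\mathbf{v})-j(\bar{\mathbf{u}})=\tfrac{t^{2}}{2}j''(\bar{\mathbf{u}})\mathbf{v}^{2}+o(t^{2})$; dividing by $t^{2}$ and letting $t\downarrow 0$ yields $j''(\bar{\mathbf{u}})\mathbf{v}^{2}\ge 0$.

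\emph{Sufficiency.} Assume \eqref{eq:first_order_optimality_condition} and \eqref{eq:second_order_sufficient_condition} hold, and argue by contradiction: suppose the claimed quadratic growth fails, so that there exists a sequence $\{\mathbf{u}_{k}\}\subset\mathbf{U}_{ad}$ with $\mathbf{u}_{k}\to\bar{\mathbf{u}}$, $\mathbf{u}_{k}\neq\bar{\mathbf{u}}$, and $j(\mathbf{u}_{k})<j(\bar{\mathbf{u}})+\tfrac{1}{k}\rho_{k}^{2}$, where $\rho_{k}:=\|\mathbf{u}_{k}-\bar{\mathbf{u}}\|_{\mathbb{R}^{\ell}}$. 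Set $\mathbf{v}_{k}:=\rho_{k}^{-1}(\mathbf{u}_{k}-\bar{\mathbf{u}})$; since $\|\mathbf{v}_{k}\|_{\mathbb{R}^{\ell}}=1$, compactness of the unit sphere in $\mathbb{R}^{\ell}$ lets me extract a subsequence with $\mathbf{v}_{k}\to\mathbf{v}$, $\|\mathbf{v}\|_{\mathbb{R}^{\ell}}=1$. I would then verify that $\mathbf{v}\in\mathbf{C}_{\bar{\mathbf{u}}}$: the sign conditions \eqref{eq:sign_cond} pass to the limit directly from $\mathbf{u}_{k}\in\mathbf{U}_{ad}$, while a first order Taylor expansion together with the growth-failure bound (dividing by $\rho_{k}$) forces $j'(\bar{\mathbf{u}})\mathbf{v}\le 0$, and the variational inequality gives $j'(\bar{\mathbf{u}})\mathbf{v}\ge 0$, so $\sum_{\mathsf{z}\in\mathcal{D}}\psi_{\mathsf{z}}\mathsf{v}_{\mathsf{z}}=0$; as each summand is nonnegative, every $\psi_{\mathsf{z}}\mathsf{v}_{\mathsf{z}}$ vanishes, which is exactly the critical vanishing condition. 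Finally, a second order Taylor expansion, the inequality $j'(\bar{\mathbf{u}})(\mathbf{u}_{k}-\bar{\mathbf{u}})\ge 0$, and the growth-failure bound yield, after dividing by $\rho_{k}^{2}$ and passing to the limit using continuity of $j''$, the inequality $j''(\bar{\mathbf{u}})\mathbf{v}^{2}\le 0$. Since $\mathbf{v}\in\mathbf{C}_{\bar{\mathbf{u}}}\setminus\{\mathbf{0}\}$, this contradicts \eqref{eq:second_order_sufficient_condition}, proving the quadratic growth and hence strict local optimality.

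\emph{Main obstacle.} The step I expect to require the most care is showing that the limiting direction $\mathbf{v}$ is critical in the sufficiency argument, specifically the vanishing condition $\mathsf{v}_{\mathsf{z}}=0$ on coordinates with $\psi_{\mathsf{z}}\neq 0$. This hinges on combining the first order expansion with the sign-definiteness of each product $\psi_{\mathsf{z}}\mathsf{v}_{\mathsf{z}}$, so that a vanishing sum of nonnegative terms forces each term to vanish; the delicate point is keeping the scaling of the Taylor remainders consistent, dividing by $\rho_{k}$ to extract the first order information and by $\rho_{k}^{2}$ to extract the second order information.
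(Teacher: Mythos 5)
Your argument is correct, but it is worth noting that the paper itself does not prove this theorem at all: it simply observes that, because $\mathbf{U}_{ad}\subset\mathbb{R}^{\ell}$ is a box in a finite-dimensional space and $j$ is $C^2$ (Proposition \ref{thm:second_order_diff_j}), the statement is an instance of standard Karush--Kuhn--Tucker theory, and it cites the literature for the proof. What you have written out is precisely the standard argument behind those citations: Taylor expansion along feasible directions $\bar{\mathbf{u}}+t\mathbf{v}$ for necessity, and for sufficiency the normalized-sequence contradiction argument, where compactness of the unit sphere in $\mathbb{R}^{\ell}$ yields a strongly convergent subsequence $\mathbf{v}_k\to\mathbf{v}$ with $\|\mathbf{v}\|_{\mathbb{R}^{\ell}}=1$, and the coordinatewise decoupling of \eqref{eq:first_order_optimality_condition} gives $\psi_{\mathsf{z}}\mathsf{v}_{\mathsf{z}}\geq 0$ for each $\mathsf{z}$, so that the vanishing sum forces $\mathbf{v}\in\mathbf{C}_{\bar{\mathbf{u}}}$. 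Your proof buys self-containedness, and it also makes visible exactly where finite-dimensionality enters: in infinite-dimensional control settings one only gets weak convergence of the normalized directions and must exploit extra structure (e.g.\ the Tikhonov term and weak lower semicontinuity of $j''$) to close the argument, none of which is needed here. One small point to make explicit: in the converse direction $\bar{\mathbf{u}}$ is only assumed to satisfy \eqref{eq:first_order_optimality_condition}, not to be a local minimum, so you should justify the representation $j'(\bar{\mathbf{u}})\mathbf{v}=\sum_{\mathsf{z}\in\mathcal{D}}\psi_{\mathsf{z}}\mathsf{v}_{\mathsf{z}}$ with $\psi_{\mathsf{z}}$ as in \eqref{def:deriv_j} at an arbitrary admissible point; this is fine because the adjoint-state computation in the proof of Theorem \ref{thm:first_order_op_c} is valid at any $\bar{\mathbf{u}}\in\mathbf{U}_{ad}$, but it deserves a word since the theorem as stated there concerns local minima. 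With that remark added, your proposal is a complete and correct substitute for the paper's citation.
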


To present the following result, we define, for $\tau >0$, \EO{the cone}
\begin{equation}
\mathbf{C}_{\bar{\mathbf{u}}}^{\tau}:= \left \{\mathbf{v}=\{ \mathsf{v}_{\mathsf{z}} \}_{\mathsf{z}\in\mathcal{D}} \in \mathbb{R}^{\ell}
 ~\text{satisfying}~\eqref{eq:sign_cond}  
 ~\text{and}, ~\text{for}~ \mathsf{z}\in\mathcal{D},
\mathsf{v}_{\mathsf{z}} = 0 \text{ if } |\psi_{\mathsf{z}} | > \tau
 \right \}.
\label{eq:cone_critical_tau}
\end{equation}

\EO{The following result is immediate in view of our finite-dimensional setting.}

\begin{theorem}[\EO{equivalent optimality conditions}]
\label{thm:second_order_equivalent}
Let $\bar{\mathbf{u}} \in\mathbf{U}_{ad}$, $\bar{\mathsf{y}} \in W_0^{1,p}(\Omega)$, and $\bar{\mathsf{p}} \in H_0^1(\Omega)$ satisfy the first order optimality conditions \eqref{eq:state_equation}, \eqref{eq:adj_eq}, and \eqref{eq:first_order_optimality_condition}. \EO{Then, \eqref{eq:second_order_sufficient_condition} is equivalent to} 
\begin{equation}
\label{eq:second_order_condition_equivalent}
\exists \kappa, \tau>0:
\quad
j''(\bar{\mathbf{u}})\mathbf{v}^2 \geq \kappa \| \mathbf{v} \|^2_{\mathbb{R}^{\ell}} \quad \forall \mathbf{v} \in \mathbf{C}_{\bar{\mathbf{u}}}^{\tau}.
\end{equation}
\end{theorem}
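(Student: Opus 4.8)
The plan is to prove the two implications separately, exploiting throughout that everything takes place in the finite-dimensional space $\mathbb{R}^{\ell}$, where the quadratic form $j''(\bar{\mathbf{u}})(\cdot)^2$ is continuous and the unit sphere is compact.

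First I would dispose of the implication \eqref{eq:second_order_condition_equivalent} $\Rightarrow$ \eqref{eq:second_order_sufficient_condition}, which is immediate. The key observation is the inclusion $\mathbf{C}_{\bar{\mathbf{u}}} \subseteq \mathbf{C}_{\bar{\mathbf{u}}}^{\tau}$ for every $\tau > 0$: if $\mathbf{v} \in \mathbf{C}_{\bar{\mathbf{u}}}$, then $\mathsf{v}_{\mathsf{z}} = 0$ whenever $\psi_{\mathsf{z}} \neq 0$, and in particular whenever $|\psi_{\mathsf{z}}| > \tau$; since the sign condition \eqref{eq:sign_cond} is common to both definitions in \eqref{eq:cone_critical} and \eqref{eq:cone_critical_tau}, the inclusion follows. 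Consequently, if \eqref{eq:second_order_condition_equivalent} holds with constants $\kappa, \tau > 0$, then for every $\mathbf{v} \in \mathbf{C}_{\bar{\mathbf{u}}} \setminus \{ \mathbf{0} \}$ we have $j''(\bar{\mathbf{u}})\mathbf{v}^2 \geq \kappa \| \mathbf{v} \|^2_{\mathbb{R}^{\ell}} > 0$, which is precisely \eqref{eq:second_order_sufficient_condition}.

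The converse implication \eqref{eq:second_order_sufficient_condition} $\Rightarrow$ \eqref{eq:second_order_condition_equivalent} I would establish by contradiction. Assuming \eqref{eq:second_order_condition_equivalent} fails, the choice $\kappa = \tau = 1/n$ produces, for each $n \in \mathbb{N}$, a direction $\mathbf{v}_n \in \mathbf{C}_{\bar{\mathbf{u}}}^{1/n}$ with $j''(\bar{\mathbf{u}})\mathbf{v}_n^2 < n^{-1} \| \mathbf{v}_n \|^2_{\mathbb{R}^{\ell}}$; note $\mathbf{v}_n \neq \mathbf{0}$, since otherwise this inequality would read $0 < 0$. Because $\mathbf{C}_{\bar{\mathbf{u}}}^{\tau}$ is a cone and $j''(\bar{\mathbf{u}})(\cdot)^2$ is homogeneous of degree two, I would normalize to $\mathbf{w}_n := \mathbf{v}_n / \| \mathbf{v}_n \|_{\mathbb{R}^{\ell}}$, which satisfies $\| \mathbf{w}_n \|_{\mathbb{R}^{\ell}} = 1$, $\mathbf{w}_n \in \mathbf{C}_{\bar{\mathbf{u}}}^{1/n}$, and $j''(\bar{\mathbf{u}})\mathbf{w}_n^2 < n^{-1}$.

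The crux of the argument is the subsequent compactness step. Since the unit sphere in $\mathbb{R}^{\ell}$ is compact, a subsequence $\mathbf{w}_{n_k}$ converges to some $\mathbf{w}$ with $\| \mathbf{w} \|_{\mathbb{R}^{\ell}} = 1$, hence $\mathbf{w} \neq \mathbf{0}$. I would then verify that $\mathbf{w} \in \mathbf{C}_{\bar{\mathbf{u}}}$: the sign conditions \eqref{eq:sign_cond} are closed and pass to the limit, while if $\psi_{\mathsf{z}} \neq 0$ then $1/n_k < |\psi_{\mathsf{z}}|$ for $k$ large, which forces $(\mathbf{w}_{n_k})_{\mathsf{z}} = 0$ and therefore $\mathsf{w}_{\mathsf{z}} = 0$ in the limit. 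Finally, continuity of the quadratic form yields $j''(\bar{\mathbf{u}})\mathbf{w}^2 = \lim_k j''(\bar{\mathbf{u}})\mathbf{w}_{n_k}^2 \leq 0$, contradicting \eqref{eq:second_order_sufficient_condition}. The main (and essentially only) obstacle is this passage to the limit, where $\tau = 1/n_k \downarrow 0$ must be driven to zero so that the enlarged cones $\mathbf{C}_{\bar{\mathbf{u}}}^{1/n_k}$ collapse onto the critical cone $\mathbf{C}_{\bar{\mathbf{u}}}$; it is the finite dimensionality of $\mathbb{R}^{\ell}$ that renders the normalized sequence precompact and makes the whole argument elementary, consistent with the remark preceding the statement.
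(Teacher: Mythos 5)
Your proof is correct, and it matches the paper's intent: the paper states this theorem without proof, remarking only that it ``is immediate in view of our finite-dimensional setting,'' and your normalization-plus-compactness argument (together with the easy inclusion $\mathbf{C}_{\bar{\mathbf{u}}} \subseteq \mathbf{C}_{\bar{\mathbf{u}}}^{\tau}$ for the converse direction) is exactly the standard justification behind that remark. Both implications check out: the negation with $\kappa=\tau=1/n$, the nonvanishing of $\mathbf{v}_n$, the closedness of the sign conditions, the forcing of $\mathsf{w}_{\mathsf{z}}=0$ once $1/n_k<|\psi_{\mathsf{z}}|$, and the continuity of the quadratic form $j''(\bar{\mathbf{u}})(\cdot)^2$ guaranteed by Proposition \ref{thm:second_order_diff_j} are all used correctly.
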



\section{Finite element approximation of the optimal control problem}
\label{sec:fem_ocp}
In this section, we propose and analyze a finite element discretization scheme for the optimal control problem \eqref{eq:min}--\eqref{eq:state_equation}. In particular, we analyze convergence properties and derive error estimates. 

We begin our studies by providing convergence results related to a finite element discretization of the state equation \eqref{eq:state_equation}.

\subsection{The discrete state equation: convergence properties}
\label{sec:discrete_PDE_convergence}

We introduce the following finite element approximation of problem \eqref{eq:state_equation}: Find $\mathpzc{y}_h \in \mathbb{V}_h$ such that
\begin{equation}
\label{eq:state_equation_discrete}
 \int_{\Omega} \nabla \mathpzc{y}_h \cdot \nabla v_h \mathrm{d}x + \int_{\Omega} a(x,\mathpzc{y}_h) v_h \mathrm{d}x = \sum_{\mathsf{z} \in \mathcal{D} } \mathsf{u}_{\mathsf{z}} v_{h} (\mathsf{z}) 
 \quad \forall v_h \in \mathbb{V}_h.
\end{equation}
\EO{We recall that $\mathbb{V}_{h}$ is defined in \eqref{eq:piecewise_linear_set}. The existence of a discrete solution $\mathpzc{y}_h \in \mathbb{V}_{h}$, for a fixed mesh $\T_h$, follows from Brouwer's fixed point theorem while uniqueness follows from the monotonicity of $a$.}

We present the following convergence result.

\begin{theorem}[convergence properties]
Let $\Omega$ be an open, bounded, and convex polytopal domain. \EO{Let $a$ be such that \textnormal{\ref{A1}, \ref{A2}}, and \eqref{eq:a_Lipschitz} hold.} Let $\mathsf{y} \in W_{\scaleto{0}{4pt}}^{\scaleto{1}{4pt},\scaleto{p}{4pt}}(\Omega)$, for $p<d/(d-1)$, be the solution to the \EO{state equation} \eqref{eq:state_equation} and let $\mathsf{y}_h \in \mathbb{V}_h$ be the solution to the discrete equation
\begin{equation}\label{eq:weak_semilinear_discrete_convergence}
\int_{\Omega} \nabla \mathsf{y}_h \cdot \nabla v_{h} \mathrm{d}x + \int_{\Omega} a(x,\mathsf{y}_h) v_{h} \mathrm{d}x = \sum_{\mathsf{z} \in \mathcal{D}} \mathsf{u}_{\mathsf{z},h} v_h(\mathsf{z})
\quad \forall v_h \in \mathbb{V}_h,
\end{equation}
where \EO{$\mathbf{u}_h =  \{ \mathsf{u}_{\mathsf{z},h} \}_{\mathsf{z} \in \mathcal{D}} \in \mathbf{U}_{ad}$.}
 If $\mathbf{u}_h \rightarrow \mathbf{u}$ in $\mathbb{R}^{\ell}$, then $\mathsf{y}_h \rightarrow \mathsf{y}$ in $L^2(\Omega)$ as $h \rightarrow 0$.
\label{thm:convergence_state_equation}
\end{theorem}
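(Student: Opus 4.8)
The plan is to prove convergence via a compactness-plus-stability argument, splitting the error as $\mathsf{y}_h - \mathsf{y} = (\mathsf{y}_h - \tilde{\mathsf{y}}_h) + (\tilde{\mathsf{y}}_h - \mathsf{y})$, where $\tilde{\mathsf{y}}_h \in \mathbb{V}_h$ denotes the discrete solution driven by the \emph{fixed} limit control $\mathbf{u}$ rather than $\mathbf{u}_h$. The second term is controlled directly by Theorem \ref{thm:eq:error_estimate_L2_semilinear}, whose hypotheses (monotonicity, \eqref{eq:growing_a}, and the Lipschitz condition \eqref{eq:a_Lipschitz}) are precisely those assumed here; it gives $\| \tilde{\mathsf{y}}_h - \mathsf{y} \|_{L^2(\Omega)} \lesssim h^{2 - d/2} \to 0$. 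Thus the entire problem reduces to estimating the first term, which measures the sensitivity of the discrete state with respect to the data $\mathbf{u}_h \to \mathbf{u}$.

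For the first term, I would subtract the two discrete equations \eqref{eq:weak_semilinear_discrete_convergence} (with $\mathbf{u}_h$) and the defining equation for $\tilde{\mathsf{y}}_h$ (with $\mathbf{u}$). Setting $e_h := \mathsf{y}_h - \tilde{\mathsf{y}}_h \in \mathbb{V}_h$ and testing with $v_h = e_h$ yields
\begin{equation*}
\| \nabla e_h \|_{L^2(\Omega)}^2 + \int_{\Omega} \bigl( a(x,\mathsf{y}_h) - a(x,\tilde{\mathsf{y}}_h) \bigr) e_h \, \mathrm{d}x = \sum_{\mathsf{z} \in \mathcal{D}} (\mathsf{u}_{\mathsf{z},h} - \mathsf{u}_{\mathsf{z}}) \, e_h(\mathsf{z}).
\end{equation*}
By the monotonicity of $a$ the second term on the left is nonnegative and may be dropped, so $\| \nabla e_h \|_{L^2(\Omega)}^2 \le \sum_{\mathsf{z}} |\mathsf{u}_{\mathsf{z},h} - \mathsf{u}_{\mathsf{z}}| \, |e_h(\mathsf{z})|$. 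The point evaluations $e_h(\mathsf{z})$ must now be bounded by a norm of $e_h$; here I would invoke a standard discrete estimate $|e_h(\mathsf{z})| \lesssim |\log h|^{1/2} \| \nabla e_h \|_{L^2(\Omega)}$ in two dimensions (the analogous $L^\infty$-control of $\mathbb{V}_h$ functions by the $H^1$-seminorm, with a mesh-dependent factor in three dimensions). Absorbing one factor of $\| \nabla e_h \|_{L^2(\Omega)}$ gives $\| \nabla e_h \|_{L^2(\Omega)} \lesssim C(h) \, \| \mathbf{u}_h - \mathbf{u} \|_{\mathbb{R}^{\ell}}$, and Poincar\'e's inequality then controls $\| e_h \|_{L^2(\Omega)}$.

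The delicate point is that the constant $C(h)$ arising from the point-evaluation bound blows up as $h \to 0$, so one cannot simply pass to the limit in this inequality alone. The cleanest remedy is to combine the two estimates: since $\| \mathbf{u}_h - \mathbf{u} \|_{\mathbb{R}^{\ell}} \to 0$ by hypothesis while $C(h)$ grows only logarithmically (in $d=2$) or polynomially, one needs a quantitative comparison. Alternatively, and more robustly, I would replace the $H^1$-based argument with a duality/stability argument in $L^2$: regard $e_h$ as the Galerkin solution of a linearized problem with right-hand side the discrete Dirac combination $\sum_{\mathsf{z}}(\mathsf{u}_{\mathsf{z},h}-\mathsf{u}_{\mathsf{z}})\delta_{\mathsf{z}}$, introduce the mean of the coefficient as in the proof of Theorem \ref{thm:eq:error_estimate_L2_semilinear}, and apply the same duality machinery to obtain $\| e_h \|_{L^2(\Omega)} \lesssim \| \mathbf{u}_h - \mathbf{u} \|_{\mathbb{R}^{\ell}}$ with a constant uniform in $h$. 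I expect the main obstacle to be exactly this: obtaining a uniform-in-$h$ bound on the sensitivity term, since the singular right-hand side makes the naive energy estimate mesh-dependent, and one must either exploit the logarithmic slack against $\| \mathbf{u}_h - \mathbf{u} \|_{\mathbb{R}^\ell} \to 0$ or deploy the $L^2$ duality argument to close the gap cleanly. Once $\| e_h \|_{L^2(\Omega)} \to 0$ is established, the triangle inequality combined with the Theorem \ref{thm:eq:error_estimate_L2_semilinear} estimate yields $\mathsf{y}_h \to \mathsf{y}$ in $L^2(\Omega)$, concluding the proof.
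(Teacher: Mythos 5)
Your proposal is correct in its final form, but it takes a genuinely different route from the paper. You split off the \emph{discrete} solution $\tilde{\mathsf{y}}_h$ driven by the fixed limit control $\mathbf{u}$, whereas the paper splits off the \emph{continuous} solution $\mathfrak{y}$ driven by the discrete control $\mathbf{u}_h$: it writes $a(x,\mathsf{y})-a(x,\mathfrak{y}) = c_0(\mathsf{y}-\mathfrak{y})$ with $c_0 = \int_0^1 \partial a/\partial y(x,\zeta)\,\mathrm{d}\theta$, uses \ref{A2} to place $c_0$ in $L^q(\Omega)$ with $q>d/2$, and invokes Stampacchia's continuous stability estimate to get $\| \nabla(\mathsf{y}-\mathfrak{y})\|_{L^p(\Omega)} \lesssim \| \mathbf{u}-\mathbf{u}_h\|_{\mathbb{R}^\ell}$ with an $h$-independent constant; Theorem \ref{thm:eq:error_estimate_L2_semilinear} is then applied to the pair $(\mathfrak{y},\mathsf{y}_h)$, which share the same data $\mathbf{u}_h$ (uniformity of the hidden constant follows since $\{\mathbf{u}_h\}$ is bounded). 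This way the sensitivity analysis lives entirely at the continuous level and no discrete $L^\infty$ technology is needed. Your first attempt --- energy estimate plus the discrete Sobolev bound $|e_h(\mathsf{z})| \lesssim |\log h|^{1/2}\|\nabla e_h\|_{L^2(\Omega)}$ --- does \emph{not} close on its own, and for exactly the reason you flag: the hypothesis gives no rate for $\|\mathbf{u}_h-\mathbf{u}\|_{\mathbb{R}^\ell}$, so the product $C(h)\|\mathbf{u}_h-\mathbf{u}\|_{\mathbb{R}^\ell}$ need not vanish even in two dimensions, and the ``logarithmic slack'' idea is unusable; in three dimensions $C(h)\sim h^{-1/2}$ makes it hopeless. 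Your fallback duality argument, however, does work and is the right fix: with $\tilde\chi$ the difference quotient of $a$ between $\mathsf{y}_h$ and $\tilde{\mathsf{y}}_h$ (nonnegative by monotonicity, dominated by $\psi\in L^2(\Omega)$ via \eqref{eq:a_Lipschitz}), and with $(\mathfrak{z},\mathfrak{z}_h)$ the dual pair from the proof of Theorem \ref{thm:eq:error_estimate_L2_semilinear} for $\mathfrak{f}=e_h$, Galerkin orthogonality of the dual pair tested with $e_h$ plus the equation for $e_h$ tested with $\mathfrak{z}_h$ give the identity $\|e_h\|_{L^2(\Omega)}^2 = \sum_{\mathsf{z}\in\mathcal{D}}(\mathsf{u}_{\mathsf{z},h}-\mathsf{u}_{\mathsf{z}})\mathfrak{z}_h(\mathsf{z})$, and the uniform bound $\|\mathfrak{z}_h\|_{L^\infty(\Omega)}\lesssim\|\mathfrak{f}\|_{L^2(\Omega)}$ established there yields $\|e_h\|_{L^2(\Omega)}\lesssim\|\mathbf{u}_h-\mathbf{u}\|_{\mathbb{R}^\ell}$ uniformly in $h$, in both dimensions. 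The trade-off: the paper's route is shorter because it outsources the sensitivity step to a known continuous estimate (at the price of using \ref{A2}), while yours keeps everything discrete and needs only the Lipschitz bound \eqref{eq:a_Lipschitz} for that step, at the price of rerunning the duality machinery of Theorem \ref{thm:eq:error_estimate_L2_semilinear} for the linearized discrete problem.
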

\begin{proof}
\EO{We begin the proof with a simple application of the triangle inequality and write
\[
\| \mathsf{y} -  \mathsf{y}_h \|_{L^{2}(\Omega)} 
\leq
\| \mathsf{y} - \mathfrak{y} \|_{L^{2}(\Omega)} 
+
\| \mathfrak{y} - \mathsf{y}_h \|_{L^{2}(\Omega)},
\]
where $\mathfrak{y}$ denotes the solution to: Find $\mathfrak{y} \in W_0^{1,p}(\Omega)$ such that
\begin{equation}
\int_{\Omega} \nabla \mathfrak{y} \cdot \nabla v \mathrm{d}x 
+
\int_{\Omega} a(x,\mathfrak{y})  v \mathrm{d}x = \sum_{\mathsf{z} \in \mathcal{D} } \mathsf{u}_{\mathsf{z},h}  v(\mathsf{z})
\quad
\forall v \in W_0^{1,p'}(\Omega),
\quad
p<d/(d-1).
\label{eq:fraky}
\end{equation}
To} \EO{control $\| \mathsf{y} - \mathfrak{y} \|_{L^{2}(\Omega)}$, we write $a(x,\mathsf{y}) - a(x,\mathfrak{y})$ as $c_0 (\mathsf{y}-\mathfrak{y})$, where $c_0 := \int_0^1 \partial a / \partial y (x, \zeta )\mathrm{d}\theta$ with $\zeta:= \mathsf{y} + \theta(\mathfrak{y}-\mathsf{y})$, and observe that $\mathsf{y} - \mathfrak{y}$ solves the problem
\begin{equation*}
\int_{\Omega} \nabla (\mathsf{y}-\mathfrak{y}) \cdot \nabla v \mathrm{d}x 
+
\int_{\Omega} c_0 (\mathsf{y} - \mathfrak{y}) v \mathrm{d}x = \sum_{\mathsf{z} \in \mathcal{D} } [ \mathsf{u}_{\mathsf{z}}-\mathsf{u}_{\mathsf{z},h}] v(\mathsf{z})
\quad
\forall v \in W_0^{1,p'}(\Omega),
\quad
p<d/(d-1).
\label{eq:y_fraky}
\end{equation*}
Since both $\mathsf{y}$ and $\mathfrak{y}$ belong to $W_0^{1,p}(\Omega)$ for every $p<d/(d-1)$, we deduce that $\zeta = \mathsf{y} + \theta(\mathfrak{y}-\mathsf{y}) \in L^r(\Omega)$ for every $r < \infty$ if $d = 2$ and for every $r<3$ if $d=3$ \EOP{\cite[Theorem 4.12, Part I, Case C]{MR2424078}}. Consequently, \ref{A2} guarantees that $\partial a / \partial y ( \cdot, \zeta )\in L^q(\Omega)$ for some $q>d/2$. We are thus in position to invoke the stability estimate of \cite[Theorem 9.1]{MR192177} to arrive at
\begin{equation}
\| \nabla (\mathsf{y}-\mathfrak{y})\|_{L^{p}(\Omega)} \lesssim \| \mathbf{u} - \mathbf{u}_h \|_{\mathbb{R}^{\ell}} 
\rightarrow 0,
\qquad
h \rightarrow 0.
\label{eq:first_estimate}
\end{equation}
We now} \EO{control $\| \mathfrak{y} - \mathsf{y}_h \|_{L^{2}(\Omega)}$. Since $\mathfrak{y} \in W_0^{1,p}(\Omega)$ solves \eqref{eq:fraky} and $\mathsf{y}_h \in \mathbb{V}_h$ solves \eqref{eq:weak_semilinear_discrete_convergence}, an immediate 
application of the error estimate of Theorem \ref{thm:eq:error_estimate_L2_semilinear} yields the existence of $h_{\triangle}>0$ such that 
\begin{equation}
 \| \mathfrak{y} - \mathsf{y}_h \|_{L^{2}(\Omega)} \lesssim h^{2-d/2} 
\qquad \forall h \leq h_{\triangle}.
\label{eq:second_estimate}
\end{equation}
The hidden constant is independent of $h$, but depends on $\| \mathbf{u}_h \|_{\mathbb{R}^{\ell}}$, $\| \chi \|_{L^2(\Omega)}$, $\| a(\cdot,0) \|_{L^1(\Omega)}$, $\| \phi_0 \|_{L^1(\Omega)}$, and $r$ as in assumption \ref{A1}. We observe that, since $\{ \mathbf{u}_h \}_{h>0} \subset \mathbf{U}_{ad}$ is convergent to $\mathbf{u} \in \mathbf{U}_{ad}$ as $h \rightarrow 0$, it is uniformly bounded with respect to discretization. A collection of the convergence property \eqref{eq:first_estimate} and the error estimate \eqref{eq:second_estimate} allows us to conclude.}
\end{proof}

\subsection{The discrete adjoint equation: \EO{local error estimates in maximum norm}}
\label{sec:discrete_adjoint_convergence}

Let us introduce the following finite element approximation of problem \eqref{eq:adj_eq}: Find $\mathpzc{p}_h \in \mathbb{V}_h$ such that
\begin{equation}
\label{eq:adjoint_discrete}
 \int_{\Omega}  \nabla v_h \cdot \nabla \mathpzc{p}_h \mathrm{d}x + \int_{\Omega} \tfrac{\partial a}{\partial y}(x,\mathsf{y}) \mathpzc{p}_h v_h \mathrm{d}x = \int_{\Omega} (\mathsf{y} - \mathsf{y}_d) v_h \mathrm{d}x \quad \forall v_h \in \mathbb{V}_h.
\end{equation}

\EO{In the following result, we present a \emph{two-dimensional} local error estimate in maximum norm.}

\begin{theorem}[\EO{an error estimate in maximum norm: $d=2$}]\label{thm:error_estimate_adj}
\EOP{Let $\Omega \subset \mathbb{R}^2$ be an open, bounded, and convex polygonal domain. Let $a$ be such that \textnormal{\ref{A1}} and \textnormal{\ref{A2}} hold. Let $\Omega_1 \Subset \Omega_0 \Subset \Omega$ with $\Omega_0$ smooth. If, in addition, $\partial a/\partial y(\cdot,y) \in L^{\mathfrak{q}}(\Omega)$, for every $y \in \mathbb{R}$, where $\mathfrak{q}>2$, and $\mathsf{y}_d, \partial a/\partial y(\cdot,y) \in L^t(\Omega_0)$, for every $y \in \mathbb{R}$, where $t < \infty$, then there exists $h_{\star}>0$ such that 
\begin{equation}
\| \mathsf{p} - \mathpzc{p}_h \|_{L^{\infty}(\Omega_1)} \lesssim h^2|\log h|^2 
\label{eq:local_error_estimate}
\end{equation}
for every $h \leq h_{\star}$. Here, the hidden constant is independent of $h$.}
\end{theorem}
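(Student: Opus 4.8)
The plan is to obtain the local maximum-norm estimate from an interior (Schatz--Wahlbin type) finite element error estimate, which localizes the $L^{\infty}$ error on $\Omega_1$ to a local best-approximation error on an intermediate subdomain plus a globally controlled pollution term. Concretely, I would fix nested domains $\Omega_1 \Subset \Omega' \Subset \Omega_0$ and invoke, for the bilinear form associated with \eqref{eq:adj_eq} (i.e.\ including the nonnegative reaction term $\tfrac{\partial a}{\partial y}(\cdot,\mathsf{y})$), an interior estimate of the form
\[
\| \mathsf{p} - \mathpzc{p}_h \|_{L^{\infty}(\Omega_1)} \lesssim |\log h| \, \| \mathsf{p} - I_h \mathsf{p} \|_{L^{\infty}(\Omega')} + \| \mathsf{p} - \mathpzc{p}_h \|_{L^2(\Omega)},
\]
where $I_h$ is the Lagrange interpolation operator and the single logarithmic factor is the one intrinsic to the two-dimensional discrete Green's function. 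The two terms on the right are then treated separately.

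For the pollution term I would observe that, since $\mathsf{p} \in L^{\infty}(\Omega)$ by Proposition \ref{pro:regularity_p} and $\partial a/\partial y(\cdot,\mathsf{y}) \in L^{\mathfrak{q}}(\Omega)$ with $\mathfrak{q}>2$, the effective right-hand side $\mathsf{y} - \mathsf{y}_d - \tfrac{\partial a}{\partial y}(\cdot,\mathsf{y})\mathsf{p}$ lies in $L^2(\Omega)$, so convexity of $\Omega$ yields $\mathsf{p} \in H^2(\Omega) \cap H_0^1(\Omega)$ globally. A standard Aubin--Nitsche duality argument then gives $\| \mathsf{p} - \mathpzc{p}_h \|_{L^2(\Omega)} \lesssim h^2$, which is of the desired order and in fact carries no logarithm, so it is harmless.

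The heart of the matter is the interpolation term. By Proposition \ref{pro:local_regularity}, applied with $\Omega'$ in the role of its interior domain, $\mathsf{p} \in W^{2,t}(\Omega')$ for every $t<\infty$, and the interpolation estimate for piecewise linears in two dimensions gives
\[
\| \mathsf{p} - I_h \mathsf{p} \|_{L^{\infty}(\Omega')} \lesssim h^{2-2/t} \, \| \mathsf{p} \|_{W^{2,t}(\Omega')}.
\]
Crucially, the bound \eqref{eq:regularity_p_W2t} shows that $\| \mathsf{p} \|_{W^{2,t}(\Omega')} \lesssim C_t$ with $C_t \sim t$ as $t \uparrow \infty$. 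Inserting this into the interior estimate produces the quantity $|\log h|\, h^{2-2/t}\, t$, which I would minimize over $t$: writing $L = |\log h|$, the factor $t\, h^{-2/t} = t\, e^{2L/t}$ is minimized at $t = 2L$, where it equals $2eL$. Choosing $t \sim |\log h|$ therefore turns $h^{2-2/t}t$ into $h^2 |\log h|$ (the factor $h^{-2/t}$ becoming an absolute constant), and multiplying by the Green's-function logarithm yields exactly $h^2 |\log h|^2$.

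The main obstacle I anticipate is justifying the interior estimate in the presence of the variable, merely integrable reaction coefficient $\partial a/\partial y(\cdot,\mathsf{y})$: the classical Schatz--Wahlbin machinery is developed for smooth (or at least $L^{\infty}$) coefficients, so one must check that the localization and superapproximation arguments survive under the available local integrability $\partial a/\partial y(\cdot,y)\in L^t(\Omega_0)$ for all $t<\infty$, and that the lower-order term generated by the reaction coefficient on the intermediate annulus $\Omega_0\setminus\Omega'$ is absorbed into the pollution term. A secondary technical point is the uniformity in $t$ of the constants in both the interior estimate and the interpolation estimate, which must be tracked carefully so that the optimization $t\sim|\log h|$ does not introduce a hidden growing factor beyond the two logarithms already accounted for.
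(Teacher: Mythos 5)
Your overall skeleton coincides with the paper's: an interior (Schatz--Wahlbin) maximum-norm estimate, local $W^{2,t}$ regularity with constant $C_t\sim t$ from \eqref{eq:regularity_p_W2t}, the choice $t\sim|\log h|$, and a globally controlled $L^2$ pollution term; your logarithmic bookkeeping ($h^{2-2/t}\,t$ optimized at $t\sim|\log h|$, times one Green's-function logarithm) is exactly the paper's computation. However, the step you yourself flag as ``the main obstacle'' is a genuine gap, not a detail to be checked: the interior estimate you invoke is not available for the bilinear form containing the reaction coefficient $\partial a/\partial y(\cdot,\mathsf{y})$, which here is merely in $L^{\mathfrak{q}}(\Omega)$ with $\mathfrak{q}>2$ (the localization and superapproximation arguments of \cite{MR431753} are developed for bounded coefficients), and you cannot instead apply the constant-coefficient (Laplacian) version directly to $\mathsf{p}-\mathpzc{p}_h$, because this error is not discrete-harmonic for the Laplacian form: $(\nabla(\mathsf{p}-\mathpzc{p}_h),\nabla v_h)_{L^2(\Omega)}=-\bigl(\tfrac{\partial a}{\partial y}(\cdot,\mathsf{y})(\mathsf{p}-\mathpzc{p}_h),v_h\bigr)_{L^2(\Omega)}\neq 0$. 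As written, your proof therefore does not go through at its central step.

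The paper's device that resolves precisely this point is an intermediate Ritz projection: it introduces $q_h\in\mathbb{V}_h$ solving \eqref{eq:adjoint_discrete} with $\partial a/\partial y(\cdot,\mathsf{y})\mathpzc{p}_h$ replaced by $\partial a/\partial y(\cdot,\mathsf{y})\mathsf{p}$, so that $(\nabla(\mathsf{p}-q_h),\nabla v_h)_{L^2(\Omega)}=0$ for all $v_h\in\mathbb{V}_h$ and the classical interior estimate for the Laplacian applies verbatim to $\mathsf{p}-q_h$; your $t\sim|\log h|$ optimization then yields $\|\mathsf{p}-q_h\|_{L^{\infty}(\Omega_1)}\lesssim h^2|\log h|^2$ exactly as you computed. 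The remainder $q_h-\mathpzc{p}_h$ is the Galerkin approximation of an auxiliary Poisson problem with datum $\partial a/\partial y(\cdot,\mathsf{y})(\mathpzc{p}_h-\mathsf{p})$, whose continuous solution $\mathfrak{p}$ obeys $\|\mathfrak{p}\|_{L^{\infty}(\Omega)}\lesssim\|\nabla\mathfrak{p}\|_{L^{r}(\Omega)}\lesssim\|\partial a/\partial y(\cdot,\mathsf{y})\|_{L^{\mathfrak{q}}(\Omega)}\|\mathsf{p}-\mathpzc{p}_h\|_{L^2(\Omega)}$ for some $r>2$ --- this is where the hypothesis $\mathfrak{q}>2$ actually enters --- whence $\|q_h-\mathpzc{p}_h\|_{L^{\infty}(\Omega_1)}\lesssim\|\mathsf{p}-\mathpzc{p}_h\|_{L^2(\Omega)}\lesssim h^2|\mathsf{p}|_{H^2(\Omega)}$; your Aubin--Nitsche step reappears here, and only here. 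To keep your direct route you would have to reprove the interior maximum-norm estimate for merely integrable lower-order coefficients; the splitting through $q_h$ makes that unnecessary.
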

\begin{proof}
\EO{We begin the proof with a simple application of the triangle inequality and write}
\[
\| \mathsf{p}  -\mathpzc{p}_h\|_{L^{\infty}(\Omega_1)} 
\leq 
\| \mathsf{p}  - q_h \|_{L^{\infty}(\Omega_1)} 
+
\| q_h  -\mathpzc{p}_h \|_{L^{\infty}(\Omega_1)},
\]
where $q_h \in \mathbb{V}_h$ solves \eqref{eq:adjoint_discrete} \EO{but with} $\partial a / \partial y(\cdot,\mathsf{y})\mathpzc{p}_h$ replaced by $\partial a / \partial y(\cdot,\mathsf{y})\mathsf{p}$. A key ingredient in favor of the definition of $q_h$ is that $(\nabla(\mathsf{p} - q_h), \nabla v_h)_{L^2(\Omega)} = 0$ for every $v_h \in \mathbb{V}_h$. Let $\Lambda_1$ be a smooth domain such that $\Omega_1 \Subset \Lambda_1 \Subset \Omega_0$. We thus invoke \cite[Corollary 5.1]{MR431753} and \cite[Proposition 4.3]{MR3973329} to conclude the existence of $h_0 \in (0,1)$ and $C>0$ such that, for \EO{any} $v_h \in \mathbb{V}_h$,
\[
\| \mathsf{p}  - q_h \|_{L^{\infty}(\Omega_1)} \lesssim |\log \mathfrak{l} h| \| \mathsf{p}  - v_h \|_{L^{\infty}(\Lambda_1)}
+ \mathfrak{l}^{-\frac{d}{2}}  \| \mathsf{p}  - q_h \|_{L^{2}(\Lambda_1)}
\quad
\forall h \leq h_0.
\]
Here, $\mathfrak{l}$ is such that $\mathrm{dist}(\Omega_1,\partial \Lambda_1) \geq \mathfrak{l}$, $\mathrm{dist}(\Lambda_1,\partial \Omega) \geq \mathfrak{l}$, and $C h \leq \mathfrak{l}$. \EO{In view of the assumptions on $a$ and the convexity of $\Omega$, the results of Proposition \ref{pro:local_regularity} guarantee that $\mathsf{p} \in H^2(\Omega) \cap W^{2,t}(\Lambda_1)$, where $t$ is as in the statement of the theorem. The $H^2(\Omega)$-regularity of $\mathsf{p}$ follows from the fact that $\partial a/\partial y(\cdot,y) \in L^{2}(\Omega)$, for every $y \in \mathbb{R}$, and that $\mathsf{p} \in L^{\infty}(\Omega)$; see Proposition \ref{pro:regularity_p}.} We can thus obtain 
\begin{equation}
\begin{aligned}
\| \mathsf{p}  - q_h \|_{L^{\infty}(\Omega_1)} & \leq C_1|\log h| h^{2-\frac{d}{t}} \EO{\| \nabla^2 \mathsf{p} \|_{L^t(\Lambda_1)}}
+ C_2 h^2 |\mathsf{p}|_{H^2(\Omega)}
\\
& \leq
C_t|\log h| h^{2-\frac{d}{t}} \mathfrak{C}
+ C_2 h^2 |\mathsf{p}|_{H^2(\Omega)},
\end{aligned}
\label{eq:local_error_estimate_aux}
\end{equation}
where $C_1, C_2$ and $C_t$ are positive constants that are independent of $h$ and $\mathsf{p}$ and $\mathfrak{C} = \mathfrak{C}(\mathsf{y},\mathsf{y}_d,a,\mathsf{p})$ collects the terms appearing in the right hand side of estimate \eqref{eq:regularity_p_W2t}. 
\EO{Inspired by \cite[page 3]{MR637283}, we set $t = |\log h|$, for $h$ sufficiently small, and use that $C_t$ behaves as $Ct$ as $t\uparrow\infty$, to obtain the error estimate $\| \mathsf{p}  - q_h \|_{L^{\infty}(\Omega_1)} \lesssim h^{\scaleto{2}{4pt}}|\log h|^{\scaleto{2}{4pt}}$. 
}

We now control $\| q_h  - \mathpzc{p}_h \|_{L^{\infty}(\Omega_1)}$. To accomplish this task, we first notice that 
\begin{equation}
q_h  - \mathpzc{p}_h \in \mathbb{V}_h:
\quad
\int_{\Omega} \nabla(q_h  - \mathpzc{p}_h) \cdot \nabla v_h \mathrm{d}x = \int_{\Omega} \tfrac{\partial a}{\partial y}(x,\mathsf{y}) ( \mathpzc{p}_h - \mathsf{p} )v_h \mathrm{d}x
\quad
\forall v_h \in \mathbb{V}_h.
\label{eq:qh-frakph}
\end{equation}
Let $\mathfrak{p}$ be the solution to the associated continuous problem, i.e.,
\begin{equation}
\mathfrak{p} \in H_0^1(\Omega):
\quad
\int_{\Omega} \nabla \mathfrak{p} \cdot \nabla v \mathrm{d}x = \int_{\Omega} \tfrac{\partial a}{\partial y}(x,\mathsf{y}) ( \mathpzc{p}_h - \mathsf{p} ) v \mathrm{d}x 
\quad
\forall v \in H_0^1(\Omega).
\label{eq:frakp}
\end{equation}
\EO{In what follows, we obtain a stability bound for $\mathfrak{p}$ in $L^{\infty}(\Omega)$ on the basis of a basic Sobolev embedding and the stability bound in \cite[Theorem 0.5]{MR1331981}. In fact, we have
\begin{equation}
\begin{aligned}
\| \mathfrak{p} \|_{L^{\infty}(\Omega)} & \lesssim  \| \nabla \mathfrak{p} \|_{L^{r}(\Omega)} \lesssim \left \| \tfrac{\partial a}{\partial y}(\cdot,\mathsf{y}) \right \|_{L^{\mathfrak{q}}(\Omega)} \| \mathpzc{p}_h - \mathsf{p} \|_{L^2(\Omega)},
\end{aligned}
\end{equation}
where $r > 2$ and $\mathfrak{q} > 2$. We thus control} $\| q_h  - \mathpzc{p}_h \|_{L^{\infty}(\Omega_1)}$ as follows: $\| q_h  - \mathpzc{p}_h \|_{L^{\infty}(\Omega_1)} \leq \| (q_h  - \mathpzc{p}_h) - \mathfrak{p}  \|_{L^{\infty}(\Omega_1)} +  \| \mathfrak{p}  \|_{L^{\infty}(\Omega_1)}$. \EO{Since $q_h  - \mathpzc{p}_h$ solves \eqref{eq:qh-frakph} and $\mathfrak{p}$ solves \eqref{eq:frakp}, i.e, $q_h  - \mathpzc{p}_h$ corresponds to a finite element approximation of $\mathfrak{p}$, we obtain} 
\[
\| (q_h  - \mathpzc{p}_h) - \mathfrak{p}  \|_{L^{\infty}(\Omega_1)} \lesssim 
h^{1-\frac{d}{r}} \| \nabla \mathfrak{p} \|_{L^r(\Omega)}
\lesssim 
\| \nabla \mathfrak{p} \|_{L^r(\Omega)}.
\]
\EO{Consequently, we obtain
$
\| q_h  - \mathpzc{p}_h \|_{L^{\infty}(\Omega_1)} \lesssim \|  \mathpzc{p}_h - \mathsf{p} \|_{L^2(\Omega)} \lesssim h^2 | \mathsf{p}  |_{H^2(\Omega)}.
$
This estimate combined with the error bound $\| \mathsf{p} - q_h \|_{L^{\infty}(\Omega_1)} \lesssim h^{\scaleto{2}{4pt}}|\log h|^{\scaleto{2}{4pt}}$ yield \eqref{eq:local_error_estimate}.} 
\end{proof}

\subsection{\EO{The discrete state equation: an error estimate in $L^1(\Omega)$}}
\label{sec:error_estimate_L1}
In this section, we follow the ideas developed in \cite[Theorem 4.1]{MR3116646} and \cite[Lemma 4.2]{MR3225501} and derive an error estimate in $L^1(\Omega)$ for \EO{the finite element approximation \eqref{eq:state_equation_discrete}} of the \emph{semilinear state equation} \eqref{eq:state_equation}. Notice that, since $\mathcal{D} \subset \Omega$ and $\mathcal{D}$ is finite, $\mathrm{dist}(\mathcal{D},\partial \Omega)>0$ so that we can conclude the existence of smooth subdomains $\Omega_0$ and $\Omega_1$ such that $\mathcal{D} \subset \Omega_1 \Subset \Omega_0 \Subset \Omega$.

\begin{theorem}[\EO{an error estimate in $L^1(\Omega)$: $d=2$}]
\EOP{Let $\Omega \subset \mathbb{R}^2$ be an open, bounded, and convex polygonal domain. Let $a$ be such that \textnormal{\ref{A1}} holds. Assume, in addition, that $a(\cdot,0) \in L^2(\Omega)$ 
and that \eqref{eq:a_Lipschitz} holds with $\psi \in L^s(\Omega)$, where $s>2$. Let $\Omega_0$ and $\Omega_1$ be smooth subdomains such that $\mathcal{D} \subset \Omega_1 \Subset \Omega_0 \Subset \Omega$. If $\psi \in L^t(\Omega_0)$, for every $t < \infty$, 
then, there exists $h_{\bowtie}>0$ such that
\begin{equation}
\| y - \mathpzc{y}_h \|_{L^{1}(\Omega)} \lesssim h^2|\log h|^2
\label{eq:L1_error_estimate}
\end{equation}
for every $h \leq h_{\bowtie}$. Here, the hidden constant is independent of $h$.}
\label{thm:L1_error_estimate}
\end{theorem}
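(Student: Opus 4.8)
The plan is to estimate $\| y - \mathpzc{y}_h \|_{L^1(\Omega)}$ by a duality argument, representing the $L^1$-norm via pairing against $L^\infty$ test functions and exploiting the local maximum-norm control of the adjoint already available from Theorem~\ref{thm:error_estimate_adj}. First I would write
\[
\| y - \mathpzc{y}_h \|_{L^1(\Omega)} = \sup_{\| g \|_{L^\infty(\Omega)} \le 1} \int_\Omega (y - \mathpzc{y}_h) g \,\mathrm{d}x,
\]
and for a fixed such $g$ introduce the auxiliary adjoint pair $(\mathsf{p}_g, \mathpzc{p}_{g,h})$ solving the continuous and discrete problems \eqref{eq:adj_eq} and \eqref{eq:adjoint_discrete} with right-hand side $g$ in place of $\mathsf{y} - \mathsf{y}_d$ and with the linearized coefficient $c_0 = \int_0^1 \tfrac{\partial a}{\partial y}(x,\mathsf{y} + \theta(\mathpzc{y}_h - \mathsf{y}))\,\mathrm{d}\theta$ replacing $\tfrac{\partial a}{\partial y}(\cdot,\mathsf{y})$. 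As in the proof of Theorem~\ref{thm:convergence_state_equation}, writing $a(x,y) - a(x,\mathpzc{y}_h) = c_0\,(y - \mathpzc{y}_h)$ linearizes the semilinear difference exactly, so that $y - \mathpzc{y}_h$ is the error of a \emph{linear} problem whose adjoint is precisely $\mathsf{p}_g$. Testing the error equation against $\mathsf{p}_g$ and using the Galerkin orthogonality inherited from \eqref{eq:state_equation} and \eqref{eq:state_equation_discrete}, the bulk of the integral collapses, and the dominant contribution becomes the nodal evaluation $\sum_{\mathsf{z} \in \mathcal{D}} \mathsf{u}_{\mathsf{z}} (\mathsf{p}_g(\mathsf{z}) - \mathpzc{p}_{g,h}(\mathsf{z}))$, exactly the structure for which the local maximum-norm estimate is designed.

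The key steps, in order, are then as follows. First, verify that the linearized coefficient $c_0$ satisfies the hypotheses of Theorem~\ref{thm:error_estimate_adj}: monotonicity is inherited from \ref{A1}, and the assumed integrability $\psi \in L^s(\Omega_0)$ together with $\psi \in L^t(\Omega_0)$ for all $t < \infty$ transfers to $c_0$ via the Lipschitz bound \eqref{eq:a_Lipschitz} and the fact that both $\mathsf{y}$ and $\mathpzc{y}_h$ are controlled, so $\tfrac{\partial a}{\partial y}$ evaluated along the segment inherits the required $L^{\mathfrak q}(\Omega)$ and $L^t(\Omega_0)$ regularity. Second, apply Theorem~\ref{thm:error_estimate_adj} to the pair $(\mathsf{p}_g, \mathpzc{p}_{g,h})$ to obtain $\| \mathsf{p}_g - \mathpzc{p}_{g,h} \|_{L^\infty(\Omega_1)} \lesssim h^2 |\log h|^2$, \emph{uniformly} in $g$ with $\|g\|_{L^\infty(\Omega)} \le 1$ — the uniformity being essential, since I must take the supremum over $g$ afterwards. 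Since $\mathcal{D} \subset \Omega_1$, the nodal differences $\mathsf{p}_g(\mathsf{z}) - \mathpzc{p}_{g,h}(\mathsf{z})$ for $\mathsf{z} \in \mathcal{D}$ are all bounded by this local $L^\infty$ error. Third, bound the resulting sum by
\[
\Big| \sum_{\mathsf{z} \in \mathcal{D}} \mathsf{u}_{\mathsf{z}} \big(\mathsf{p}_g(\mathsf{z}) - \mathpzc{p}_{g,h}(\mathsf{z})\big) \Big|
\lesssim \Big(\sum_{\mathsf{z} \in \mathcal{D}} |\mathsf{u}_{\mathsf{z}}|\Big) \, \| \mathsf{p}_g - \mathpzc{p}_{g,h} \|_{L^\infty(\Omega_1)}
\lesssim h^2 |\log h|^2,
\]
and take the supremum over $g$ to conclude \eqref{eq:L1_error_estimate}.

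I expect the main obstacle to be the careful handling of the cross terms generated by the duality pairing and the verification that, after the linearization, the error identity reduces \emph{cleanly} to the nodal sum above. Unlike the purely linear case treated in \cite[Lemma~4.2]{MR3225501}, here the adjoint problem carries the variable zeroth-order coefficient $c_0$, and one must check that testing the error equation against $\mathsf{p}_g$ reproduces exactly the reaction term present in the adjoint, so that no uncontrolled remainder in $c_0 (y - \mathpzc{y}_h)$ survives; this is where the exact-linearization trick is indispensable and where the regularity transfer from $a$ to $c_0$ must be justified with care. A secondary technical point is ensuring the constant in Theorem~\ref{thm:error_estimate_adj} is genuinely independent of $g$, which requires that the $W^{2,t}(\Lambda_1)$ and $H^2(\Omega)$ regularity bounds for $\mathsf{p}_g$ depend on $g$ only through $\|g\|_{L^\infty(\Omega)} \le \|g\|_{L^2(\Omega)}^{0}$-type norms that are uniformly bounded; this follows from the structure of the stability estimate \eqref{eq:regularity_p_W2t} but should be stated explicitly. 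Once these two points are settled, the $\mathcal{O}(h^2 |\log h|^2)$ rate follows immediately, matching the maximum-norm rate for the adjoint, and is nearly optimal in terms of approximation.
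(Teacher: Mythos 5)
Your overall strategy is the same as the paper's (a linearized dual problem, Galerkin orthogonality, and a local maximum-norm estimate at the points of $\mathcal{D}$ with the logarithmic $t=|\log h|$ trick; the paper simply takes $g=\mathrm{sgn}(y-\mathpzc{y}_h)$ instead of a supremum over $g$, which is immaterial). However, there is a genuine gap: the error identity does \emph{not} reduce cleanly to the nodal sum. Carrying out your pairing with the dual pair $(\mathfrak{z},\mathfrak{z}_h)$ for the coefficient $\chi$, Galerkin orthogonality for the state removes only the contribution tested against elements of $\mathbb{V}_h$, and what survives is
\begin{equation*}
\int_\Omega (y-\mathpzc{y}_h)\, g \,\mathrm{d}x
= \sum_{\mathsf{z}\in\mathcal{D}} \mathsf{u}_{\mathsf{z}}\bigl(\mathfrak{z}(\mathsf{z})-\mathfrak{z}_h(\mathsf{z})\bigr)
+ \bigl(\chi(\mathfrak{z}-\mathfrak{z}_h),\mathpzc{y}_h\bigr)_{L^2(\Omega)}
- \bigl(a(\cdot,\mathpzc{y}_h),\mathfrak{z}-\mathfrak{z}_h\bigr)_{L^2(\Omega)},
\end{equation*}
i.e., two volume residuals involving the \emph{discrete} state remain, precisely because $\mathpzc{y}_h$ satisfies the discrete, not the continuous, equation. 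These terms are not negligible by structure; they must be bounded via the global estimate $\|\mathfrak{z}-\mathfrak{z}_h\|_{L^2(\Omega)}\lesssim h^2|\mathfrak{z}|_{H^2(\Omega)}$ together with \emph{uniform} bounds on $\|\chi\,\mathpzc{y}_h\|_{L^2(\Omega)}$ and $\|a(\cdot,\mathpzc{y}_h)\|_{L^2(\Omega)}$. This is exactly where the hypotheses $a(\cdot,0)\in L^2(\Omega)$ and $\psi\in L^s(\Omega)$ with $s>2$ enter, combined with the uniform $W_0^{1,p}(\Omega)$ stability of $\mathpzc{y}_h$ for quasi-uniform meshes on a convex polygon \cite[Theorem 8.5.3]{MR2373954}. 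Your outline never invokes $a(\cdot,0)\in L^2(\Omega)$, which confirms that these residuals were dropped rather than handled; your stated hope that ``no uncontrolled remainder in $c_0(y-\mathpzc{y}_h)$ survives'' is false as written.

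Two smaller corrections. First, your coefficient $c_0=\int_0^1 \tfrac{\partial a}{\partial y}(x,\mathsf{y}+\theta(\mathpzc{y}_h-\mathsf{y}))\,\mathrm{d}\theta$ presupposes differentiability of $a$ in $y$, i.e., \ref{A2}, which this theorem does not assume: only \ref{A1} and the Lipschitz condition \eqref{eq:a_Lipschitz} are available. You must instead use the divided-difference coefficient $\chi$ from the proof of Theorem \ref{thm:eq:error_estimate_L2_semilinear}, which requires no differentiability and satisfies $|\chi|\leq|\psi|$, so the needed $L^s(\Omega)$ and $L^t(\Omega_0)$ integrability transfers directly from $\psi$. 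Second, Theorem \ref{thm:error_estimate_adj} cannot be cited as a black box: it is stated for the specific coefficient $\tfrac{\partial a}{\partial y}(\cdot,\mathsf{y})$ and right-hand side $\mathsf{y}-\mathsf{y}_d$ under \ref{A2}; for the auxiliary problem with coefficient $\chi$ and datum $g$ you must rerun its proof, establishing local $W^{2,t}(\Omega_1)$ regularity of $\mathfrak{z}$ from the source $g-\chi\mathfrak{z}$ via \cite[Lemma 4.2]{MR3973329} and choosing $t=|\log h|$ with the constant growth $C_t\sim Ct$ — after which uniformity in $\|g\|_{L^\infty(\Omega)}\leq 1$ is automatic. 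With these repairs your argument coincides with the paper's proof and yields the stated $h^2|\log h|^2$ bound.
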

\begin{proof}
Define $\chi$, $\mathfrak{z}$, and $\mathfrak{z}_h$ as in the proof of Theorem \ref{thm:eq:error_estimate_L2_semilinear}. Set $\mathfrak{f} = \mathrm{sgn}(\mathsf{y} - \mathpzc{y}_h) \in L^{\infty}(\Omega)$ as a forcing term in \EO{problem \eqref{eq:problem_z}. A density argument allows us to set $v = y - \mathpzc{y}_h$ in problem \eqref{eq:problem_z} and obtain}
\begin{equation*}
\| y - \mathpzc{y}_h \|_{L^1(\Omega)}  =
(\nabla (y-\mathpzc{y}_h), \nabla (\mathfrak{z}-\mathfrak{z}_h))_{L^2(\Omega)} 
+
(a(\cdot,y)-a(\cdot,\mathpzc{y}_h), \mathfrak{z}-\mathfrak{z}_h)_{L^2(\Omega)}.
\end{equation*}
\EO{We now exploit the fact that $y$ solves \eqref{eq:state_equation} to arrive at}
\begin{equation}
\label{eq:aux_L1}
\| y - \mathpzc{y}_h \|_{L^1(\Omega)} = \sum_{\mathsf{z}} \mathsf{u}_{\mathsf{z}} (\mathfrak{z}(\mathsf{z})-\mathfrak{z}_h(\mathsf{z}))
+ (\chi (\mathfrak{z}-\mathfrak{z}_h), \mathpzc{y}_h)_{L^2(\Omega)} 
- 
\EO{(a(\cdot,\mathpzc{y}_h),\mathfrak{z}-\mathfrak{z}_h)_{L^2(\Omega)} 
=: 
\textrm{I} + \textrm{II} + \textrm{III}. 
}
\end{equation}

\EO{We first estimate $\textrm{II} + \textrm{III}$ upon exploiting the $H^2(\Omega)$-regularity of $\mathfrak{z}$: 
\begin{equation*}
| (\chi (\mathfrak{z}-\mathfrak{z}_h), \mathpzc{y}_h)_{L^2(\Omega)} - (a(\cdot,\mathpzc{y}_h),\mathfrak{z}-\mathfrak{z}_h)| 
\leq
 \left( \| \chi \mathpzc{y}_h\|_{L^2(\Omega)} + \| a(\cdot,\mathpzc{y}_h)\|_{L^2(\Omega)} \right)
 \| \mathfrak{z}-\mathfrak{z}_h  \|_{L^2(\Omega)}
 \lesssim h^2 |  \mathfrak{z} |_{H^2(\Omega)}.
\end{equation*}

The following comments are now in order. First, since $\Omega$ is convex and the underlying refinement is quasi-uniform, we invoke \cite[Theorem 8.5.3]{MR2373954} 
to obtain the existence of $h_{\triangle}>0$ such that $y_h \in W_{\scaleto{0}{4pt}}^{\scaleto{1}{4pt},\scaleto{p}{4pt}}(\Omega)$ uniformly with respect to discretization for $h \leq h_{\triangle}$.
With this result at hand, we now utilize that $\chi \in L^s(\Omega)$ with $s>2$ to immediately deduce that $\chi y_h \in L^2(\Omega)$ in two dimensions. 
On the other hand, in view of the fact that $a(\cdot,0) \in L^2(\Omega)$, it follows that $\| a(\cdot,y_h)\|_{L^2(\Omega)}$ is uniformly bounded with respect to discretization.

It thus suffices to estimate the term $\textrm{I}$ in \eqref{eq:aux_L1}. Observe that $\mathfrak{z}$ is such that}
\[
(\nabla v, \nabla \mathfrak{z})_{L^2(\Omega)}  = (\mathrm{sgn}(\mathsf{y} - \mathpzc{y}_h) - \chi \mathfrak{z},v)_{L^2(\Omega)} \quad \forall v \in H_0^1(\Omega).
\]
We can thus invoke \cite[Lemma 4.2]{MR3973329} to obtain
\begin{equation*}
\| \mathfrak{z} \|_{W^{2,t}(\Omega_1)} \leq C_{t} \left( \left \| \mathrm{sgn}(\mathsf{y} - \mathpzc{y}_h) - \chi \mathfrak{z} \right \|_{L^t(\Omega_0)}
+
\left \|\mathrm{sgn}(\mathsf{y} - \mathpzc{y}_h) - \chi \mathfrak{z} \right \|_{L^2(\Omega)}\right),
\label{eq:regularity_zeta_W2t}
\end{equation*}
where $t$ is as in the statement of the theorem. A local argument as the one developed in the proof of Theorem \ref{thm:error_estimate_adj} yields \EO{the bound
\[
\left| 
\mathrm{I}
\right|
\lesssim 
\| \mathbf{u} \|_{\mathbb{R}^{\ell}} \| \mathfrak{z} - \mathfrak{z}_h \|_{L^{\infty}(\Omega_1)} 
\lesssim 
h^2 |\log h|^2 \| \mathbf{u} \|_{\mathbb{R}^{\ell}}.
\]
This concludes the proof.}
\end{proof}

\subsection{The discrete optimal control problem}
\label{sec:discrete_optimal_control_problem}

We propose the following finite element discretization of the optimal control problem \eqref{eq:min}--\eqref{eq:state_equation}: Find
\begin{equation}
\label{eq:min_discrete}
\min \{ J (\mathsf{y}_h,\mathbf{u}_h): (\mathsf{y}_h,\mathbf{u}_h) \in \mathbb{V}_h \times \mathbf{U}_{ad} \}
\end{equation}
subject to the discrete state equation: Find $\mathsf{y}_h \in \mathbb{V}_h$ such that
\begin{equation}
\label{eq:state_equation_discret}
 \int_{\Omega} \nabla \mathsf{y}_h \cdot \nabla v_h \mathrm{d}x + \int_{\Omega} a(x,\mathsf{y}_h) v_h \mathrm{d}x = \sum_{\mathsf{z} \in \mathcal{D} } \mathsf{u}_{\mathsf{z},h} v_h(\mathsf{z}) \qquad \forall v_h \in \mathbb{V}_h.
\end{equation}

The existence of at least one solution 
follows from the arguments developed in the proof of Theorem \ref{thm:existence_optimal_pair}. Let us introduce the discrete map $\mathcal{S}_h: \mathbf{U}_{ad} \ni \mathbf{u}_h \mapsto \mathsf{y}_h \in \mathbb{V}_h$, where $\mathsf{y}_h$ denotes the solution to \eqref{eq:state_equation_discret}, and define the reduced cost functional $j_h : \mathbf{U}_{ad} \ni \mathbf{u}_h \mapsto J(\mathcal{S}_h \mathbf{u}_h,\mathbf{u}_h) \in \mathbb{R}$. With these ingredients at hand, we formulate first order optimality conditions: every discrete locally optimal control $\bar{\mathbf{u}}_h \in \mathbf{U}_{ad}$ satisfies
\begin{equation}
\label{eq:first_order_optimality_condition_discrete}
j_h'(\bar{\mathbf{u}}_h)(\mathbf{u}_h - \bar{\mathbf{u}}_h) \geq  0 
\quad
\forall \mathbf{u}_h = \{ u_{\mathsf{z},h} \}_{\mathsf{z} \in \mathcal{D}} \in \mathbf{U}_{ad}.
\end{equation}
This variational inequality leads to the following projection formula: If $\bar{\mathbf{u}}_h$ denotes a local minimizer of the discrete optimal control problem, then, for every $\mathsf{z} \in \mathcal{D}$, \EO{we have the projection formula}
\begin{equation}\label{eq:proj_formula_discrete}
\bar{\mathsf{u}}_{\mathsf{z},h} := \Pi_{[\mathsf{a}_{\mathsf{z}},\mathsf{b}_{\mathsf{z}}]}\left( - \alpha^{-1} \bar{\mathsf{p}}_h (\mathsf{z}) \right),
\end{equation}
where, for $t \in \mathbb{R}$, $\Pi_{[\mathsf{a}_{\mathsf{z}},\mathsf{b}_{\mathsf{z}}]}(t):= \max \{ \mathsf{a}_{\mathsf{z}}, \min \{ \mathsf{b}_{\mathsf{z}}, t \} \}$. Here, $\bar{\mathsf{p}}_h$ denotes the solution to the following discrete adjoint problem: Find $\bar{\mathsf{p}}_h \in \mathbb{V}_h$ such that
\begin{equation}
\label{eq:adjoint_discrete_control}
 \int_{\Omega} \nabla \bar{\mathsf{p}}_h \cdot \nabla v_h \mathrm{d}x + \int_{\Omega} \tfrac{\partial a}{\partial y}(x,\bar{\mathsf{y}}_h) \bar{\mathsf{p}}_h v_h \mathrm{d}x = \int_{\Omega} (\bar{\mathsf{y}}_h - \mathsf{y}_d) v_h \mathrm{d}x \quad \forall v_h \in \mathbb{V}_h.
\end{equation}

\subsection{An auxiliary error estimate}
\label{sec:auxiliary_error_estimate}
In this section, we derive \EO{an auxiliary error estimate} that will be of fundamental importance to perform an a priori error analysis for the discretization \EO{introduced in Section \ref{sec:discrete_optimal_control_problem}.}

\begin{theorem}[\EO{an auxiliary error estimate: $d=2$}]\label{thm:auxiliary_error_estimate}
\EO{Let $\Omega \subset \mathbb{R}^2$ be an open, bounded, and convex polygonal domain. Let $a$ be such that \textnormal{\ref{A1} and \ref{A2}} hold. Assume, in addition, that $\partial a/ \partial y = \partial a/ \partial y(x,y)$ is Lipschitz in $y$ for a.e.~$x \in \Omega$ and that $a$ satisfies $a(\cdot,0) \in L^2(\Omega)$, $\partial a/ \partial y(\cdot,y) \in L^{\mathfrak{q}}(\Omega)$, for every $y \in \mathbb{R}$, where $\mathfrak{q}>2$, and that \eqref{eq:a_Lipschitz} holds with $\psi \in L^s(\Omega)$, where $s>2$. Let $\Omega_0$ and $\Omega_1$ be smooth subdomains such that $\mathcal{D} \subset \Omega_1 \Subset \Omega_0 \Subset \Omega$. If, in addition, $\psi, \mathsf{y}_d, \partial a/\partial y(\cdot,y) \in L^t(\Omega_0)$, for every $t < \infty$, then we have}
\begin{equation}
|(j'(\mathbf{u}) - j_h'(\mathbf{u})) \mathbf{v}| \lesssim h^2|\log h|^3 \| \mathbf{v} \|_{\mathbb{R}^{\ell}}.
\label{eq:error_estimate_j}
\end{equation}
\EO{for every $h \leq h_{\dagger}$. Here, $\mathbf{u}, \mathbf{v} \in \mathbb{R}^{\ell}$ and the hidden constant is independent of $h$.}
\end{theorem}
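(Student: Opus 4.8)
The plan is to rewrite the difference of reduced derivatives through adjoint states and reduce the whole estimate to a \emph{local} maximum-norm bound for an adjoint error. Arguing exactly as in the proof of Theorem \ref{thm:first_order_op_c}, a duality computation gives $j'(\mathbf{u})\mathbf{v} = \sum_{\mathsf{z}\in\mathcal{D}}(\mathsf{p}(\mathsf{z}) + \alpha \mathsf{u}_{\mathsf{z}})\mathsf{v}_{\mathsf{z}}$, where $\mathsf{p}$ solves the adjoint equation \eqref{eq:adj_eq} with $\mathsf{y} = \mathcal{S}\mathbf{u}$; the discrete counterpart satisfies $j_h'(\mathbf{u})\mathbf{v} = \sum_{\mathsf{z}\in\mathcal{D}}(\mathsf{p}_h(\mathsf{z}) + \alpha \mathsf{u}_{\mathsf{z}})\mathsf{v}_{\mathsf{z}}$, where $\mathsf{p}_h \in \mathbb{V}_h$ solves the discrete adjoint equation whose coefficient $\partial a/\partial y(\cdot,\mathsf{y}_h)$ and datum $\mathsf{y}_h - \mathsf{y}_d$ both involve the \emph{discrete} state $\mathsf{y}_h = \mathcal{S}_h\mathbf{u}$. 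The $\alpha$-terms cancel, so that, since $\mathcal{D}\subset\Omega_1$ is finite,
\[
|(j'(\mathbf{u}) - j_h'(\mathbf{u}))\mathbf{v}| = \Big| \sum_{\mathsf{z}\in\mathcal{D}}(\mathsf{p}(\mathsf{z}) - \mathsf{p}_h(\mathsf{z}))\mathsf{v}_{\mathsf{z}} \Big| \lesssim \| \mathsf{p} - \mathsf{p}_h \|_{L^{\infty}(\Omega_1)} \| \mathbf{v} \|_{\mathbb{R}^{\ell}}.
\]
It therefore suffices to prove $\| \mathsf{p} - \mathsf{p}_h \|_{L^{\infty}(\Omega_1)} \lesssim h^2|\log h|^3$.

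To disentangle the two error sources---the finite element discretization and the replacement of $\mathsf{y}$ by $\mathsf{y}_h$ inside $\partial a/\partial y$---I would introduce the auxiliary discrete adjoint $\mathpzc{p}_h \in \mathbb{V}_h$ solving \eqref{eq:adjoint_discrete}, i.e.~the discrete adjoint whose coefficient uses the \emph{continuous} state $\mathsf{y}$, and split $\| \mathsf{p} - \mathsf{p}_h \|_{L^{\infty}(\Omega_1)} \leq \| \mathsf{p} - \mathpzc{p}_h \|_{L^{\infty}(\Omega_1)} + \| \mathpzc{p}_h - \mathsf{p}_h \|_{L^{\infty}(\Omega_1)}$. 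The first term is precisely the quantity controlled by Theorem \ref{thm:error_estimate_adj}, whose hypotheses are contained in those assumed here, so that $\| \mathsf{p} - \mathpzc{p}_h \|_{L^{\infty}(\Omega_1)} \lesssim h^2|\log h|^2$.

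The crux is the second term. Subtracting the equations for $\mathpzc{p}_h$ and $\mathsf{p}_h$ and isolating the coefficient defect, one finds that $e_h := \mathpzc{p}_h - \mathsf{p}_h \in \mathbb{V}_h$ solves
\[
(\nabla v_h, \nabla e_h)_{L^2(\Omega)} + \big(\tfrac{\partial a}{\partial y}(\cdot,\mathsf{y}) e_h, v_h\big)_{L^2(\Omega)} = (\mathsf{y} - \mathsf{y}_h, v_h)_{L^2(\Omega)} - \big([\tfrac{\partial a}{\partial y}(\cdot,\mathsf{y}) - \tfrac{\partial a}{\partial y}(\cdot,\mathsf{y}_h)]\mathsf{p}_h, v_h\big)_{L^2(\Omega)}
\]
for all $v_h\in\mathbb{V}_h$; call the right-hand side $F(v_h)$. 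Using the Lipschitz continuity of $\partial a/\partial y$ in $y$, both contributions to $F$ are governed by the state error in $L^1(\Omega)$, namely $|F(v_h)| \lesssim (1 + \| \mathsf{p}_h \|_{L^{\infty}(\Omega)})\,\| \mathsf{y} - \mathsf{y}_h \|_{L^1(\Omega)}\,\| v_h \|_{L^{\infty}(\Omega)}$. To convert this into a pointwise bound I would test against the discrete Green's function $g_h^{x_0}\in\mathbb{V}_h$ of the coercive form on the left (note $\partial a/\partial y(\cdot,\mathsf{y})\geq 0$), so that $e_h(x_0)=F(g_h^{x_0})$ at any vertex $x_0$. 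Invoking the two-dimensional estimate $\| g_h^{x_0} \|_{L^{\infty}(\Omega)} \lesssim |\log h|$ together with Theorem \ref{thm:L1_error_estimate}, which gives $\| \mathsf{y} - \mathsf{y}_h \|_{L^1(\Omega)} \lesssim h^2|\log h|^2$, one obtains $\| e_h \|_{L^{\infty}(\Omega)} \lesssim (1 + \| \mathsf{p}_h \|_{L^{\infty}(\Omega)})\,h^2|\log h|^3$. The factor $\| \mathsf{p}_h \|_{L^{\infty}(\Omega)}$ is then removed by an absorption step: writing $\| \mathsf{p}_h \|_{L^{\infty}(\Omega)} \leq \| \mathpzc{p}_h \|_{L^{\infty}(\Omega)} + \| e_h \|_{L^{\infty}(\Omega)}$ and noting that $\| \mathpzc{p}_h \|_{L^{\infty}(\Omega)} \lesssim 1$ uniformly (it is the Galerkin approximation of $\mathsf{p}\in L^{\infty}(\Omega)\cap W^{1,\mathtt{q}}_0(\Omega)$, $\mathtt{q}>2$, by Proposition \ref{pro:regularity_p}, for which maximum-norm stability of the Galerkin projection applies), one absorbs $\| e_h \|_{L^{\infty}(\Omega)}$ into the left-hand side for all $h \leq h_{\dagger}$ small enough that the relevant constant times $h^2|\log h|^3$ is below $\tfrac12$. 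This yields $\| e_h \|_{L^{\infty}(\Omega_1)} \lesssim h^2|\log h|^3$, and combining with the bound for $\| \mathsf{p} - \mathpzc{p}_h \|_{L^{\infty}(\Omega_1)}$ proves the claim.

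The hard part is exactly this second term. The coefficient defect couples the adjoint error to the state error through the nonlinearity, and the state error is only controllable in $L^1(\Omega)$; passing from an $L^1$ datum to a maximum-norm bound for $e_h$ forces the use of the discrete Green's function, whose logarithmic growth in two dimensions produces the extra power of $|\log h|$ relative to Theorem \ref{thm:error_estimate_adj} and necessitates the absorption step (and the uniform $L^{\infty}$ control of $\mathpzc{p}_h$) to close the estimate.
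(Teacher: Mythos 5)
Your proposal is correct and follows essentially the same route as the paper: the identical representation of $j'(\mathbf{u})\mathbf{v}$ and $j_h'(\mathbf{u})\mathbf{v}$ through adjoint states, the identical splitting via the intermediate discrete adjoint $\mathpzc{p}_h$ solving \eqref{eq:adjoint_discrete} (Theorem \ref{thm:error_estimate_adj} for the first piece; the Lipschitz property of $\partial a/\partial y$ together with the $L^1$ state-error estimate of Theorem \ref{thm:L1_error_estimate} for the coefficient-defect piece). The only deviation is cosmetic: where you test against a discrete Green's function with $\| g_h^{x_0} \|_{L^{\infty}(\Omega)} \lesssim |\log h|$, the paper applies the two-dimensional discrete Sobolev inequality $\| v_h \|_{L^{\infty}(\Omega)} \lesssim (1+|\log h|)^{1/2} \| \nabla v_h \|_{L^{2}(\Omega)}$ directly to the energy identity --- the two devices are equivalent and both produce the extra logarithm giving $h^2|\log h|^3$; your explicit absorption argument for $\| \mathsf{p}_h \|_{L^{\infty}(\Omega)}$ is, if anything, more careful than the paper, which leaves the uniform $L^{\infty}$ bound on $\hat{\mathsf{p}}_h$ implicit in the hidden constant.
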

\begin{proof}
We begin the proof by noticing that 
\[
j'(\mathbf{u}) \mathbf{v} = \sum_{\mathsf{z} \in \mathcal{D}}( \mathsf{p}(\mathsf{z}) + \alpha \mathsf{u}_{\mathsf{z}}) \mathsf{v}_{\mathsf{z}},
\qquad
j_h'(\mathbf{u}) \mathbf{v} = \sum_{\mathsf{z} \in \mathcal{D}}( \hat{\mathsf{p}}_h(\mathsf{z}) + \alpha \mathsf{u}_{\mathsf{z}}) \mathsf{v}_{\mathsf{z}},
\]
where $\mathsf{p} \in H_0^1(\Omega) \cap H^2(\Omega)$ solves \eqref{eq:adj_eq} with $\mathsf{y} = \mathcal{S} \mathbf{u}$ and $\hat{\mathsf{p}}_h$ solves \eqref{eq:adjoint_discrete} with $\mathsf{y}$ being replaced by $\mathpzc{y}_h$, i.e., the solution to \eqref{eq:state_equation_discrete}. With these identities at hand, \EO{we obtain}
\begin{equation*}
|(j'(\mathbf{u}) - j_h'(\mathbf{u})) \mathbf{v}| 
 \leq
\sum_{\mathsf{z} \in \mathcal{D}} \left[ | \mathsf{p}(\mathsf{z}) - \mathpzc{p}_h(\mathsf{z})|
+
 |  \mathpzc{p}_h(\mathsf{z}) - \hat{\mathsf{p}}_h(\mathsf{z})| 
 \right] |\mathsf{v}_{\mathsf{z}}|
  =: \sum_{\mathsf{z} \in \mathcal{D}}
 \left[ \mathrm{I}_{\mathsf{z}} + \mathrm{II}_{\mathsf{z}} \right] |\mathsf{v}_{\mathsf{z}}|,
\end{equation*}
where $\mathpzc{p}_h$ denotes the solution to \eqref{eq:adjoint_discrete}.
Let $\mathsf{z} \in \mathcal{D}$. Invoke Theorem \ref{thm:error_estimate_adj} to arrive at \EO{$\mathrm{I}_{\mathsf{z}} \lesssim h^{2}|\log h|^2$.}
We now control $\mathrm{II}_{\mathsf{z}}$. To accomplish this task, we observe that $\mathpzc{p}_h - \hat{\mathsf{p}}_h \in \mathbb{V}_h$ is such that
\begin{multline*}
( \nabla (\mathpzc{p}_h -\hat{\mathsf{p}}_h) , \nabla v_h )_{L^2(\Omega)} 
+ 
\left( \tfrac{\partial a}{\partial y}(\cdot,\mathsf{y}) (\mathpzc{p}_h - \hat{\mathsf{p}}_h), v_h \right)_{L^2(\Omega)}
 \\
 = \left(\mathsf{y} - \mathpzc{y}_h, v_h\right)_{L^2(\Omega)} 
 +
 \left( \left( \tfrac{\partial a}{\partial y}(\cdot,\mathpzc{y}_h) - \tfrac{\partial a}{\partial y}(\cdot,\mathsf{y}) \right) \hat{\mathsf{p}}_h, v_h \right)_{L^2(\Omega)} \quad \forall v_h \in \mathbb{V}_h.
 \end{multline*}
We thus utilize an inverse inequality and a stability estimate for the previous problem to obtain
\begin{multline}
 \| \mathpzc{p}_h -\hat{\mathsf{p}}_h  \|^2_{L^{\infty}(\Omega)} 
 \lesssim 
 \mathfrak{i}_h^2 \| \nabla (\mathpzc{p}_h -\hat{\mathsf{p}}_h  )\|^2_{L^2(\Omega)} 
  \lesssim 
 \mathfrak{i}_h^2 \| \mathsf{y} - \mathpzc{y}_h  \|_{L^1(\Omega)}  \| \mathpzc{p}_h -\hat{\mathsf{p}}_h  \|_{L^{\infty}(\Omega)} 
 \\
 +  \mathfrak{i}_h^2  \left\| \left( \tfrac{\partial a}{\partial y}(\cdot,\mathpzc{y}_h) - \tfrac{\partial a}{\partial y}(\cdot,\mathsf{y}) \right) \hat{\mathsf{p}}_h \right\|_{L^1(\Omega)} \| \mathpzc{p}_h -\hat{\mathsf{p}}_h  \|_{L^{\infty}(\Omega)},
 \end{multline}
where $\mathfrak{i}_h = (1 + |\log h|)^{\frac{1}{2}}$ 
\cite[Lemma 4.9.2]{MR2373954}.
\EO{Invoke the fact that $\partial a/ \partial y = \partial a/ \partial y(x,y)$ is Lipschitz in $y$ for a.e.~$x \in \Omega$} and then the error estimate \eqref{eq:L1_error_estimate} to obtain
\[
 \EO{\mathrm{II}_{\mathsf{z}}}
 \lesssim
 \| \mathpzc{p}_h -\hat{\mathsf{p}}_h  \|_{L^{\infty}(\Omega)} \lesssim  \mathfrak{i}_h^2 \left(1 + \| \hat{\mathsf{p}}_h  \|_{L^{\infty}(\Omega} \right) \| \mathsf{y} - \mathpzc{y}_h  \|_{L^1(\Omega)} 
 \lesssim
 \mathfrak{i}_h^2 h^2 |\log h|^2.
\]
Collect the derived estimates for $\mathrm{I}_{\mathsf{z}}$ and $\mathrm{II}_{\mathsf{z}}$ to obtain \eqref{eq:error_estimate_j}. 
\end{proof}

\subsection{\EO{The discrete optimal control problem: convergence of discretizations}} 

We now provide a convergence result that, in essence, guarantees that a sequence of global solutions $\{ \bar{\mathbf{u}}_h \}_{h>0}$ of the discrete optimal control problems contains subsequences that converge, as $h \rightarrow 0$, to global solutions of the continuous optimal control problem.

\begin{theorem}[\EO{convergence to global solutions}]\label{thm:convergence_control}
Let $\Omega$ be an open, bounded, and convex polytopal domain. \EO{Let $a$ be such that \textnormal{\ref{A1}, \ref{A2},} and \eqref{eq:a_Lipschitz} hold. Let $h>0$ and let $\bar{\mathbf{u}}_h$ be a global solution of the discrete optimal control problem \eqref{eq:min_discrete}--\eqref{eq:state_equation_discret}. Then, there exist nonrelabeled subsequences $\{ \bar{\mathbf{u}}_h \}_{h>0}$ such that $\bar{\mathbf{u}}_h \rightarrow \bar{\mathbf{u}}$ in $\mathbb{R}^{\ell}$, as $h \rightarrow 0$, with $\bar{\mathbf{u}}$ being a global solution to the continuous optimal control problem \eqref{eq:min}--\eqref{eq:state_equation}. In addition, we have}
$
\lim _{h \rightarrow 0}  j_h( \bar{\mathbf{u}} _h ) =  j( \bar{\mathbf{u}} ).
$
\end{theorem}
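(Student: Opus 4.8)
The plan is to exploit the compactness of the finite-dimensional admissible set $\mathbf{U}_{ad}$ together with the state convergence result of Theorem \ref{thm:convergence_state_equation}, following the classical arguments for discretizations of optimal control problems. First I would extract a convergent subsequence: since $\mathbf{U}_{ad}$ is a nonempty, closed, and bounded subset of $\mathbb{R}^{\ell}$, it is compact, and hence the sequence $\{ \bar{\mathbf{u}}_h \}_{h>0}$ admits a nonrelabeled subsequence such that $\bar{\mathbf{u}}_h \rightarrow \bar{\mathbf{u}}$ in $\mathbb{R}^{\ell}$ as $h \rightarrow 0$, with $\bar{\mathbf{u}} \in \mathbf{U}_{ad}$.

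Next I would pass to the limit in the states. Writing $\bar{\mathsf{y}}_h = \mathcal{S}_h \bar{\mathbf{u}}_h$ and $\bar{\mathsf{y}} = \mathcal{S} \bar{\mathbf{u}}$, an application of Theorem \ref{thm:convergence_state_equation} to the convergent sequence $\bar{\mathbf{u}}_h \rightarrow \bar{\mathbf{u}}$ yields $\bar{\mathsf{y}}_h \rightarrow \bar{\mathsf{y}}$ in $L^2(\Omega)$ as $h \rightarrow 0$. Because the cost functional $J$ in \eqref{def:cost_func} is continuous with respect to strong convergence in $L^2(\Omega) \times \mathbb{R}^{\ell}$, this immediately gives
\[
\lim_{h \rightarrow 0} j_h(\bar{\mathbf{u}}_h) = \lim_{h \rightarrow 0} J(\bar{\mathsf{y}}_h, \bar{\mathbf{u}}_h) = J(\bar{\mathsf{y}}, \bar{\mathbf{u}}) = j(\bar{\mathbf{u}}),
\]
which already establishes the last identity claimed in the statement.

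It then remains to show that $\bar{\mathbf{u}}$ is a global solution of \eqref{eq:min}--\eqref{eq:state_equation}. Let $\mathbf{u} \in \mathbf{U}_{ad}$ be arbitrary. Applying Theorem \ref{thm:convergence_state_equation} this time to the constant sequence $\mathbf{u}_h \equiv \mathbf{u}$ gives $\mathcal{S}_h \mathbf{u} \rightarrow \mathcal{S} \mathbf{u}$ in $L^2(\Omega)$, whence $j_h(\mathbf{u}) \rightarrow j(\mathbf{u})$ as $h \rightarrow 0$. Since $\bar{\mathbf{u}}_h$ is a global minimizer of the discrete problem, we have $j_h(\bar{\mathbf{u}}_h) \leq j_h(\mathbf{u})$ for every $h$; passing to the limit and using the two convergences just established yields $j(\bar{\mathbf{u}}) \leq j(\mathbf{u})$. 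As $\mathbf{u} \in \mathbf{U}_{ad}$ was arbitrary, $\bar{\mathbf{u}}$ is a global solution of the continuous problem.

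The argument is essentially routine once Theorem \ref{thm:convergence_state_equation} is available, so the main point requiring care is that the latter be invoked with \emph{uniform} control of the hidden constants as $h \rightarrow 0$. This is guaranteed because both sequences of controls appearing above, namely $\{ \bar{\mathbf{u}}_h \}_{h>0}$ and the constant sequence $\mathbf{u}_h \equiv \mathbf{u}$, are uniformly bounded in $\mathbb{R}^{\ell}$, as they live in the bounded set $\mathbf{U}_{ad}$; consequently the constants entering the error estimate underlying Theorem \ref{thm:convergence_state_equation} remain bounded along the discretization, which legitimizes the two limit passages.
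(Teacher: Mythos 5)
Your proof is correct and follows essentially the same route as the paper: compactness of $\mathbf{U}_{ad}$ in $\mathbb{R}^{\ell}$ to extract the subsequence, Theorem \ref{thm:convergence_state_equation} to pass to the limit in the states, and continuity of $J$ to conclude $j_h(\bar{\mathbf{u}}_h) \rightarrow j(\bar{\mathbf{u}})$ and global optimality. The only (harmless) difference is that you compare $\bar{\mathbf{u}}_h$ against an arbitrary fixed $\mathbf{u} \in \mathbf{U}_{ad}$ via the constant sequence, thereby verifying optimality directly from the definition, whereas the paper compares against a continuous global solution $\tilde{\mathbf{u}}$ together with an approximating sequence $\tilde{\mathbf{u}}_h \rightarrow \tilde{\mathbf{u}}$ --- a distinction without substance here, since the control set is not discretized and the constant sequence is admissible in both arguments.
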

\begin{proof}
Since $\{ \bar{\mathbf{u}}_h \}_{h>0} \subset \mathbf{U}_{ad}$ is uniformly bounded in $\mathbb{R}^{\ell}$, then there exists a nonrelabeled subsequence $\{ \bar{\mathbf{u}}_h \}_{h>0}$ such that $\bar{\mathbf{u}}_h \rightarrow \bar{\mathbf{u}}$ in $\mathbb{R}^{\ell}$ as $h \rightarrow 0$. Let us prove that $\bar{\mathbf{u}}$ is a global solution to \eqref{eq:min}--\eqref{eq:state_equation}. \EO{To accomplish this task, we} let $\tilde{\mathbf{u}} \in \mathbf{U}_{ad}$ be a global solution to \eqref{eq:min}--\eqref{eq:state_equation} and let $\{ \tilde{\mathbf{u}}_h \}_{h>0} \subset \mathbf{U}_{ad}$ be such that $\tilde{\mathbf{u}}_h \rightarrow \tilde{\mathbf{u}}$ in $\mathbb{R}^{\ell}$ as $h \rightarrow 0$. \EO{Since $\tilde{\mathbf{u}}$ is optimal for the continuous problem \eqref{eq:min}--\eqref{eq:state_equation} and, for every $h>0$, $\bar{\mathbf{u}}_h$ is a global solution to the discrete optimal control problem \eqref{eq:min_discrete}--\eqref{eq:state_equation_discret}, we obtain}
\begin{equation}
j(\tilde{\mathbf{u}}) \leq j(\bar{\mathbf{u}}) = 
\lim_{h \rightarrow 0} j_h(\bar{\mathbf{u}}_h) 
\leq  
\lim_{h \rightarrow 0} j_h(\tilde{\mathbf{u}}_h) 
=
j(\tilde{\mathbf{u}}).
\label{eq:several_inequalities}
\end{equation}
We observe that, since $\bar{\mathbf{u}}_h \rightarrow \bar{\mathbf{u}}$ and $\tilde{\mathbf{u}}_h \rightarrow \tilde{\mathbf{u}}$ in $\mathbb{R}^{\ell}$ as $h \rightarrow 0$, Theorem \ref{thm:convergence_state_equation} reveals that $\mathcal{S}_h \bar{\mathbf{u}}_h \rightarrow \mathcal{S} \bar{\mathbf{u}}$ and that $\mathcal{S}_h \tilde{\mathbf{u}}_h \rightarrow \mathcal{S} \tilde{\mathbf{u}}$ in $L^2(\Omega)$ as $h \rightarrow 0$. Consequently, it is immediate that $j_h(\bar{\mathbf{u}}_h) \rightarrow j(\bar{\mathbf{u}})$ and that $j_h(\tilde{\mathbf{u}}_h) \rightarrow j(\tilde{\mathbf{u}})$ as $h \rightarrow 0$. In view of \eqref{eq:several_inequalities}, we conclude that $\bar{\mathbf{u}}$ is a global solution to \eqref{eq:min}--\eqref{eq:state_equation}.
\end{proof}

\EO{We now provide a second convergence result: every strict local minimum of the continuous problem \eqref{eq:min}--\eqref{eq:state_equation} can be approximated by local minima of the discrete optimal control problems \eqref{eq:min_discrete}--\eqref{eq:state_equation_discret}.}

\begin{theorem}[\EO{convergence to strict local solutions}]\label{thm:convergence_control_2}
\EO{Let the assumptions of Theorem \ref{thm:convergence_control} hold. Let $\bar{\mathbf{u}}$ be a strict local minimum of \eqref{eq:min}--\eqref{eq:state_equation}. Then, there exist $h_{\nabla}>0$ and a sequence  of local minima $\{ \bar{\mathbf{u}}_h \}_{0<h \leq h_{\nabla}}$ of the discrete control problems such that $\bar{\mathbf{u}}_h \rightarrow \bar{\mathbf{u}}$ in $\mathbb{R}^{\ell}$ and 
$
j_h( \bar{\mathbf{u}} _h ) \rightarrow  j( \bar{\mathbf{u}} )
$
in $\mathbb{R}$ as $h \rightarrow 0$.}
\end{theorem}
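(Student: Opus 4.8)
The plan is to localize the continuous strict minimum and then solve auxiliary discrete problems that are artificially restricted to a fixed ball around $\bar{\mathbf{u}}$. Since $\bar{\mathbf{u}}$ is a strict local minimum of \eqref{eq:min}--\eqref{eq:state_equation}, there exists $\rho>0$ such that $j(\bar{\mathbf{u}}) < j(\mathbf{u})$ for every $\mathbf{u} \in \mathbf{U}_{ad}$ with $0 < \| \mathbf{u} - \bar{\mathbf{u}} \|_{\mathbb{R}^{\ell}} \leq \rho$. I would introduce the localized admissible set $\mathbf{U}_{ad}^{\rho} := \{ \mathbf{u} \in \mathbf{U}_{ad} : \| \mathbf{u} - \bar{\mathbf{u}} \|_{\mathbb{R}^{\ell}} \leq \rho \}$, which is nonempty (it contains $\bar{\mathbf{u}}$), closed, and bounded in $\mathbb{R}^{\ell}$, hence compact. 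For each $h>0$ I would then consider the auxiliary discrete problem of minimizing $j_h$ over $\mathbf{U}_{ad}^{\rho}$; by the continuity of $j_h$ and the Weierstrass argument already employed in the proof of Theorem \ref{thm:existence_optimal_pair}, this problem admits a global minimizer $\bar{\mathbf{u}}_h \in \mathbf{U}_{ad}^{\rho}$.

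The next step is to prove $\bar{\mathbf{u}}_h \rightarrow \bar{\mathbf{u}}$ in $\mathbb{R}^{\ell}$. Since $\{ \bar{\mathbf{u}}_h \}_{h>0} \subset \mathbf{U}_{ad}^{\rho}$ is contained in a compact set, any sequence $h \rightarrow 0$ admits a nonrelabeled subsequence with $\bar{\mathbf{u}}_h \rightarrow \mathbf{u}^{\star} \in \mathbf{U}_{ad}^{\rho}$. Invoking Theorem \ref{thm:convergence_state_equation} I obtain $\mathcal{S}_h \bar{\mathbf{u}}_h \rightarrow \mathcal{S} \mathbf{u}^{\star}$ in $L^2(\Omega)$, and therefore $j_h(\bar{\mathbf{u}}_h) \rightarrow j(\mathbf{u}^{\star})$ as $h \rightarrow 0$. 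Because the control variable is already discrete, the recovery sequence is trivial: $\bar{\mathbf{u}} \in \mathbf{U}_{ad}^{\rho}$ is itself admissible for every discrete problem, and a further application of Theorem \ref{thm:convergence_state_equation} yields $j_h(\bar{\mathbf{u}}) \rightarrow j(\bar{\mathbf{u}})$. The global optimality of $\bar{\mathbf{u}}_h$ over $\mathbf{U}_{ad}^{\rho}$ gives $j_h(\bar{\mathbf{u}}_h) \leq j_h(\bar{\mathbf{u}})$, and passing to the limit produces $j(\mathbf{u}^{\star}) \leq j(\bar{\mathbf{u}})$. Since $\mathbf{u}^{\star} \in \mathbf{U}_{ad}^{\rho}$ and $\bar{\mathbf{u}}$ is the \emph{strict} minimizer on this set, we conclude $\mathbf{u}^{\star} = \bar{\mathbf{u}}$; as the limit is independent of the subsequence, the whole sequence converges, $\bar{\mathbf{u}}_h \rightarrow \bar{\mathbf{u}}$, and along the way $j_h(\bar{\mathbf{u}}_h) \rightarrow j(\bar{\mathbf{u}})$.

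Finally, I would upgrade each $\bar{\mathbf{u}}_h$ from a minimizer of the artificially localized problem to a genuine local minimum of the discrete control problem \eqref{eq:min_discrete}--\eqref{eq:state_equation_discret}. Since $\bar{\mathbf{u}}_h \rightarrow \bar{\mathbf{u}}$, there exists $h_{\nabla}>0$ such that $\| \bar{\mathbf{u}}_h - \bar{\mathbf{u}} \|_{\mathbb{R}^{\ell}} < \rho$ for all $h \leq h_{\nabla}$; that is, $\bar{\mathbf{u}}_h$ lies in the interior of the ball of radius $\rho$. Consequently, $\bar{\mathbf{u}}_h$ minimizes $j_h$ over a full neighborhood of itself in $\mathbf{U}_{ad}$, so it is a local minimum of the discrete optimal control problem, which is precisely the desired conclusion. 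The main obstacle I anticipate is the passage $j_h(\bar{\mathbf{u}}_h) \rightarrow j(\mathbf{u}^{\star})$, since it hinges on the $L^2$-convergence of the discrete states supplied by Theorem \ref{thm:convergence_state_equation}; everything else is the standard compactness-and-strictness bookkeeping, with the pleasant simplification that the finite-dimensional discrete control requires no nontrivial interpolation of the recovery sequence.
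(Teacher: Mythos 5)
Your proposal is correct and follows essentially the same route as the paper: localize around the strict minimum, minimize $j_h$ over the compact set $\mathbf{U}_{ad}\cap B_{\rho}(\bar{\mathbf{u}})$, use the uniqueness of the minimizer of the localized continuous problem to get convergence of the whole sequence, and observe that the ball constraint is inactive for small $h$ so each $\bar{\mathbf{u}}_h$ is a genuine local minimum of \eqref{eq:min_discrete}--\eqref{eq:state_equation_discret}. The only difference is cosmetic: where the paper cites the arguments from the proof of Theorem \ref{thm:convergence_control}, you spell them out, correctly noting that no recovery sequence is needed since the control space is already discrete and $\bar{\mathbf{u}}$ itself is admissible for every $h$.
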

\begin{proof}
Since $\bar{\mathbf{u}}$ is a strict local minimum of problem \eqref{eq:min}--\eqref{eq:state_equation}, there exists $\epsilon >0$ such that $\bar{\mathbf{u}}$ is the unique solution to $\min \{ j(\mathbf{u}): \mathbf{u} \in \mathbf{U}_{ad} \cap B_{\epsilon}(\bar{\mathbf{u}}) \}$, where $B_{\epsilon}(\bar{\mathbf{u}}):= \{ \mathbf{u} \in \mathbf{U}_{ad} : \| \mathbf{u} - \bar{\mathbf{u}} \|_{\mathbb{R}^{\ell}} \leq \epsilon \}$. \EO{Let us now introduce, for $h>0$, the discrete optimization problems:}
\begin{equation}
\label{eq:discrete_problems_Bepsilon}
\min \{ j_h(\mathbf{u}): \mathbf{u} \in \mathbf{U}_{ad} \cap B_{\epsilon}(\bar{\mathbf{u}}) \}.
\end{equation}
In view of the fact that $\mathbf{U}_{ad} \cap B_{\epsilon}(\bar{\mathbf{u}})$ is nonempty and compact, problem \eqref{eq:discrete_problems_Bepsilon} admits at least one solution. \EO{Let $h>0$ and let $\bar{\mathbf{u}}_h \in \mathbf{U}_{ad}$ be a solution to \eqref{eq:discrete_problems_Bepsilon}.} The arguments elaborated in the proof of Theorem \ref{thm:convergence_control} reveal the existence of nonrelabeled subsequences $\{ \bar{\mathbf{u}}_h \}_{h>0}$ that converge to solutions to $\min \{ j(\mathbf{u}): \mathbf{u} \in \mathbf{U}_{ad} \cap B_{\epsilon}(\bar{\mathbf{u}}) \}$. Since the latter minimization problem admits a unique solution, \EO{we must have that $\bar{\mathbf{u}}_h \rightarrow \bar{\mathbf{u}}$ in $\mathbb{R}^{\ell}$ as $h \rightarrow 0$ for the whole sequence. This, in particular, guarantees that the} constraint $\bar{\mathbf{u}}_h \in B_{\epsilon}(\bar{\mathbf{u}})$ is not active for $h$ sufficiently small. As a result, $\bar{\mathbf{u}}_h$ is a solution to the discrete optimal control problem \eqref{eq:min_discrete}--\eqref{eq:state_equation_discret}. This concludes the proof.
\end{proof}

\subsection{\EO{The discrete optimal control problem: error estimates}}
\label{sec:discrete_optimal_control_problem_error_estimates}
Let $\bar{\mathbf{u}}$ be a local minimum of the continuous optimal control problem and let $\{ \bar{\mathbf{u}}_h \}_{h>0}$ be a sequence of local minima of the discrete optimal control problems such that $\|  \bar{\mathbf{u}} - \bar{\mathbf{u}}_h  \|_{\mathbb{R}^{\ell}} \rightarrow 0$ as $h \rightarrow 0$; see Theorems \ref{thm:convergence_control} and \ref{thm:convergence_control_2}. In what follows, we derive an error estimate for the error $ \bar{\mathbf{u}} - \bar{\mathbf{u}}_h$ in $\mathbb{R}^{\ell}$. 

\begin{proposition}[\EO{an instrumental bound}]
\EO{Let $\Omega$ be an open, bounded, and convex polytopal domain. Let $a$ be such that \textnormal{\ref{A1}, \ref{A2}, \ref{A3}, and} \eqref{eq:a_Lipschitz} hold. Assume that, in addition, $\partial a/\partial y(\cdot,y) \in L^{\mathfrak{q}}(\Omega)$, for every $y \in \mathbb{R}$, where $\mathfrak{q}>2$.} If $\bar{\mathbf{u}} \in\mathbf{U}_{ad}$ is a local minimum of \eqref{eq:min}--\eqref{eq:state_equation} that satisfies the second order condition \eqref{eq:second_order_sufficient_condition}, or equivalently \eqref{eq:second_order_condition_equivalent}, then there exists $h_\Delta >0$ such that
\begin{equation}
\label{eq:instrumental_error_estimate}
\tfrac{\kappa}{2} \| \bar{\mathbf{u}} - \bar{\mathbf{u}}_h\|^2_{\mathbb{R}^{\ell}} \leq  \left( j'(\bar{\mathbf{u}}_h) - j'(\bar{\mathbf{u}}) \right) (\bar{\mathbf{u}}_h - \bar{\mathbf{u}})
\end{equation}
for every $h < h_{\Delta}$.
\label{pro:instrumental_bound}
\end{proposition}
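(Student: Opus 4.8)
The plan is to combine a mean value argument for the reduced cost $j$ with the coercivity supplied by the equivalent second order condition \eqref{eq:second_order_condition_equivalent}. Set $\mathbf{v}_h := \bar{\mathbf{u}}_h - \bar{\mathbf{u}}$, with components $\mathsf{v}_{\mathsf{z},h} = \bar{\mathsf{u}}_{\mathsf{z},h} - \bar{\mathsf{u}}_{\mathsf{z}}$. Since Proposition \ref{thm:second_order_diff_j} ensures $j \in C^2$, the mean value theorem provides $\hat{\mathbf{u}}_h = \bar{\mathbf{u}} + \theta_h \mathbf{v}_h$, with $\theta_h \in [0,1]$, such that
\[
(j'(\bar{\mathbf{u}}_h) - j'(\bar{\mathbf{u}}))\mathbf{v}_h = j''(\hat{\mathbf{u}}_h)\mathbf{v}_h^2.
\]
The goal then reduces to bounding $j''(\hat{\mathbf{u}}_h)\mathbf{v}_h^2$ from below by $\tfrac{\kappa}{2}\|\mathbf{v}_h\|^2_{\mathbb{R}^{\ell}}$, which I would do by comparing $j''(\hat{\mathbf{u}}_h)$ to $j''(\bar{\mathbf{u}})$ and invoking \eqref{eq:second_order_condition_equivalent} with the admissible direction $\mathbf{v}_h$.

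The key step is to verify that $\mathbf{v}_h \in \mathbf{C}_{\bar{\mathbf{u}}}^{\tau}$ for $h$ sufficiently small. The sign condition \eqref{eq:sign_cond} holds automatically: as $\bar{\mathbf{u}}_h \in \mathbf{U}_{ad}$, we have $\mathsf{v}_{\mathsf{z},h} = \bar{\mathsf{u}}_{\mathsf{z},h} - \mathsf{a}_{\mathsf{z}} \geq 0$ whenever $\bar{\mathsf{u}}_{\mathsf{z}} = \mathsf{a}_{\mathsf{z}}$, and symmetrically $\mathsf{v}_{\mathsf{z},h} \leq 0$ whenever $\bar{\mathsf{u}}_{\mathsf{z}} = \mathsf{b}_{\mathsf{z}}$. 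It remains to show that $\mathsf{v}_{\mathsf{z},h} = 0$ whenever $|\psi_{\mathsf{z}}| > \tau$, where $\psi_{\mathsf{z}}$ is defined in \eqref{def:deriv_j}. Assuming $\psi_{\mathsf{z}} > \tau > 0$, the projection formula \eqref{eq:proj_formula} forces $\bar{\mathsf{u}}_{\mathsf{z}} = \mathsf{a}_{\mathsf{z}}$ and $\bar{\mathsf{p}}(\mathsf{z}) + \alpha\mathsf{a}_{\mathsf{z}} > \tau$; using the nodal convergence $\bar{\mathsf{p}}_h(\mathsf{z}) \to \bar{\mathsf{p}}(\mathsf{z})$ I would deduce $\bar{\mathsf{p}}_h(\mathsf{z}) + \alpha\mathsf{a}_{\mathsf{z}} > \tfrac{\tau}{2} > 0$ for $h$ small, whence the discrete projection formula \eqref{eq:proj_formula_discrete} yields $\bar{\mathsf{u}}_{\mathsf{z},h} = \mathsf{a}_{\mathsf{z}} = \bar{\mathsf{u}}_{\mathsf{z}}$, i.e., $\mathsf{v}_{\mathsf{z},h} = 0$. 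The case $\psi_{\mathsf{z}} < -\tau$ is symmetric.

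With $\mathbf{v}_h \in \mathbf{C}_{\bar{\mathbf{u}}}^{\tau}$ secured, \eqref{eq:second_order_condition_equivalent} gives $j''(\bar{\mathbf{u}})\mathbf{v}_h^2 \geq \kappa\|\mathbf{v}_h\|^2_{\mathbb{R}^{\ell}}$. Because $j \in C^2$ and $\hat{\mathbf{u}}_h \to \bar{\mathbf{u}}$ (since $\|\mathbf{v}_h\|_{\mathbb{R}^{\ell}} \to 0$ by Theorems \ref{thm:convergence_control} and \ref{thm:convergence_control_2}), continuity of $j''$ furnishes $h_{\Delta} > 0$ with $|(j''(\hat{\mathbf{u}}_h) - j''(\bar{\mathbf{u}}))\mathbf{v}_h^2| \leq \tfrac{\kappa}{2}\|\mathbf{v}_h\|^2_{\mathbb{R}^{\ell}}$ for every $h < h_{\Delta}$. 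Writing $j''(\hat{\mathbf{u}}_h)\mathbf{v}_h^2 = j''(\bar{\mathbf{u}})\mathbf{v}_h^2 + (j''(\hat{\mathbf{u}}_h) - j''(\bar{\mathbf{u}}))\mathbf{v}_h^2$ and adding these two bounds yields $j''(\hat{\mathbf{u}}_h)\mathbf{v}_h^2 \geq \tfrac{\kappa}{2}\|\mathbf{v}_h\|^2_{\mathbb{R}^{\ell}}$, which is exactly \eqref{eq:instrumental_error_estimate}.

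The main obstacle is justifying the nodal convergence $\bar{\mathsf{p}}_h(\mathsf{z}) \to \bar{\mathsf{p}}(\mathsf{z})$ at each Dirac point $\mathsf{z} \in \mathcal{D} \subset \Omega_1$. This is subtle because $\bar{\mathsf{p}}_h$ solves the discrete adjoint equation \eqref{eq:adjoint_discrete_control} built from the discrete state $\bar{\mathsf{y}}_h = \mathcal{S}_h\bar{\mathbf{u}}_h$, whereas $\bar{\mathsf{p}}$ solves \eqref{eq:adj_eq} with $\bar{\mathsf{y}} = \mathcal{S}\bar{\mathbf{u}}$, so a single finite element error estimate does not apply directly. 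I would split the error through the auxiliary discrete adjoint $\tilde{\mathsf{p}}_h \in \mathbb{V}_h$ solving \eqref{eq:adjoint_discrete} with the continuous state $\bar{\mathsf{y}}$: the discretization contribution $\|\bar{\mathsf{p}} - \tilde{\mathsf{p}}_h\|_{L^{\infty}(\Omega_1)}$ is controlled by the local maximum norm estimate of Theorem \ref{thm:error_estimate_adj}, while the consistency contribution $\|\tilde{\mathsf{p}}_h - \bar{\mathsf{p}}_h\|_{L^{\infty}(\Omega_1)}$, which records the replacement of $\bar{\mathsf{y}}$ by $\bar{\mathsf{y}}_h$ in both the coefficient $\partial a/\partial y(\cdot,\cdot)$ and the right hand side, is driven to zero using the state convergence $\bar{\mathsf{y}}_h \to \bar{\mathsf{y}}$ in $L^2(\Omega)$ (Theorem \ref{thm:convergence_state_equation}), the continuity of $\partial a/\partial y$, a discrete stability estimate, and an inverse inequality to promote the resulting energy bound to a local $L^{\infty}(\Omega_1)$ bound.
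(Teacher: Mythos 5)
Your proof is, in its core, identical to the paper's: the paper's Step 1 establishes $\bar{\mathbf{u}}_h-\bar{\mathbf{u}}\in\mathbf{C}_{\bar{\mathbf{u}}}^{\tau}$ exactly as you do (sign condition automatic from $\bar{\mathbf{u}}_h\in\mathbf{U}_{ad}$; vanishing of components with $|\psi_{\mathsf{z}}|>\tau$ by playing the projection formulas \eqref{eq:proj_formula} and \eqref{eq:proj_formula_discrete} against each other, via the discrete quantities $\psi_{\mathsf{z},h}=\bar{\mathsf{p}}_h(\mathsf{z})+\alpha\bar{\mathsf{u}}_{\mathsf{z},h}$ and their convergence to $\psi_{\mathsf{z}}$), and its Step 2 is your mean value argument: set $\mathbf{v}=\bar{\mathbf{u}}_h-\bar{\mathbf{u}}$ in \eqref{eq:second_order_condition_equivalent} and absorb $(j''(\bar{\mathbf{u}})-j''(\hat{\mathbf{u}}))(\bar{\mathbf{u}}_h-\bar{\mathbf{u}})^2$ into $\tfrac{\kappa}{2}\|\bar{\mathbf{u}}_h-\bar{\mathbf{u}}\|^2_{\mathbb{R}^{\ell}}$ by continuity of $j''$.

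The only substantive divergence is your final paragraph. The paper disposes of the delicate point in one line: $\bar{\mathsf{y}}_h\rightarrow\bar{\mathsf{y}}$ in $L^2(\Omega)$ (Theorem \ref{thm:convergence_state_equation}) ``implies'' $\bar{\mathsf{p}}_h\rightarrow\bar{\mathsf{p}}$ in $C(\bar{\Omega})$, with no further justification, so your instinct that this step needs an argument is sound. However, your sketch has a wrinkle: promoting the energy bound for $\tilde{\mathsf{p}}_h-\bar{\mathsf{p}}_h$ to an $L^{\infty}$ bound via an inverse inequality costs a factor $(1+|\log h|)^{1/2}$ (this is precisely the device in the proof of Theorem \ref{thm:auxiliary_error_estimate}), but here the energy bound is controlled by $\|\bar{\mathsf{y}}_h-\bar{\mathsf{y}}\|_{L^2(\Omega)}\lesssim \|\bar{\mathbf{u}}-\bar{\mathbf{u}}_h\|_{\mathbb{R}^{\ell}}+h^{2-d/2}$, whose first term converges \emph{without a rate} at this stage of the analysis (the rate is only derived later, using this very proposition). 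A product of the form $|\log h|^{1/2}\cdot o(1)$ need not vanish, so the consistency contribution does not close as written; in Theorem \ref{thm:auxiliary_error_estimate} the same device works only because both adjoints there are built from the \emph{same} control and an $h^2|\log h|^2$ rate is available to absorb the logarithm. The fix is to avoid the inverse inequality: compare $\bar{\mathsf{p}}_h$ with the \emph{continuous} adjoint associated with the data $\bar{\mathsf{y}}_h$, use $L^{\infty}$-stability of the continuous problem (as in \cite[Theorem 4.2]{MR192177}) together with a maximum-norm finite element error estimate whose constants are uniform in the (uniformly bounded) data. Note also that your appeal to Theorem \ref{thm:error_estimate_adj} implicitly restricts to $d=2$, whereas the proposition is stated for a general convex polytope and only qualitative convergence of $\bar{\mathsf{p}}_h(\mathsf{z})$ is needed, so a rate-free stability argument is both sufficient and more in keeping with the generality claimed.
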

\begin{proof}
We proceed on the basis of two steps.

\emph{Step 1.} We first prove that, for $h>0$ sufficiently small, $\bar{\mathbf{u}}_{h}-\bar{\mathbf{u}}$ belongs to $\mathbf{C}^{\tau}_{\bar{\mathbf{u}}}$ for some $\tau>0$. Since $\bar{\mathbf{u}}_{h}\in\mathbf{U}_{ad}$, it is immediate that $\bar{\mathbf{u}}_{h}-\bar{\mathbf{u}}$ satisfies the sign condition \eqref{eq:sign_cond}. It thus suffices to verify the remaining condition in \eqref{eq:cone_critical_tau}. To accomplish this task, we introduce the discrete variable $\Psi_{h}$ as follows:
\begin{equation*}
\Psi_{h}:= \{ \psi_{\mathsf{z},h} \}_{\mathsf{z}\in\mathcal{D}} \in\mathbb{R}^{\ell},
\qquad
\psi_{\mathsf{z},h}:=\bar{\mathsf{p}}_{h}(\mathsf{z})+\alpha \bar{\mathsf{u}}_{\mathsf{z},h}.
\end{equation*}
Since $\bar{\mathbf{u}}_{h}\rightarrow \bar{\mathbf{u}}$ in $\mathbb{R}^{\ell}$, the results of Theorem \ref{thm:convergence_state_equation} guarantee that $\bar{\mathsf{y}}_h \rightarrow \bar{\mathsf{y}}$ in $L^2(\Omega)$ \EO{as $h \rightarrow 0$},  which in turns implies that $\bar{\mathsf{p}}_h \rightarrow \bar{\mathsf{p}}$ in $C(\bar \Omega)$ \EO{as $h \rightarrow 0$}. We can thus deduce the existence of $h_{\dagger}>0$ such that
\begin{equation}\label{eq:tau_medios}
\|\Psi_{h}-\Psi\|_{\mathbb{R}^{\ell}} < \tau \quad \forall h\leq h_{\dagger},
\end{equation}
where, we recall that, $\Psi_{\mathsf{z}}$ is defined in \eqref{def:deriv_j}. Let $\mathsf{z}\in \mathcal{D}$ be arbitrary but fixed and assume that $\psi_{\mathsf{z}} > \tau > 0$. In view of the projection formula \eqref{eq:proj_formula}, we immediately conclude that $\bar{\mathsf{u}}_{\mathsf{z}}=\mathsf{a}_{\mathsf{z}}$. On the other hand, from \eqref{eq:tau_medios} we can obtain that $\psi_{\mathsf{z},h} > 0$ and thus that $\bar{\mathsf{u}}_{\mathsf{z},h}>-\alpha^{-1}\bar{\mathsf{p}}_{h}(\mathsf{z})$. 
This, on the basis of the projection formula \eqref{eq:proj_formula_discrete}, yields  $\bar{\mathsf{u}}_{\mathsf{z},h}=\mathsf{a}_{\mathsf{z}}$. Consequently, $\bar{\mathsf{u}}_{\mathsf{z}}=\bar{\mathsf{u}}_{\mathsf{z},h}=\mathsf{a}_{\mathsf{z}}$. Similar arguments allow us to obtain that, if $\psi_{\mathsf{z}} < -\tau < 0$, then $\bar{\mathsf{u}}_{\mathsf{z}}=\bar{\mathsf{u}}_{\mathsf{z},h}=\mathsf{b}_{\mathsf{z}}$. Since $\mathsf{z}\in\mathcal{D}$ is arbitrary, we can finally conclude that $\bar{\mathbf{u}}_{h}-\bar{\mathbf{u}}\in\mathbf{C}^{\tau}_{\bar{\mathbf{u}}}$.

\emph{Step 2.} Since $\bar{\mathbf{u}}_{h}-\bar{\mathbf{u}}\in\mathbf{C}^{\tau}_{\bar{\mathbf{u}}}$, with $\mathbf{C}_{\bar{\mathbf{u}}}^\tau$ defined in \eqref{eq:cone_critical_tau}, and $\bar{\mathbf{u}}$ satisfies \eqref{eq:second_order_sufficient_condition}, we are allowed to set $\mathbf{v}=\bar{\mathbf{u}}_{h}-\bar{\mathbf{u}}$ in \eqref{eq:second_order_condition_equivalent} to arrive at
\begin{equation}\label{eq:diff_1}
\kappa\|\bar{\mathbf{u}}_{h}-\bar{\mathbf{u}}\|_{\mathbb{R}^{\ell}}^2 \leq j''(\bar{\mathbf{u}})(\bar{\mathbf{u}}_{h}-\bar{\mathbf{u}})^2.
\end{equation}
On the other hand, in view of the mean value theorem we obtain, for some $\theta_{h} \in (0,1)$, 
\begin{equation*}
\label{eq:mean_value_identity}
(j'(\bar{\mathbf{u}}_{h})-j'(\bar{\mathbf{u}}))(\bar{\mathbf{u}}_{h}-\bar{\mathbf{u}})=j''(\hat{\mathbf{u}})(\bar{\mathbf{u}}_{h}-\bar{\mathbf{u}})^2,
\end{equation*}
where $\hat{\mathbf{u}}=\bar{\mathbf{u}}+\theta_{h}(\bar{\mathbf{u}}_{h}-\bar{\mathbf{u}})$. With \eqref{eq:diff_1} at hand, we can thus arrive at
\begin{align}\label{eq:ineq_u_tilde_bar}
\kappa\|\bar{\mathbf{u}}_{h}-\bar{\mathbf{u}}\|_{\mathbb{R}^{\ell}}^2 
& \leq (j'(\bar{\mathbf{u}}_{h})-j'(\bar{\mathbf{u}}))(\bar{\mathbf{u}}_{h}-\bar{\mathbf{u}}) + (j''(\bar{\mathbf{u}})-j''(\hat{\mathbf{u}}))(\bar{\mathbf{u}}_{h}-\bar{\mathbf{u}})^2.
\end{align}
Invoke the fact that $j''$ is continuous in $\mathbb{R}^{\ell}$, $\theta_{h} \in (0,1)$, and that $\bar{\mathbf{u}}_{h}\rightarrow \bar{\mathbf{u}}$ in $\mathbb{R}^{\ell}$, to deduce the existence of $h_{\ddagger}>0$ such that
\begin{equation*}
\left | (j''(\bar{\mathbf{u}})-j''(\hat{\mathbf{u}}))(\bar{\mathbf{u}}_{h}-\bar{\mathbf{u}})^2  \right |
\leq \frac{\kappa}{2}\|\bar{\mathbf{u}}_{h}-\bar{\mathbf{u}}\|_{\mathbb{R}^{\ell}}^2 \quad \forall h \leq h_{\ddagger}.
\end{equation*}
Replacing this inequality into \eqref{eq:ineq_u_tilde_bar} yields the desired inequality \eqref{eq:instrumental_error_estimate}. This concludes the proof.
\end{proof}

We conclude by presenting the following a priori error estimate for the approximation of an optimal control variable.

\begin{theorem}[\EO{a priori error estimate: $d=2$}]
\EOP{Let the assumptions of Theorem \ref{thm:auxiliary_error_estimate} and Proposition \ref{pro:instrumental_bound} hold. If $\bar{\mathbf{u}} \in\mathbf{U}_{ad}$ is a local minimum of \eqref{eq:min}--\eqref{eq:state_equation} that satisfies \eqref{eq:second_order_sufficient_condition}, then there exists $h_{\star} > 0$ such that
\begin{equation}
\label{eq:a_priori_error_estimate}
\|  \bar{\mathbf{u}} - \bar{\mathbf{u}}_h \|_{\mathbb{R}^{\ell}} \lesssim h^2|\log h|^3
\quad
\forall h < h_{\star},
\end{equation}
with a hidden constant that is independent of $h$.}
\label{thm:error_estimate_control_final}
\end{theorem}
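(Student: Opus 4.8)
The plan is to obtain \eqref{eq:a_priori_error_estimate} by feeding the consistency estimate of Theorem \ref{thm:auxiliary_error_estimate} into the coercivity bound of Proposition \ref{pro:instrumental_bound}, using the first order optimality conditions to annihilate an intermediate term. Since Theorems \ref{thm:convergence_control} and \ref{thm:convergence_control_2} guarantee that $\bar{\mathbf{u}}_h \rightarrow \bar{\mathbf{u}}$ in $\mathbb{R}^{\ell}$ as $h \rightarrow 0$, I may fix $h_{\star}>0$ smaller than both $h_{\Delta}$ from Proposition \ref{pro:instrumental_bound} and the threshold of Theorem \ref{thm:auxiliary_error_estimate}, so that both results apply for every $h<h_{\star}$. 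If $\bar{\mathbf{u}}=\bar{\mathbf{u}}_h$ there is nothing to prove, so I assume $\bar{\mathbf{u}}\neq\bar{\mathbf{u}}_h$.

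First I would start from the instrumental bound \eqref{eq:instrumental_error_estimate} and insert the discrete derivative $j_h'(\bar{\mathbf{u}}_h)$, writing its right-hand side as
\[
\left( j'(\bar{\mathbf{u}}_h) - j_h'(\bar{\mathbf{u}}_h) \right)(\bar{\mathbf{u}}_h - \bar{\mathbf{u}})
+
\left( j_h'(\bar{\mathbf{u}}_h) - j'(\bar{\mathbf{u}}) \right)(\bar{\mathbf{u}}_h - \bar{\mathbf{u}}).
\]
The first term is the genuine discretization error of the reduced gradient, and an application of Theorem \ref{thm:auxiliary_error_estimate} with $\mathbf{u} = \bar{\mathbf{u}}_h$ and $\mathbf{v} = \bar{\mathbf{u}}_h - \bar{\mathbf{u}}$ bounds it by $h^2|\log h|^3 \|\bar{\mathbf{u}}_h - \bar{\mathbf{u}}\|_{\mathbb{R}^{\ell}}$ up to a constant independent of $h$.

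Next I would dispose of the second term using optimality. Setting $\mathbf{u} = \bar{\mathbf{u}}_h$ in the continuous variational inequality \eqref{eq:first_order_optimality_condition} yields $j'(\bar{\mathbf{u}})(\bar{\mathbf{u}}_h - \bar{\mathbf{u}}) \geq 0$, while setting $\mathbf{u}_h = \bar{\mathbf{u}}$ in the discrete variational inequality \eqref{eq:first_order_optimality_condition_discrete} yields $j_h'(\bar{\mathbf{u}}_h)(\bar{\mathbf{u}}_h - \bar{\mathbf{u}}) \leq 0$; both substitutions are admissible because $\bar{\mathbf{u}}, \bar{\mathbf{u}}_h \in \mathbf{U}_{ad}$. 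Subtracting the first inequality from the second shows that $\left( j_h'(\bar{\mathbf{u}}_h) - j'(\bar{\mathbf{u}}) \right)(\bar{\mathbf{u}}_h - \bar{\mathbf{u}}) \leq 0$, so this term may be discarded. Combining the two bounds gives
\[
\tfrac{\kappa}{2} \| \bar{\mathbf{u}} - \bar{\mathbf{u}}_h\|^2_{\mathbb{R}^{\ell}} \lesssim h^2|\log h|^3 \, \|\bar{\mathbf{u}}_h - \bar{\mathbf{u}}\|_{\mathbb{R}^{\ell}},
\]
and dividing by $\|\bar{\mathbf{u}} - \bar{\mathbf{u}}_h\|_{\mathbb{R}^{\ell}}\neq 0$ delivers \eqref{eq:a_priori_error_estimate}.

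The concluding argument is short and entirely standard; the real difficulty has already been absorbed into Theorem \ref{thm:auxiliary_error_estimate}, whose $h^2|\log h|^3$ consistency rate rests on the local maximum-norm estimate of Theorem \ref{thm:error_estimate_adj} and the $L^1(\Omega)$ state estimate of Theorem \ref{thm:L1_error_estimate}. The only points requiring care here are therefore bookkeeping: ensuring $h_{\star}$ is compatible with all the thresholds invoked, keeping the coercivity constant $\kappa$ from \eqref{eq:second_order_condition_equivalent} explicit since it drives the estimate, and noting that the hidden constant inherits the dependencies of Theorem \ref{thm:auxiliary_error_estimate} but remains independent of $h$.
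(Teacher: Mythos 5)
Your argument is correct and coincides with the paper's own proof: the paper likewise adds and subtracts $j_h'(\bar{\mathbf{u}}_h)$ in the instrumental bound \eqref{eq:instrumental_error_estimate}, discards the term $\left( j'(\bar{\mathbf{u}}) - j_h'(\bar{\mathbf{u}}_h) \right)(\bar{\mathbf{u}} - \bar{\mathbf{u}}_h) \leq 0$ via the two variational inequalities \eqref{eq:variational_inequality} and \eqref{eq:first_order_optimality_condition_discrete}, and bounds the remaining consistency term with Theorem \ref{thm:auxiliary_error_estimate} at $\mathbf{u} = \bar{\mathbf{u}}_h$ before dividing by $\| \bar{\mathbf{u}} - \bar{\mathbf{u}}_h \|_{\mathbb{R}^{\ell}}$. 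Your decomposition is the same up to an overall sign convention in the difference direction, so there is nothing substantive to add.
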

\begin{proof}
Adding and subtracting the term $j_h'(\bar{\mathbf{u}}_h)(\bar{\mathbf{u}} - \bar{\mathbf{u}}_h)$ in the right hand side of \eqref{eq:instrumental_error_estimate} yields 
\[
\tfrac{\kappa}{2} \|  \bar{\mathbf{u}} - \bar{\mathbf{u}}_h \|^2_{\mathbb{R}^{\ell}} \leq  \left(  j'(\bar{\mathbf{u}}) - j_h'(\bar{\mathbf{u}}_h) \right) (\bar{\mathbf{u}}- \bar{\mathbf{u}}_h)
+
\left( j_h'(\bar{\mathbf{u}}_h) - j'(\bar{\mathbf{u}}_h)  \right) (\bar{\mathbf{u}}- \bar{\mathbf{u}}_h).
\]
We now invoke the continuous and discrete first order optimality conditions, \eqref{eq:variational_inequality} and \eqref{eq:first_order_optimality_condition_discrete}, respectively, to obtain $j'(\bar{\mathbf{u}})(\bar{\mathbf{u}}- \bar{\mathbf{u}}_h) \leq 0$ and $-j_h'(\bar{\mathbf{u}}_h)(\bar{\mathbf{u}}- \bar{\mathbf{u}}_h) \leq 0$. Consequently,
\[
\tfrac{\kappa}{2}  \| \bar{\mathbf{u}} - \bar{\mathbf{u}}_h \|^2_{\mathbb{R}^{\ell}} \leq \left( j_h'(\bar{\mathbf{u}}_h) - j'(\bar{\mathbf{u}}_h)  \right) (\bar{\mathbf{u}}- \bar{\mathbf{u}}_h).
\]
\EO{Utilize the auxiliary error estimate of Theorem \ref{thm:auxiliary_error_estimate} to immediately arrive at the desired bound \eqref{eq:a_priori_error_estimate}.}
\end{proof}

\begin{remark}[optimality]
\EO{The error estimate of Theorem \ref{thm:error_estimate_control_final} is nearly--optimal in terms of approximation (nearly because of the presence of the log-term).} 
\end{remark}

\section*{Acknowledgements}
\EO{We would like to thank the anonymous referees for several comments and suggestions that led to better results and an improved presentation. We would also like to thank Francisco Fuica for the careful reading of the manuscript and Eduardo Casas for fruitful discussions.} 

\section*{Funding}

\EO{Agencia Nacional de Investigaci\'on y Desarrollo (ANID) through FONDECYT 1220156 to E.O.}

\bibliographystyle{IMANUM-BIB}
\bibliography{semilinear_deltas}

\end{document}